\newtheorem{theorem}{Theorem}[section]
\newtheorem{lemma}[theorem]{Lemma}
\newtheorem*{theorem*}{Theorem}
\newtheorem*{lemma*}{Lemma}
\newtheorem*{remark*}{Remark}
\newtheorem*{definition*}{Definition}
\newtheorem*{proposition*}{Proposition}
\newtheorem*{corollary*}{Corollary}
\numberwithin{equation}{section}
\newcommand{\real}{\mathbb{R}}
\let\ced=\c         
\def\qed{\,\unskip\kern 6pt \penalty 500
\raise -2pt\hbox{\vrule \vbox to8pt{\hrule width 6pt
\vfill\hrule}\vrule}\par}
\definecolor{darkblue}{rgb}{0.05, .05, .65}
\definecolor{darkgreen}{rgb}{0.1, .65, .1}
\definecolor{darkred}{rgb}{0.8,0,0}
\newcommand{\beqn}{\begin{equation}}
\newcommand{\eeqn}{\end{equation}}
\newcommand{\bear}{\begin{eqnarray}}
\newcommand{\eear}{\end{eqnarray}}
\newcommand{\bean}{\begin{eqnarray*}}
\newcommand{\eean}{\end{eqnarray*}}
\begin{document}

\title{\huge \bf A porous medium equation with spatially inhomogeneous absorption. Part I: Self-similar solutions}
\author{
\Large Razvan Gabriel Iagar\,\footnote{Departamento de Matem\'{a}tica
Aplicada, Ciencia e Ingenieria de los Materiales y Tecnologia
Electr\'onica, Universidad Rey Juan Carlos, M\'{o}stoles,
28933, Madrid, Spain, \textit{e-mail:} razvan.iagar@urjc.es}
\\[4pt] \Large Diana-Rodica Munteanu\,\footnote{Faculty of Psychology and Educational Sciences, Ovidius University of Constanta, 900527, Constanta, Romania, \textit{e-mail:} diana.rodica.merlusca@gmail.com}\\
}
\date{}
\maketitle

\begin{abstract}
This is the first of a two-parts work on the qualitative properties and large time behavior for the following quasilinear equation involving a spatially inhomogeneous absorption
$$
\partial_tu=\Delta u^m-|x|^{\sigma}u^p,
$$
posed for $(x,t)\in\real^N\times(0,\infty)$, $N\geq1$, and in the range of exponents $1<m<p<\infty$, $\sigma>0$. We give a complete classification of (singular) self-similar solutions of the form
$$
u(x,t)=t^{-\alpha}f(|x|t^{-\beta}), \ \alpha=\frac{\sigma+2}{\sigma(m-1)+2(p-1)}, \ \beta=\frac{p-m}{\sigma(m-1)+2(p-1)},
$$
showing that their form and behavior strongly depends on the critical exponent
$$
p_F(\sigma)=m+\frac{\sigma+2}{N}.
$$
For $p\geq p_F(\sigma)$, we prove that all self-similar solutions have a tail as $\xi\to\infty$ of one of the forms
$$
u(x,t)\sim C|x|^{-(\sigma+2)/(p-m)} \quad {\rm or} \quad u(x,t)\sim \left(\frac{1}{p-1}\right)^{1/(p-1)}|x|^{-\sigma/(p-1)},
$$
while for $m<p<p_F(\sigma)$ we add to the previous the \emph{existence and uniqueness} of a \emph{compactly supported very singular solution}. These solutions will be employed in describing the large time behavior of general solutions in a forthcoming paper.
\end{abstract}

\smallskip

\noindent {\bf AMS Subject Classification 2010:} 35A24, 35B33, 35C06, 35K65, 34D05.

\smallskip

\noindent {\bf Keywords and phrases:} porous medium equation, spatially inhomogeneous absorption, self-similar solutions, very singular solutions, critical exponents.

\section{Introduction and main results}

The aim of this paper (and the forthcoming companion work Part II) is to establish qualitative properties and a study of the large time behavior for the following quasilinear absorption-diffusion equation
\begin{equation}\label{eq1}
\partial_tu=\Delta u^m-|x|^{\sigma}u^p, \qquad (x,t)\in\real^N\times(0,\infty),
\end{equation}
posed for $1<m<p<\infty$ and $\sigma>0$. This is a generalization, involving a spatially inhomogeneous weight, of an absorption-diffusion equation that is well understood by now, namely
\begin{equation}\label{eq2}
\partial_tu=\Delta u^m-u^p, \qquad (x,t)\in\real^N\times(0,\infty).
\end{equation}
We stress here that, while the current work, devoted to the classification of self-similar solutions, which requires a structure that is invariant to a specific rescaling, only deals with \eqref{eq1}, in the companion work dealing with the large time behavior of solutions, we may also consider more general weights $\varrho(x)$ with suitable properties instead of pure powers $|x|^{\sigma}$.

The most interesting feature of Eq. \eqref{eq1} is the competition for governing the dynamics of it, between the effects of the terms in the right hand side. Indeed, on the one hand, the diffusion of porous medium type expands the support of any compactly supported solution while conserving its $L^1$ norm, while, on the other hand, the spatially inhomogeneous absorption term implies a decrease of the $L^1$ norm of a solution to Eq. \eqref{eq1}, influenced strongly by regions where $|x|$ is large. This competition gives rise to a number of ranges, limited by critical exponents, in which the typical behavior of a solution (usually given by the particular cases of self-similar solutions) varies strongly. This is how different mathematical phenomena, as described below, occur.

Our range of interest in the present paper, that is, $p>m>1$, originated a lot of interesting research in the final part of the past century in the quest to understand the mathematical analysis of the solutions to Eq. \eqref{eq2}. One important critical exponent $p_F(0)=m+2/N$, known as the \emph{Fujita exponent} and identified originally in reaction-diffusion problems starting from the seminal work by Fujita \cite{Fu66}, plays a significant role also for our absorption-diffusion problem. Indeed, it has been proved that, either the porous medium equation governs the dynamics of Eq. \eqref{eq2} as $t\to\infty$ if $p\geq p_F(0)$, or there is a balance between the two terms leading to new asymptotic profiles in the form of very singular self-similar solutions, in the range $m<p<p_F(0)$. In particular, the problem of identifying self-similar solutions in the latter range has been considered in papers such as \cite{BPT86, PT86, KP86, KV88, KPV89, Le97}, where a number of different \emph{very singular solutions} have been deduced, depending on their initial trace as $t\to0$. By \emph{very singular solution}, we understand a solution (in weak or classical sense) to a partial differential equation (in our case \eqref{eq1} or \eqref{eq2}) having the following properties:
\begin{equation}\label{VSS1}
\lim\limits_{t\to0}\sup\limits_{|x|>\epsilon}u(x,t)=0,
\end{equation}
and
\begin{equation}\label{VSS2}
\lim\limits_{t\to0}\int_{|x|<\epsilon}u(x,t)\,dx=+\infty,
\end{equation}
for any $\epsilon>0$. It is by now understood that such solutions appear when a balance between the two terms in competition in the equation is achieved. Moreover, this class of solutions have had a great importance in understanding how general solutions to Eq. \eqref{eq2} behave, as considered in works such as \cite{KP86, KU87, Kwak98, MPV91} and references therein, where the large time behavior of solutions to Eq. \eqref{eq2} as $t\to\infty$ is established. It has been thus shown in the range $m<p<p_F(0)$ that the asymptotic profiles are self-similar solutions, which may differ from compactly supported to a specific tail at infinity depending on the decay of the initial condition $u_0(x)$ as $|x|\to\infty$. Later on, very singular solutions in self-similar form have been established, emphasizing their importance for the large time behavior of integrable solutions, for a number of different equations involving a fast diffusion (that is, \eqref{eq2} with $m<1$) in \cite{PZ91, Le96, Kwak98b, CQW05}, a $p$-Laplacian term in, for example, \cite{PW88, KV92, CQW03, CQW03b, CQW07} and an absorption in the form of a gradient term, see for example \cite{BL01, BKL04, Shi04, IL13, IL14} and references therein.

Motivated by problems in mathematical biology as following from \cite{GMC77, N80}, equations mixing a porous medium diffusion and an absorption involving a spatially inhomogeneous weight have been considered at first by Peletier and Tesei in \cite{PT85, PT86b}, where the authors study, in dimension $N=1$, equations of the form
$$
\partial_tu=(u^m)_{xx}-a(x)u^p,
$$
under some conditions on the weight $a(x)$. The latter mentioned works are devoted to the threshold between positivity (that is, expansion of the support of a compactly supported data covering $\real^N$ as $t\to\infty$) and localization, which depends on whether $p>m$ or $p<m$. Later, Belaud and Shishkov \cite{Belaud01, BeSh07, BeSh22} studied the phenomenon of finite time extinction for absorption-diffusion equations involving more general weights than $|x|^{\sigma}$ but for the so-called range of strong absorption, that is, $0<p<1$. Still considering Eq. \eqref{eq1} with $0<p<1$, the large time behavior of general solutions towards some self-similar solutions, and conditions for finite time extinction, have been established in the recent papers \cite{IL23, ILS24}. Thus, in the present paper we extend the (well established for Eq. \eqref{eq2}) classification of solutions in self-similar form (either very singular or not) to Eq. \eqref{eq1}, their application to large time behavior being left for a forthcoming work.

\medskip

\noindent \textbf{Main results.} Our main object of study will be the radially symmetric self-similar solutions to Eq. \eqref{eq1} in the following form:
\begin{equation}\label{SSS}
u(x,t)=t^{-\alpha}f(\xi), \qquad \xi=|x|t^{-\beta}, \qquad (x,t)\in\real^N\times(0,\infty).
\end{equation}
Inserting the ansatz \eqref{SSS} into Eq. \eqref{eq1} and taking into account that $p>m>1$, we obtain by direct calculations that
\begin{equation}\label{SSexp}
\alpha=\frac{\sigma+2}{L}>0, \qquad \beta=\frac{p-m}{L}>0, \qquad L=\sigma(m-1)+2(p-1)
\end{equation}
are the self-similarity exponents, while the profiles $f(\xi)$ are solutions to the differential equation
\begin{equation}\label{SSODE}
(f^m)''(\xi)+\frac{N-1}{\xi}(f^m)'(\xi)+\alpha f(\xi)+\beta\xi f'(\xi)-\xi^{\sigma}f^p(\xi)=0,
\end{equation}
together with the initial conditions (the second one being imposed by the radial symmetry)
\begin{equation}\label{IC}
f(0)=A>0, \qquad f'(0)=0.
\end{equation}
Letting $F=f^m$ and applying the Cauchy-Lipschitz theorem to the Cauchy problem \eqref{SSODE}-\eqref{IC}, we obtain that, for any $A>0$, there is a unique solution $f(\cdot;A)$ which is positive in a maximal interval (of positivity) $[0,\xi_{\rm max}(A))$ with the property that
$$
F(\cdot;A)=f^m(\cdot;A)\in C^2([0,\xi_{\rm max}(A)),
$$
where either $\xi_{\rm max}(A)=\infty$ or $\xi_{\rm max}(A)<\infty$ and, in the latter case, we say that we have a \emph{compactly supported solution} if moreover $(f^m)'(\xi_{\rm max}(A);A)=0$. With this discussion in mind, we are in a position to state our results concerning the classification of the self-similar solutions $f(\cdot;A)$ according to their behavior as $\xi\to\xi_{\rm max}(A)$. First of all, we can establish a general result valid for any $p>m$.
\begin{theorem}\label{th.gen}
Let $p>m>1$, $N\geq1$. Then, there exists $A^*\in(0,\infty)$ such that

(a) For any $A\in(0,A^*]$, the solution $f(\cdot;A)$ to the Cauchy problem \eqref{SSODE}-\eqref{IC} is decreasing on its positivity region. Moreover, the limiting solution $f(\cdot;A^*)$ satisfies $f(\xi;A^*)>0$ for any $\xi>0$ and has the precise behavior at infinity
\begin{equation}\label{decay.Pg}
\lim\limits_{\xi\to\infty}\xi^{\sigma/(p-1)}f(\xi;A^*)=\left(\frac{1}{p-1}\right)^{1/(p-1)}.
\end{equation}

(b) For any $A\in(A^*,\infty)$, the solution $f(\cdot;A)$ to the Cauchy problem \eqref{SSODE}-\eqref{IC} has a unique positive minimum point $\xi_0(A)$ such that: $f(\xi_0(A);A)>0$, $f(\cdot;A)$ is decreasing for $\xi\in(0,\xi_0(A))$ and increasing for $\xi\in(\xi_0(A),\infty)$.
\end{theorem}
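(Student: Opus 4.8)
The natural strategy is a shooting argument in the parameter $A=f(0)$, organized around the monotonicity and the (non)vanishing of $f(\cdot;A)$. I would begin by analyzing the local behavior near $\xi=0$: from \eqref{SSODE} with $f'(0)=0$ one computes $(f^m)''(0)$ explicitly in terms of $A$, obtaining $N(f^m)''(0)=-\alpha A + 0\cdot(\dots)=-\alpha A<0$ (the weight term $\xi^\sigma f^p$ vanishes at $\xi=0$ since $\sigma>0$), so every solution starts out strictly decreasing. Hence for each $A>0$ there is a first point where one of the following happens: $f$ reaches a local minimum with $f>0$, or $f$ reaches zero (possibly with zero derivative, i.e. compact support). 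Define
\[
\mathcal{A}_- = \{A>0 : f(\cdot;A)\text{ is decreasing on all of }[0,\xi_{\max}(A))\},
\]
and let $A^* = \sup \mathcal{A}_-$. The whole theorem reduces to showing that $\mathcal{A}_-$ is a nonempty interval of the form $(0,A^*]$ with $A^*\in(0,\infty)$, that its complement $(A^*,\infty)$ gives precisely the alternative in (b), and that the borderline profile $f(\cdot;A^*)$ is globally positive with the stated decay \eqref{decay.Pg}.

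The core analytic tool will be a monotonicity/continuity-in-$A$ argument combined with a phase-plane or energy analysis of \eqref{SSODE}. For the \emph{openness} of the ``minimum-point'' set $(A^*,\infty)$: if $f(\cdot;A)$ has an interior local minimum at $\xi_0(A)$ with $f(\xi_0(A);A)>0$, then continuous dependence of solutions of the ODE on initial data (via $F=f^m$, using Cauchy--Lipschitz as in the excerpt) shows nearby $A'$ also produce an interior minimum. One must then show the minimum is \emph{unique} and that $f$ is increasing afterwards forever; the right device is to introduce the function $\xi^\sigma f^p - \alpha f - \beta\xi f'$ (the ``curvature'' $-(f^m)'' - \frac{N-1}{\xi}(f^m)'$) and show that once $f'>0$ and the weight term dominates, $(f^m)'$ cannot return to zero — this uses that $\xi^\sigma$ is increasing and $p>m$, so the absorption keeps pushing $f$ up. I expect this ``no second critical point'' step to require a careful differential-inequality bootstrap. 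Symmetrically, for small $A$ one shows $f$ stays decreasing and hits zero (or tends to zero), giving $(0,\cdot)\subset\mathcal{A}_-$; a comparison with a subsolution built from the pure-absorption ODE $\alpha f = \xi^\sigma f^p$, whose solution is exactly the profile $\xi^{-\sigma/(p-1)}(1/(p-1))^{1/(p-1)}$ appearing in \eqref{decay.Pg}, is the key comparison object.

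Finally, for the borderline value $A^*$: by definition $f(\cdot;A^*)$ is a limit of decreasing profiles, hence decreasing, so it is either compactly supported or globally positive and decreasing to a limit $\ell\ge0$. If $\ell>0$, plugging into \eqref{SSODE} and sending $\xi\to\infty$ forces $\ell^{p-1}\xi^\sigma\to\alpha$, impossible; if $f(\cdot;A^*)$ were compactly supported with support $[0,\xi_{\max}]$, a perturbation argument (showing that compact support is an open condition in $A$, or directly that $f(\cdot;A)$ for $A$ slightly above $A^*$ would still have to vanish, contradicting $A>A^*$) rules this out — this is the delicate place where one must exploit the strict monotonicity of the shooting in $A$. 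Hence $f(\cdot;A^*)>0$ everywhere and $f\to0$ at infinity; a refined asymptotic analysis of \eqref{SSODE} — balancing $\alpha f$ against $\xi^\sigma f^p$ and checking that the diffusion terms $(f^m)''+\frac{N-1}{\xi}(f^m)'$ and the drift $\beta\xi f'$ are lower order — yields the sharp rate \eqref{decay.Pg}. The main obstacle, I expect, is exactly the dichotomy at $A^*$: proving that the limiting profile is neither compactly supported nor bounded below away from zero, which is where the interplay between the porous-medium degeneracy and the growing weight $|x|^\sigma$ is most subtle and where the hypothesis $p>m$ (rather than merely $p\ge1$) is genuinely used.
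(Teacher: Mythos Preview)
Your strategy is in the spirit of Leoni's direct shooting argument, while the paper instead transforms \eqref{SSODE} into a three-dimensional autonomous dynamical system and shoots along a one-parameter family of unstable-manifold trajectories $l_C$ from a critical point $Q_1$. The paper's approach makes the classification almost geometric: the ``increasing'' behavior (b) corresponds to trajectories entering a positively invariant region $\{y>0,\, z>x\}$ (Lemma~\ref{lem.invar}), the decay \eqref{decay.P1} corresponds to a non-hyperbolic attractor $P_1$, and the decay \eqref{decay.Pg} corresponds to a degenerate critical point $P_{\gamma_0}$. In particular, the uniqueness of the minimum in (b) and the ``increasing forever'' property fall out of the invariance of that region with almost no computation, whereas your proposed differential-inequality bootstrap for this step is left genuinely unspecified.

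There is, however, a real gap in your treatment of $A^*$. You assume that once $f(\cdot;A^*)$ is globally positive and tends to zero, balancing $\alpha f$ against $\xi^\sigma f^p$ forces \eqref{decay.Pg}. But there is a \emph{second} admissible decay, $f(\xi)\sim C\xi^{-(\sigma+2)/(p-m)}$ (see \eqref{decay.P1}), which is strictly faster since $L>0$; for such profiles the diffusion and drift terms are \emph{not} lower order and your balance does not apply. The paper's mechanism for excluding this at $A^*$ is topological: the decay \eqref{decay.P1} is governed by an attracting critical point, so the corresponding set of parameters is \emph{open}; likewise the ``increasing'' set $\mathcal{A}$ is open; hence the boundary value $A^*$ must land on the only remaining alternative, namely $P_{\gamma_0}$ giving \eqref{decay.Pg}. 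Your sketch has no analogue of this openness argument. Relatedly, your claim that ``compact support is an open condition in $A$'' is false for the \emph{good} interface with $(f^m)'(\xi_{\max})=0$: in the paper's language this is the saddle $P_2$, and in fact for $m<p<p_F(\sigma)$ there \emph{is} a compactly supported profile at some $A_*<A^*$ which is isolated, not open. So your perturbation argument to exclude compact support at $A^*$ does not work as written; you would need instead to show directly that the good-interface condition cannot occur at the supremum of $\mathcal{A}_-$, which requires distinguishing it from the decay \eqref{decay.P1} --- precisely the missing piece above.
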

Such an exhaustive classification has been given for the non-weighted case $\sigma=0$ by Leoni in \cite{Le97}. However, when $\sigma>0$ things are more involved, since for $\sigma=0$, there exists a constant solution to \eqref{SSODE}-\eqref{IC}, namely
\begin{equation}\label{const.sol}
f(\xi)=\left(\frac{1}{p-1}\right)^{1/(p-1)},
\end{equation}
which both corresponds to $f(\cdot;A^*)$ and to the local behavior \eqref{decay.Pg} (which for $\sigma=0$ is no longer a decay, but a constant behavior). In our case, such an explicit limiting solution ceases to exist for Eq. \eqref{eq1} and we have thus to prove the existence of a non-explicit one by different techniques. Moreover, while for $\sigma=0$, profiles $f(\cdot;A)$ with $A>A^*$ are increasing always (as pointed out in \cite{Le97}), in our case they start in a decreasing way in a right neighborhood of the origin, reach a positive minimum and then become increasing forever.

We are now left with the classification of the profiles $f(\cdot;A)$ with $A\in(0,A^*)$, according to their behavior as $\xi\to\xi_{\rm max}(A)$ (either finite or infinite). As already noticed in the case of Eq. \eqref{eq2}, this analysis will strongly depend on the \emph{Fujita-type exponent}
\begin{equation}\label{pFs}
p_F(\sigma)=m+\frac{\sigma+2}{N},
\end{equation}
which has been first analyzed in connection with reaction-diffusion problems and the phenomenon of finite time blow-up of solutions in, for example, \cite{Qi98, Su02}. The richest case is when $m<p<p_F(\sigma)$, when the classification is given in the next result.
\begin{theorem}\label{th.small}
Let $m>1$, $\sigma>0$ and $p\in(m,p_F(\sigma))$, where $p_F(\sigma)$ is defined in \eqref{pFs}.

\medskip

(a) There exists a unique decreasing, compactly supported self-similar profile solving the problem \eqref{SSODE}-\eqref{IC}, that is, there exists a unique $A_*\in(0,A^*)$ such that $\xi_{\rm max}(A_*)<\infty$, $f(\cdot;A_*)$ is decreasing on $[0,\xi_{\rm max}(A_*)]$ and $(f^m)'(\xi_{\rm max}(A_*);A_*)=0$.

\medskip

(b) There exists $A_0\in(A_*,A^*]$ such that for any $A\in(A_*,A_0)$ we have $f(\xi;A)>0$ for any $\xi\in[0,\infty)$ and it has the decay
\begin{equation}\label{decay.P1}
f(\xi;A)\sim C\xi^{-(\sigma+2)/(p-m)}, \qquad {\rm as} \ \xi\to\infty,
\end{equation}
for some constant $C>0$ (that might depend on $A$). For any $A\in[A_0,A^*]$, the profile $f(\cdot;A)$ presents the decay \eqref{decay.Pg} as $\xi\to\infty$.
\end{theorem}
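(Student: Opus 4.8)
The plan is to argue by a shooting method in the parameter $A=f(0)$, restricted to the interval $A\in(0,A^{*})$ singled out in Theorem \ref{th.gen}, on which every profile $f(\cdot;A)$ is decreasing on its positivity set. It is convenient to recast \eqref{SSODE} as an autonomous system by setting $\xi=e^{s}$ and passing to dimensionless unknowns such as $X=\xi f'(\xi)/(m\,f(\xi))$, together with quantities comparing the magnitudes of the diffusion term $(f^{m})''$, the absorption term $\xi^{\sigma}f^{p}$ and the transport term $\beta\xi f'$. In these variables the three behaviors in the statement correspond to the orbit issued from the point encoding $f(0)=A$, $f'(0)=0$ reaching three distinct critical points: an ``interface'' point for a compactly supported profile with $(f^{m})'(\xi_{\rm max})=0$, and two ``decay'' points for the algebraic tails, which the local analysis tells apart since \eqref{decay.Pg} arises from the rigid far-field balance $\alpha f+\beta\xi f'-\xi^{\sigma}f^{p}\approx0$, with the forced leading constant $(1/(p-1))^{1/(p-1)}$, whereas \eqref{decay.P1} arises from the looser balance $\alpha f+\beta\xi f'\approx0$ and carries a free multiplicative constant $C$.

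For part (a), consider the ``overshooting'' set $\mathcal N=\{A\in(0,A^{*}):\xi_{\rm max}(A)<\infty\ \text{and}\ (f^{m})'(\xi_{\rm max}(A);A)<0\}$, which is open by continuous dependence in \eqref{SSODE}-\eqref{IC} together with the transversality $(f^{m})'<0$ at the first zero. The crucial small-$A$ estimate is that $\mathcal N$ contains a right neighborhood of $0$: for $A$ small, $f(\cdot;A)$ remains small, so the absorption integral is negligible against the reaction integral in the identity obtained by multiplying \eqref{SSODE} by $\xi^{N-1}$ and integrating, and since $\alpha>N\beta$ (equivalent to $p<p_{F}(\sigma)$ through \eqref{SSexp}, \eqref{pFs}) this forces $(f^{m})'<0$ throughout the positivity set and hence at the first zero. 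Set $A_{*}=\sup\{A:(0,A)\subset\mathcal N\}$. Then $A_{*}<A^{*}$, since by a perturbation argument the profiles $f(\cdot;A)$ for $A$ in a left neighborhood of $A^{*}$ stay positive on every prescribed interval, and in fact globally; and $A_{*}\notin\mathcal N$. The profile $f(\cdot;A_{*})$ is the desired one: as a limit of overshooting profiles it cannot stay positive for all $\xi$, provided their supports do not escape to infinity---which is ensured by a uniform upper bound for $\xi_{\rm max}(A)$ with $A$ near $A_{*}$, coming from an integral/energy estimate on \eqref{SSODE}---so $\xi_{\rm max}(A_{*})<\infty$; and it cannot overshoot, hence $(f^{m})'(\xi_{\rm max}(A_{*});A_{*})=0$.

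Uniqueness of the compactly supported profile is the step I expect to be the main obstacle. Values $A<A_{*}$ are excluded because they lie in $\mathcal N$; what must be ruled out is a second compactly supported profile with $A\in(A_{*},A^{*})$, and this requires global information beyond continuous dependence. One argues either that $A\mapsto\xi_{\rm max}(A)$ is strictly monotone on the set of compactly supported profiles, or that two such profiles would produce two orbits of the autonomous system both joining the ``center'' point to the ``interface'' point, which is excluded by exhibiting a Lyapunov-type function strictly monotone along the flow on the relevant invariant region (equivalently, by an intersection-comparison/sliding argument). The same circle of ideas yields, as a by-product, that $\mathcal N=(0,A_{*})$ and that every $A\in(A_{*},A^{*})$ gives a decreasing, globally positive profile.

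For part (b), once uniqueness in (a) is available, each $A\in(A_{*},A^{*})$ yields a decreasing, everywhere positive profile, which must therefore display one of the two algebraic tails \eqref{decay.P1} or \eqref{decay.Pg}. Let $\mathcal F$ be the set of $A\in(A_{*},A^{*}]$ for which \eqref{decay.P1} holds; one shows $\mathcal F$ is open and contains a right neighborhood of $A_{*}$ (a profile just above the compactly supported one skims near zero, so that $\xi^{\sigma}f^{p-1}\to0$ and absorption never overtakes transport, which produces the loose balance and hence \eqref{decay.P1}, the free constant $C$ making this behavior robust), while $A^{*}\notin\mathcal F$ by Theorem \ref{th.gen}; a monotonicity argument in $A$ of the type used for uniqueness shows $\mathcal F$ is an interval. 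Setting $A_{0}=\sup\mathcal F$ then gives $A_{*}<A_{0}\le A^{*}$ and $\mathcal F=(A_{*},A_{0})$, while on the complement $[A_{0},A^{*}]$ the slow tail \eqref{decay.Pg} holds; the endpoint $A_{0}$ belongs to the slow-tail set because the forced leading constant $(1/(p-1))^{1/(p-1)}$ is the only possible limit of the leading coefficients of $f(\cdot;A)$ as $A\downarrow A_{0}$. The remaining technical points---the non-escape of $\xi_{\rm max}$ and the interval character of $\mathcal N$ and $\mathcal F$---are handled via the phase-plane picture together with comparison arguments, and, with the uniqueness step, form the bulk of the work.
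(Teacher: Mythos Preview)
Your outline follows the same shooting philosophy as the paper, and your identification of uniqueness as the main obstacle, to be handled by a sliding/comparison argument, matches exactly what the paper does (Lemma~4.3 plus the supersolution construction in Lemma~5.1 and a separation-of-supports argument). Where your proposal has a genuine gap is in the \emph{existence} of the compactly supported profile.

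You set $A_*=\sup\{A:(0,A)\subset\mathcal N\}$ and then assert that $f(\cdot;A_*)$ is compactly supported because ``the supports do not escape to infinity --- which is ensured by a uniform upper bound for $\xi_{\rm max}(A)$ \dots\ coming from an integral/energy estimate''. This step is not justified, and in fact fails in the scenario you have not excluded: nothing in your argument prevents $A_*$ from landing in the (closed) set of profiles with the slow tail \eqref{decay.Pg}. In that case $f(\cdot;A_*)$ is globally positive, continuous dependence forces $\xi_{\rm max}(A)\to\infty$ as $A\uparrow A_*$, and no energy identity on \eqref{SSODE} will produce the claimed uniform bound. Your ``perturbation from $A^*$'' remark shows only that \emph{some} $A<A^*$ gives a globally positive profile, which is consistent with that profile having the slow tail; it does not separate the overshooting set from the slow-tail set by a compactly supported profile.

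The paper closes this gap not by an energy bound but by an additional topological step in the phase space: it first shows (by a ``filter-plane'' argument using the hyperplane $\{y=-(\sigma+2)/(p-m)\}$ in the system \eqref{PSsyst2}, see Lemma~5.1 of Section~\ref{sec.small}) that the set of parameters giving the fast tail \eqref{decay.P1} is \emph{nonempty}. Since that set and the overshooting set are both open and nonempty inside the ``decreasing'' range, their complement is nonempty, and any parameter there is shown to connect to $P_2$, i.e.\ to give a compactly supported profile. In your language, you must first prove $\mathcal V=\{A:\ \text{decay \eqref{decay.P1}}\}\neq\emptyset$; once you have that, the supremum of the overshooting interval is forced (by openness of $\mathcal N$ and of $\mathcal V$) to lie in neither, hence to be compactly supported, and your bound on $\xi_{\rm max}$ becomes a consequence rather than an assumption. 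Your part~(b) sketch is then essentially correct, but note that the claim ``$\mathcal F$ contains a right neighborhood of $A_*$'' is again exactly the statement $\mathcal V\neq\emptyset$ near $A_*$, and requires the same missing ingredient.
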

Let us stress here that the solution in the form \eqref{SSS} with self-similar profile as in Theorem \ref{th.small}, part (a), is a \emph{very singular solution}. Indeed, for any $x\in\real^N$, $|x|>0$, we have $|x|t^{-\beta}\to\infty$ as $t\to0$, and thus condition \eqref{VSS1} is fulfilled, owing to the compact support of the profile $f$, while condition \eqref{VSS2} follows from a direct calculation, based on the fact that
$$
N\beta-\alpha=\frac{N(p-m)-(\sigma+2)}{L}<0.
$$
In the meantime, the self-similar solutions \eqref{SSS} with profiles decaying as in \eqref{decay.P1} have as initial trace
\begin{equation}\label{init.trace}
\lim\limits_{t\to0}u(x,t)=C|x|^{-(\sigma+2)/(p-m)}, \qquad {\rm for} \ |x|>0,
\end{equation}
while condition \eqref{VSS2} is still satisfied. Despite the fact that these solutions have a tail instead of a compact support, their importance for the large time behavior of general solutions for the homogeneous case $\sigma=0$ given as Eq. \eqref{eq2} has been emphasized, for example, in \cite{Kwak98} (see also \cite{KP86}). Let us also mention here, in order to complete the panorama, that for $A\in(0,A_*)$, the profile $f(\cdot;A)$ does not give rise to a non-negative self-similar solution, as it changes sign at $\xi_{{\rm max}}(A)\in(0,\infty)$ in the sense that $f(\xi_{{\rm max}}(A);A)=0$ but $(f^m)'(\xi_{{\rm max}}(A);A)<0$.

Things are much simpler in the range $p\geq p_F(\sigma)$.
\begin{theorem}\label{th.large}
Let $m>1$, $\sigma>0$ and $p\geq p_F(\sigma)$. Then, for any $A\in(0,A^*)$ we have $f(\xi;A)>0$ for any $\xi\in(0,\infty)$. Moreover, there exists $A_0\in(0,A^*]$ such that for any $A\in(0,A_0)$, $f(\cdot;A)$ has the decay \eqref{decay.P1} as $\xi\to\infty$, while for any $A\in[A_0,A^*]$, the profile $f(\cdot;A)$ presents the decay \eqref{decay.Pg} as $\xi\to\infty$.
\end{theorem}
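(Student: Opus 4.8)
The plan is to combine the general structure from Theorem \ref{th.gen} with a shooting argument in the parameter $A\in(0,A^*)$, exactly as in the range $p<p_F(\sigma)$ but with one crucial simplification: the compactly supported (very singular) solution disappears. Concretely, by Theorem \ref{th.gen}(a) every profile $f(\cdot;A)$ with $A\in(0,A^*)$ is decreasing on its positivity region $[0,\xi_{\rm max}(A))$. The first step is to rule out finite-time extinction, i.e. to show $\xi_{\rm max}(A)=\infty$ for every such $A$. To this end I would study the two-dimensional autonomous-type system obtained from \eqref{SSODE} after the usual phase-plane change of variables (setting $X=\xi (f^m)'/f^m$, $Y=\xi^{\sigma}f^{p-m}$, or the standard Leoni-type variables adapted to $\sigma>0$), and analyze the critical point corresponding to the decay \eqref{decay.P1}. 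The key point is that the condition $p\geq p_F(\sigma)$ is equivalent to $N\beta-\alpha\leq 0$, i.e. $N(p-m)\leq\sigma+2$, which forces the relevant critical point (whose $f$-component decays like $\xi^{-(\sigma+2)/(p-m)}$) to have an integrable profile and, more importantly, makes the vector field point so that an orbit cannot reach $f=0$ at finite $\xi$ while staying decreasing: a trajectory that would hit zero at finite $\xi_{\rm max}$ with $(f^m)'(\xi_{\rm max})<0$ would, after integrating \eqref{SSODE} against a suitable weight and using $N\beta-\alpha\le0$, contradict the sign of the mass-type quantity $\frac{d}{d\xi}\bigl(\xi^{N-1}(f^m)'+\beta\xi^N f\bigr)=(\alpha-N\beta)\xi^{N-1}f+\xi^{N-1+\sigma}f^p\ge 0$, which shows $\xi^{N-1}(f^m)'+\beta\xi^N f$ is nondecreasing, hence cannot become negative enough to produce a crossing with negative slope. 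This is the analogue of why the compactly supported profile ceases to exist for $p\ge p_F(\sigma)$.

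Having established that all profiles with $A\in(0,A^*)$ are global and positive, the second step is to describe their behavior as $\xi\to\infty$. A bounded decreasing positive solution of \eqref{SSODE} must converge to a limit $\ell\ge 0$; plugging $f\to\ell$ into the equation (balancing $\beta\xi f'$ against $\xi^\sigma f^p$ forces $\ell=0$), so every such profile decays to zero. A standard asymptotic analysis of \eqref{SSODE} as $\xi\to\infty$ then shows there are exactly two possible decay rates: the "slow" algebraic tail \eqref{decay.Pg}, governed by the balance $\alpha f+\beta\xi f'\sim\xi^\sigma f^p$ between the self-similar terms and the absorption (independent of $m,N$), and the "fast" tail \eqref{decay.P1}, governed by the balance $\Delta u^m\sim \xi^\sigma f^p$ between diffusion and absorption. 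One verifies by linearization around each of these two decay profiles that \eqref{decay.P1} is the generic (stable) behavior and \eqref{decay.Pg} is the exceptional (unstable) one, so the set of $A$ giving the slow tail \eqref{decay.Pg} is closed in $(0,A^*]$ (and contains $A^*$ by Theorem \ref{th.gen}(a)), while the set giving the fast tail \eqref{decay.P1} is open.

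The third step is the connectedness/monotonicity argument that yields the threshold $A_0$. Define $A_0=\inf\{A\in(0,A^*]: f(\cdot;A)\text{ has the tail }\eqref{decay.Pg}\}$; this set is nonempty ($A^*$ belongs to it) and closed, so the infimum is attained and $f(\cdot;A_0)$ has the tail \eqref{decay.Pg}. For $A<A_0$ the profile must then have the tail \eqref{decay.P1}, provided one shows that the tail behavior is monotone in $A$ — i.e. that once a profile "switches" from fast to slow decay it cannot switch back. This monotonicity follows from a comparison argument on the profiles: using that $A\mapsto f(\cdot;A)$ is continuous and, by an ODE comparison (the profiles are ordered near the origin and, via \eqref{SSODE} rewritten for $F=f^m$, intersections can be controlled), the "slow-decay set" is an up-set in $A$. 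The main obstacle, and the step deserving the most care, is precisely this ordering/no-return argument together with the sharp asymptotic classification into exactly the two tails \eqref{decay.Pg}–\eqref{decay.P1}: one must rule out oscillatory or intermediate decay rates, which is where the hypothesis $\sigma>0$ (making $\xi^\sigma$ genuinely unbounded) and $p\ge p_F(\sigma)$ (controlling the dimensional balance) are used. The remaining assertions are then immediate: positivity and globality for all $A\in(0,A^*)$ from Step 1, and the dichotomy of tails with threshold $A_0\in(0,A^*]$ from Steps 2–3.
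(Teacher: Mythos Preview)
Your overall strategy is quite different from the paper's, which transforms \eqref{SSODE} into the three-dimensional autonomous systems \eqref{PSsyst1}--\eqref{PSsyst2} and runs a shooting argument on the one-parameter family $(l_C)_{C\ge0}$ of unstable trajectories from $Q_1$, classifying their $\omega$-limits among the critical points $P_1$, $P_2$, $P_{\gamma_0}$. In particular Theorems \ref{th.gen} and \ref{th.large} are proved \emph{simultaneously} there, so you cannot simply invoke Theorem \ref{th.gen}(a) as a black box. Your integral-identity idea in Step~1 is in fact the right elementary substitute for the paper's barrier Lemma~\ref{lem.barlarge}: the quantity $H(\xi)=\xi^{N-1}(f^m)'+\beta\xi^N f$ satisfies $H'(\xi)=(N\beta-\alpha)\xi^{N-1}f+\xi^{N-1+\sigma}f^p$, and since $p\ge p_F(\sigma)$ is equivalent to $N\beta-\alpha\ge0$ (you wrote both signs backwards; $p\ge p_F(\sigma)$ means $N(p-m)\ge\sigma+2$), one gets $H\ge0$, which does rule out both an interface with $(f^m)'(\xi_0)=0$ and a crossing with $(f^m)'(\xi_0)<0$. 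So Step~1, once the signs are corrected, works and is more direct than the paper's dynamical-systems route.

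The genuine gaps are in Steps~2--3. First, you never show that the fast-tail set is nonempty, i.e.\ that $A_0>0$; in the paper this requires analyzing the limiting orbit $l_0$ in the invariant plane $\{z=0\}$ (Lemma~\ref{lem.z0}) and using either the asymptotic stability of $P_1$ (for $p>p_F(\sigma)$) or a saddle-perturbation argument near $P_2$ (for $p=p_F(\sigma)$). Your proposal has no analogue of this step. Second, your ``standard asymptotic analysis'' in Step~2 is exactly the nontrivial part: ruling out oscillatory or intermediate decay is the content of Lemmas~\ref{lem.region} and \ref{lem.barlarge} in the paper and requires real work (indeed the paper argues via the auxiliary function $g(\xi)=\xi^{\sigma/(p-1)}f(\xi)$ and a min/max sequence argument on \eqref{ode.g}). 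Your identification of the dominant balance for \eqref{decay.P1} is also wrong: the fast tail $f\sim C\xi^{-(\sigma+2)/(p-m)}$ is governed by $\alpha f+\beta\xi f'\to0$ (since $\alpha/\beta=(\sigma+2)/(p-m)$), with both $(f^m)''$ and $\xi^\sigma f^p$ lower order, not by a diffusion--absorption balance. This matters because it affects the linearization you would use to argue stability/openness of the fast-tail set. Finally, your monotonicity argument in Step~3 is only a sketch; the paper needs the sliding Lemma~\ref{lem.monot} (with the expansion Lemmas~\ref{lem.exp1}--\ref{lem.exp2} to handle the contact at $\xi=0$) to turn the sets $\mathcal{A},\mathcal{B},\mathcal{C}$ into intervals.
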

Observe that the solutions $u$ in the form \eqref{SSS} with profiles $f(\cdot;A)$, presenting once more a tail as $\xi\to\infty$ and an initial trace as in \eqref{init.trace} for $|x|>0$, do no longer satisfy the condition \eqref{VSS2}, since now $N\beta-\alpha\geq0$. Moreover, since
$$
-\frac{\sigma+2}{p-m}=-\frac{\alpha}{\beta}\geq-N,
$$
we also infer that $f(\cdot;A)\notin L^1([0,\infty))$ and thus the corresponding solutions to profiles as in Theorem \ref{th.large} are no longer integrable. But they are still classical solutions to Eq. \eqref{eq1}.

\medskip

\noindent \textbf{Conjecture}. We strongly expect that $A_0=A^*$, that is, the decreasing profile $f(\cdot;A^*)$ with decay \eqref{decay.Pg} to be unique. However, this seems to be difficult to prove rigorously, and in particular a technique based on some analysis of a linear operator employed with success in \cite{CQW03b} and in previous works by one of the authors such as \cite{IL13} apparently fails here because of a lack of homogeneity of the corresponding operator precisely caused by the presence of the weight $\xi^{\sigma}$. We thus leave this uniqueness question as an \emph{open problem}.

\medskip

\noindent \textbf{Remark.} The above Theorems remain partially true if we allow $\sigma\in(-2,0)$. Indeed, the existence and uniqueness of solutions as stated in Theorems \ref{th.small} and \ref{th.large} still hold true, but the $C^2$ property in a neighborhood of $\xi=0$ (and thus, the property of being classical solutions) is lost in this range. Moreover, in the range $-2<\sigma\leq-1$ we even lose the initial condition $f'(0)=0$ in \eqref{IC}, and thus, the profiles do no longer give rise to solutions in the standard weak sense at $\xi=0$. We refrain from entering this range in the present work.

\medskip

\noindent \textbf{Organization of the paper.} Instead of a standard shooting method, whose adaptation from \cite{Le97} might be more tedious due to the presence of the variable coefficient $|x|^{\sigma}$ and the extra difficulties it involves (as commented after the statement of Theorem \ref{th.gen}), the proofs of our main results rely on a shooting method on a transformed version of the equation \eqref{SSODE} into a three-dimensional autonomous dynamical system, transformation that has been employed with success by one of the authors in the study of reaction-diffusion equations in recent works such as \cite{IS22, IMS23, ILS24b} and which has the advantage of giving also a ``visual" understanding of how the limiting cases $A=A^*$, respectively $A=A_*$, come into play. The local analysis of the dynamical system is performed in Section \ref{sec.transf}, followed by some preparatory results on the global analysis in Section \ref{sec.prep}. Uniqueness follows from a monotonicity of the decreasing profiles $f(\cdot;A)$ with respect to the initial condition $f(0)$, which is established in Section \ref{sec.monot}. Finally, after all these preparations, the proofs of our main results are given in Section \ref{sec.thm}, closing the paper.

\section{The transformation. An autonomous dynamical system}\label{sec.transf}

We consider the following transformation, which has been employed with success in previous works on reaction-diffusion equations (see for example \cite{IMS23}):
\begin{equation}\label{PSchange}
X(\xi)=\frac{m}{\alpha}\xi^{-2}f(\xi)^{m-1}, \ \ Y(\xi)=\frac{m}{\alpha}\xi^{-1}f(\xi)^{m-2}f'(\xi), \ \ Z(\xi)=\frac{1}{\alpha}\xi^{\sigma}f(\xi)^{p-1},
\end{equation}
where the new independent variable $\eta$ is introduced in an implicit way via the differential equation
\begin{equation}\label{ind.var}
\frac{d\eta}{d\xi}=\frac{\alpha}{m}\xi f(\xi)^{1-m}=\frac{1}{\xi X(\xi)}.
\end{equation}
Noticing that the second formula in \eqref{PSchange} gives
$$
f'(\xi)=\frac{\alpha}{m}\xi Y(\eta)f^{2-m}(\xi), \qquad (f^m)'(\xi)=\alpha Y(\eta)\xi f(\xi)
$$
and
$$
(f^m)''(\xi)=\alpha\left(\xi f(\xi)\frac{dY}{d\xi}+\frac{\alpha}{m}\xi f^{2-m}(\xi)Y^2(\eta)+Y(\eta)f(\xi)\right),
$$
we replace the above formulas into \eqref{SSODE} and, after performing some straightforward calculations and pass to derivatives with respect to $\eta$ employing \eqref{ind.var}, we are left with the following autonomous three-dimensional dynamical system
\begin{equation}\label{PSsyst1}
\left\{\begin{array}{ll}\dot{X}=X[(m-1)Y-2X],\\
\dot{Y}=-Y^2-\frac{p-m}{\sigma+2}Y-X-NXY+XZ,\\
\dot{Z}=Z[(p-1)Y+\sigma X],\end{array}\right.
\end{equation}
where the dot derivatives are taken with respect to $\eta$. Related to it, and in order to visualize and study the limit $X\to\infty$ in the previous dynamical system, we also consider a further change of variable
\begin{equation}\label{PSchange.inf}
x=\frac{1}{X}, \quad y=\frac{Y}{X}, \quad z=\frac{Z}{X}, \quad \frac{d\eta_1}{d\eta}=X(\eta),
\end{equation}
which in terms of profiles writes
\begin{equation}\label{PSchange2}
x(\xi)=\frac{\alpha}{m}\xi^2f(\xi)^{1-m}, \qquad y(\xi)=\frac{\xi f'(\xi)}{f(\xi)}, \qquad z(\xi)=\frac{1}{m}\xi^{\sigma+2}f^{p-m}(\xi),
\end{equation}
together with the independent variable $\eta_1=\ln\,\xi$. Replacing the change of variable \eqref{PSchange.inf} in the system \eqref{PSsyst1} and taking derivatives with respect to $\eta_1$, we easily obtain that $(x,y,z)$ satisfy the autonomous system
\begin{equation}\label{PSsyst2}
\left\{\begin{array}{ll}\dot{x}=x(2-(m-1)y), \\ \dot{y}=-x-(N-2)y+z-my^2-\frac{p-m}{\sigma+2}xy, \\ \dot{z}=z(\sigma+2+(p-m)y).\end{array}\right.
\end{equation}
Notice that the systems \eqref{PSsyst1} and \eqref{PSsyst2} are dual one to the other, in the sense that one more application of the change of variable \eqref{PSchange.inf} to \eqref{PSsyst2} gets back to \eqref{PSsyst1}. Moreover, according to the theory of the Poincar\'e sphere (see for example \cite[Theorem 5 (a), Section 3.10]{Pe}), the second system represents the limit as $X\to\infty$ of the first system and it will be used to analyze locally the critical points at infinity of it. Let us observe that, since we are only interested in non-negative solutions, we have $X\geq0$, $Z\geq0$ in \eqref{PSsyst1} (respectively $x\geq0$, $z\geq0$ in \eqref{PSsyst2}) and the planes $\{X=0\}$ and $\{Z=0\}$ (respectively $\{x=0\}$ and $\{z=0\}$) are invariant for \eqref{PSsyst1} (respectively \eqref{PSsyst2}).

\subsection{Critical points of the system \eqref{PSsyst1}}

Equating the right-hand side of \eqref{PSsyst1} to zero, we obtain the following critical points, all them lying in the plane $\{X=0\}$:
\begin{equation}\label{crit.syst1}
P_1=(0,0,0), \qquad P_2=\left(0,-\frac{p-m}{\sigma+2},0\right), \qquad P_{\gamma}=(0,0,\gamma), \qquad \gamma\in(0,\infty).
\end{equation}
We analyze below the local behavior of the trajectories of the system \eqref{PSsyst1} near these points.
\begin{lemma}\label{lem.P1}
The critical point $P_1$ is a non-hyperbolic point having a one-dimensional stable manifold and two-dimensional center manifolds with stable direction of the flow, forming thus a three dimensional center-stable manifold. The trajectories entering $P_1$ on the center-stable manifold correspond to profiles having the local behavior \eqref{decay.P1}.
\end{lemma}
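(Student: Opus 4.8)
The plan is a center-manifold analysis of \eqref{PSsyst1} at $P_1$, followed by the identification of the corresponding spatial asymptotics. First I would linearize \eqref{PSsyst1} at $P_1=(0,0,0)$, obtaining the Jacobian
\[
J(P_1)=\begin{pmatrix} 0 & 0 & 0\\[2pt] -1 & -\dfrac{p-m}{\sigma+2} & 0\\[4pt] 0 & 0 & 0\end{pmatrix},
\]
whose eigenvalues are $0$, $0$ and $\lambda=-\frac{p-m}{\sigma+2}<0$ (using $p>m$). The eigenvector for $\lambda$ is $(0,1,0)$; since the $Y$-axis $\{X=Z=0\}$ is invariant and there \eqref{PSsyst1} reduces to $\dot Y=-Y\big(Y+\frac{p-m}{\sigma+2}\big)$, a neighbourhood of the origin on that axis is attracted to $P_1$, giving the one-dimensional local stable manifold. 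The center subspace is $\ker J(P_1)=\mathrm{span}\{(p-m,-(\sigma+2),0),(0,0,1)\}$, hence two-dimensional, and by the center manifold theorem (see, e.g., \cite[Section 2.12]{Pe}) there exists a local, in general non-unique (whence the plural in the statement), two-dimensional center manifold $\mathcal W^c_{\mathrm{loc}}(P_1)$, which may be written as a graph $Y=h(X,Z)=-\frac{\sigma+2}{p-m}\,X+O(|(X,Z)|^2)$, with no linear term in $Z$.

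Next I would reduce \eqref{PSsyst1} to $\mathcal W^c_{\mathrm{loc}}(P_1)$: substituting $Y=h(X,Z)$ into the equations for $\dot X$ and $\dot Z$ and using the algebraic identities $(m-1)(\sigma+2)+2(p-m)=L$ and $-(p-1)(\sigma+2)+\sigma(p-m)=-L$, the reduced flow on the invariant region $\{X\ge0,\ Z\ge0\}$ reads
\[
\dot X=-\frac{L}{p-m}\,X^{2}\,(1+o(1)), \qquad \dot Z=-\frac{L}{p-m}\,XZ\,(1+o(1)).
\]
Since $L>0$, near $P_1$ one has $\dot X<0$ for $X>0$ and $\dot Z\le0$; then $\frac{d}{d\eta}(1/X)\to\frac{L}{p-m}$ yields $X(\eta)\sim\frac{p-m}{L}\,\eta^{-1}\to0$, and from $\int^{\infty}X\,d\eta=+\infty$ one gets $Z\to0$ as well, with $Z=O(X)$. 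Hence every orbit of the reduced system with $X>0$ enters $P_1$ as $\eta\to+\infty$ (the exceptional orbits being the non-isolated equilibria on $\{X=Y=0\}$, which do not correspond to positive profiles since $X=0$ forces $f=0$); together with the stable manifold this produces the three-dimensional center-stable structure claimed. This is the delicate step: because of the double zero eigenvalue and the line of equilibria along the $Z$-axis, one cannot merely read off signs from hyperbolic eigenvalues but must push the expansion of $h$ to quadratic order and check that the cubic corrections change neither the sign of $\dot X$ nor the monotone decay of $X$ and $Z$.

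Finally I would translate ``orbit entering $P_1$'' into the spatial behaviour \eqref{decay.P1}. By \eqref{PSchange2}, $y:=Y/X=\xi f'(\xi)/f(\xi)$, the independent variable is $\eta_1=\ln\xi$ and $d\eta_1/d\eta=X$. On $\mathcal W^c_{\mathrm{loc}}(P_1)$ we have $Y=-\frac{\sigma+2}{p-m}X+O(|(X,Z)|^2)$, so, using $Z=O(X)$, $y+\frac{\sigma+2}{p-m}=O(X)=O(\xi^{-L/(p-m)})$, which is integrable against $d\xi/\xi$ at $\xi=\infty$; moreover $\eta_1\to+\infty$, i.e.\ $\xi\to+\infty$, along the orbit. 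Integrating $(\ln f)'=y/\xi$ then gives $f(\xi)\sim C\,\xi^{-(\sigma+2)/(p-m)}$ as $\xi\to\infty$ for some $C>0$, which is exactly \eqref{decay.P1}. Conversely, inserting a profile with decay \eqref{decay.P1} into \eqref{PSchange} immediately gives $X,Y,Z\to0$, i.e.\ an orbit entering $P_1$. Apart from the center-manifold reduction mentioned above, the remaining steps are routine.
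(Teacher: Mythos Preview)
Your plan is essentially the paper's own: linearize at $P_1$, read off the one negative and two zero eigenvalues, build the center manifold as a graph over $(X,Z)$, reduce the flow, and then translate back to profiles. The paper uses the adapted variable $W=X+\tfrac{p-m}{\sigma+2}Y$ instead of $Y$, but this is only cosmetics; your identities $(m-1)(\sigma+2)+2(p-m)=L$ and $-(p-1)(\sigma+2)+\sigma(p-m)=-L$ reproduce exactly the paper's leading coefficients $-1/\beta=-L/(p-m)$ in the reduced system, and your reading of the asymptotics via $y=Y/X=\xi f'/f$ is equivalent to the paper's observation $Z\sim KX$.

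The one point you flag as ``delicate'' but do not actually carry out is precisely the step the paper executes: computing the quadratic part of $h$ and showing that the $Z^2$-coefficient vanishes (and, by an easy induction, that no pure power of $Z$ ever appears). This matters for your argument as written: from the linear approximation $Y=-\tfrac{\sigma+2}{p-m}X+O(|(X,Z)|^2)$ alone, the error in $\dot X$ is $X\cdot O(|(X,Z)|^2)$, which could contain a term $cXZ^2$ and is then \emph{not} $o(X^2)$ in the region $Z^2\gg X$; neither the sign of $\dot X$, nor the claim $Z=O(X)$, nor the ``$(1+o(1))$'' in your reduced system would follow. Once one checks $h(0,Z)\equiv0$ (equivalently, the line of equilibria $\{X=Y=0\}$ sits on the center manifold), every term of $h_2$ carries a factor $X$, the reduced flow becomes $\dot X=X^2(-\tfrac{L}{p-m}+O(|(X,Z)|))$, $\dot Z=XZ(-\tfrac{L}{p-m}+O(|(X,Z)|))$, and the rest of your argument goes through verbatim.
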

\begin{proof}
The linearization of the system \eqref{PSsyst1} near $P_1$ has the matrix
$$
M(P_1)=\left(
         \begin{array}{ccc}
           0 & 0 & 0 \\
           -1 & -\frac{p-m}{\sigma+2} & 0 \\
           0 & 0 & 0 \\
         \end{array}
       \right),
$$
leading to a one-dimensional stable manifold and two dimensional center manifolds (which may not be unique). In order to study the center manifold, we replace $Y$ by the new variable
\begin{equation}\label{cmf}
W:=X+\frac{p-m}{\sigma+2}Y, \qquad {\rm or \ equivalently}, \qquad Y=\frac{\sigma+2}{p-m}(W-X),
\end{equation}
obtaining, after some direct calculations, the new system
\begin{equation}\label{interm1}
\left\{\begin{array}{ll}\dot{X}=-\frac{1}{\beta}X^2+\frac{(m-1)\alpha}{\beta}XW,\\
\dot{W}=-\frac{\beta}{\alpha}W-\frac{\alpha}{\beta}W^2-\frac{-\alpha(m+1)+N\beta}{\beta}XW+\frac{\beta}{\alpha}XZ+\frac{(N-2)\beta-m\alpha}{\beta}X^2,\\
\dot{Z}=-\frac{1}{\beta}XZ+\frac{\alpha(p-1)}{\beta}ZW.\end{array}\right.
\end{equation}
Following \cite[Section 2.5]{Carr}, we look for a second order Taylor approximation of the center manifolds in the form
$$
W=aX^2+bXZ+cZ^2+o(|(X,Z)|^3),
$$
with coefficients $a$, $b$, $c$ to be determined. A direct substitution of the previous ansatz in the equation of the center manifold (see for example \cite[Theorem 1, Section 2.12]{Pe}) and employing the system \eqref{interm1} leads, by equating terms of the same degree in the resulting equation, to the following coefficients
\begin{equation*}
a=\frac{\sigma+2}{(p-m)^2}\left[p(N-2)-m(N+\sigma)\right], \qquad b=1, \qquad c=0,
\end{equation*}
while a simple induction then shows that, furthermore, there will be no terms of pure powers of $Z$ (as there are none in the second equation of \eqref{interm1}), that is, the center manifold has the local approximation
\begin{equation}\label{centerP1}
W=\frac{\sigma+2}{(p-m)^2}\left[p(N-2)-m(N+\sigma)\right]X^2+XZ+XO(|(X,Z)|^2).
\end{equation}
Moreover, according to the Reduction Principle (see \cite[Section 2.4]{Carr}), the direction of the flow on the center manifolds is given by the reduced system maintaining only the second degree, dominating terms in the first and third equation of the system \eqref{interm1} after replacing $W$ by its approximation \eqref{centerP1}, namely
\begin{equation}\label{interm2}
\left\{\begin{array}{ll}\dot{X}&=-\frac{1}{\beta}X^2+X^2O(|(X,Z)|),\\
\dot{Z}&=-\frac{1}{\beta}XZ+XO(|(X,Z)|^2),\end{array}\right.
\end{equation}
in a neighborhood of its origin $(X,Z)=(0,0)$. We thus infer that the flow goes into the stable direction on every center manifold, and that there are infinitely many center manifolds (according to, for example, the theory in \cite[Section 3]{Sij}), forming together with the stable manifold, a center-stable manifold of dimension three. The trajectories contained in this center-stable manifold have a local behavior obtained, in a first approximation, by the integration of the system \eqref{interm2}, which leads to
$$
Z(\eta)\sim KX(\eta), \qquad {\rm as} \ \eta\to\infty, \qquad K>0,
$$
which in terms of profiles leads to \eqref{decay.P1}, by undoing \eqref{PSchange}.
\end{proof}
We turn now our attention to the critical point $P_2$.
\begin{lemma}\label{lem.P2}
The critical point $P_2$ is a (hyperbolic) saddle point, with a two-dimensional stable manifold and a one-dimensional unstable manifold contained in the $Y$ axis. The trajectories contained in the two-dimensional stable manifold correspond to profiles presenting an interface at some point $\xi_0\in(0,\infty)$ with the precise local behavior
\begin{equation}\label{beh.P2}
f(\xi)\sim\left[C-\frac{\beta(m-1)}{2m}\xi^2\right]_{+}^{1/(m-1)}, \qquad {\rm as} \ \xi\to\xi_0=\sqrt{\frac{2mC}{\beta(m-1)}}, \ \xi<\xi_0,
\end{equation}
where $C>0$ is a free constant.
\end{lemma}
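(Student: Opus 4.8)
The plan is to carry out a standard linearization at $P_2$, identify the invariant directions, and then translate the dynamics near $P_2$ back into the behavior of the profiles $f$ via the inverse of \eqref{PSchange}. First I would compute the Jacobian matrix $M(P_2)$ of the system \eqref{PSsyst1} at $P_2=(0,-(p-m)/(\sigma+2),0)$. Using $\dot{X}=X[(m-1)Y-2X]$, $\dot Z=Z[(p-1)Y+\sigma X]$, and differentiating the middle equation, one finds that $M(P_2)$ is lower/block triangular in suitable order: the $X$-direction contributes an eigenvalue $(m-1)Y|_{P_2}=-(m-1)(p-m)/(\sigma+2)<0$, the $Z$-direction contributes $(p-1)Y|_{P_2}=-(p-1)(p-m)/(\sigma+2)<0$, and the $Y$-direction contributes $\partial_Y(-Y^2-\tfrac{p-m}{\sigma+2}Y)|_{P_2}=-2Y-\tfrac{p-m}{\sigma+2}\big|_{P_2}=+\tfrac{p-m}{\sigma+2}>0$. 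Hence $P_2$ is a hyperbolic saddle with a one-dimensional unstable manifold (tangent to, and in fact equal to, the $Y$-axis, which is invariant since $X=Z=0$ forces $\dot X=\dot Z=0$) and a two-dimensional stable manifold. This establishes the first sentence of the statement.

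Next I would extract the local behavior of profiles corresponding to trajectories on the two-dimensional stable manifold, i.e. trajectories entering $P_2$ as $\eta\to+\infty$. On such a trajectory $X(\eta)\to0$ and $Y(\eta)\to-(p-m)/(\sigma+2)$. By the definition \eqref{PSchange}, $X=\tfrac{m}{\alpha}\xi^{-2}f^{m-1}\to0$ means $\xi$ stays bounded (since $f$ is bounded away from $0$ near the interface is the point to check — actually $f\to$ some positive or zero value; the key is that the profile reaches an interface at a \emph{finite} $\xi_0$). The cleanest route is to use instead the dual variables: from \eqref{PSchange2}, $y=\xi f'/f$, and the relation $\dot\eta/\dot\xi$ from \eqref{ind.var} together with $Y/X=y$ gives $y\to Y/X$; but since $X\to0$ with $Y$ finite, $|y|=|Y|/X\to\infty$, which signals $f\to0$ at finite $\xi_0$, i.e. an interface. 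To pin down the profile near $\xi_0$, I would return to the profile ODE \eqref{SSODE}: near an interface point $\xi_0$ where $f\to0$, the dominant balance is between $(f^m)''+\tfrac{N-1}{\xi}(f^m)'$ and $\beta\xi f'$ (the terms $\alpha f$ and $\xi^\sigma f^p$ being lower order), giving approximately $(f^m)''\approx -\beta\xi_0 f'$. Writing $g=f^{m-1}$ and computing, a short calculation reduces this to $g'\approx -\tfrac{\beta(m-1)}{m}\xi_0$ near $\xi_0$, hence $g(\xi)=f(\xi)^{m-1}\approx \tfrac{\beta(m-1)}{m}\xi_0(\xi_0-\xi)$; rewriting $\xi_0(\xi_0-\xi)=\tfrac12(\xi_0^2-\xi^2)+O((\xi_0-\xi)^2)$ and absorbing constants into $C:=\tfrac{\beta(m-1)}{2m}\xi_0^2$ yields exactly \eqref{beh.P2}, with the relation $\xi_0=\sqrt{2mC/(\beta(m-1))}$ consistent with the stated formula. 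The non-negative-part bracket $[\cdot]_+$ encodes that $f\equiv 0$ beyond $\xi_0$.

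The main obstacle I anticipate is rigorously justifying that trajectories entering $P_2$ genuinely correspond to a \emph{finite} interface position $\xi_0$ and not to $\xi\to\infty$; this requires controlling the integral $\eta(\xi)=\int \tfrac{\alpha}{m}\xi f^{1-m}\,d\xi$ and showing it diverges as $\xi\to\xi_0^-$ (so that $\eta\to+\infty$ matches the convergence to $P_2$) while $\xi_0<\infty$. The divergence of $\eta$ follows because $f^{1-m}\sim C(\xi_0-\xi)^{-1}$ near $\xi_0$ from the behavior just derived, whose integral is logarithmically divergent — this is the self-consistency check that closes the argument. A secondary technical point is verifying that the lower-order terms $\alpha f$ and $\xi^\sigma f^p$ are indeed negligible in the dominant balance near $\xi_0$: since $f\to0$, both vanish, while $(f^m)''$ and $\beta\xi f'$ remain of order $(\xi_0-\xi)^{(2-m)/(m-1)}$ and $(\xi_0-\xi)^{(2-m)/(m-1)}$ respectively (comparable), confirming the balance. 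Once these are in place, undoing the change of variables \eqref{PSchange} on the explicit local expansion gives the claimed asymptotics, completing the proof.
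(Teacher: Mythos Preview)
Your linearization is correct and matches the paper's: the eigenvalues $-(m-1)\beta/\alpha$, $\beta/\alpha$, $-(p-1)\beta/\alpha$ (recalling $\beta/\alpha=(p-m)/(\sigma+2)$) give a hyperbolic saddle, and the invariance of the $Y$-axis pins down the unstable manifold exactly as you say.

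Where you diverge from the paper is in extracting the asymptotic profile \eqref{beh.P2}. You pass to the dual variable $y=Y/X$, argue $|y|\to\infty$ signals an interface, then go back to the profile ODE \eqref{SSODE} and perform a dominant-balance analysis near $\xi_0$ to recover $f^{m-1}\sim C-\tfrac{\beta(m-1)}{2m}\xi^2$. This works, and your self-consistency check (the logarithmic divergence of $\eta$ as $\xi\to\xi_0^-$, confirming that ``$\eta\to\infty$'' indeed corresponds to a finite $\xi_0$) is a genuinely useful addition that the paper leaves implicit. However, the paper's route is considerably shorter: since $Y(\eta)\to-\beta/\alpha$ along the stable manifold, one simply reads off from the definition $Y=\tfrac{m}{\alpha}\xi^{-1}f^{m-2}f'=\tfrac{1}{\alpha(m-1)}\xi^{-1}(f^{m-1})'$ that $(f^{m-1})'(\xi)\sim-\tfrac{\beta(m-1)}{m}\xi$, and a single integration gives \eqref{beh.P2} directly, with the interface condition $(f^m)'(\xi_0)=0$ immediate from the explicit form. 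Your detour through $y$ and the dominant balance recovers the same relation but at the cost of having to justify separately that $\xi_0<\infty$ and that the neglected terms $\alpha f$, $\tfrac{N-1}{\xi}(f^m)'$, $\xi^\sigma f^p$ are lower order---all of which are automatic once you integrate the $Y$-relation first.
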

\begin{proof}
The linearization of \eqref{PSsyst1} near $P_2$ has the matrix
$$
M(P_2)=\left(
         \begin{array}{ccc}
           -\frac{(m-1)\beta}{\alpha} & 0 & 0 \\
           -1+\frac{N\beta}{\alpha} & \frac{\beta}{\alpha} & 0 \\
           0 & 0 & -\frac{(p-1)\beta}{\alpha} \\
         \end{array}
       \right)
$$
with eigenvalues $\lambda_1=-(m-1)\beta/\alpha$, $\lambda_2=\beta/\alpha$ and $\lambda_3=-(p-1)\beta/\alpha$, and corresponding eigenvectors
$$
e_1=\left(1,\frac{N\beta-\alpha}{m\beta},0\right), \ e_2=(0,1,0), \ e_3=(0,0,1).
$$
The invariance of the $Y$-axis (as intersection of two invariant plane) together with the uniqueness of the unstable manifold given by \cite[Theorem 3.2.1]{GH}, prove that the unstable manifold is contained in the $Y$ axis. The stable manifold is two-dimensional and tangent to the plane spanned by the eigenvectors $e_1$ and $e_3$. The trajectories entering $P_1$ on the stable manifold correspond to profiles such that $Y(\eta)\to-\beta/\alpha$ as $\eta\to\infty$, which leads by undoing \eqref{PSchange} and integration to \eqref{beh.P2}. We readily notice that \eqref{beh.P2} implies the interface condition $f(\xi_0)=0$, $(f^m)'(\xi_0)=0$ and $f(\xi)>0$ for $\xi$ in a left-neighborhood of $\xi_0$.
\end{proof}
Finally, the analysis of the critical points $P_{\gamma}$ with $\gamma>0$ leads to the appearance of the local behavior \eqref{decay.Pg} as $\xi\to\infty$.
\begin{lemma}\label{lem.Pg}
Letting
$$
\gamma_0:=\frac{1}{\alpha(p-1)},
$$
the critical point $P_{\gamma_0}$ has a two-dimensional center-stable manifold with trajectories arriving from the region $\{X>0, Z>0\}$ of the phase space. These trajectories correspond to profiles with local behavior given by \eqref{decay.Pg} as $\xi\to\infty$. For any $\gamma>0$, $\gamma\neq\gamma_0$, there are no trajectories of the system \eqref{PSsyst1} entering $P_{\gamma}$ from the region $\{X>0\}$ of the phase space.
\end{lemma}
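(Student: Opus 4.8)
The plan is to analyze the linearization of \eqref{PSsyst1} at a generic point $P_\gamma = (0,0,\gamma)$. A direct computation of the Jacobian at $P_\gamma$ gives a matrix with the block structure
\begin{equation*}
M(P_\gamma) = \begin{pmatrix} 0 & 0 & 0 \\ -1+\gamma & -\frac{p-m}{\sigma+2} & 0 \\ \sigma\gamma & (p-1)\gamma & 0 \end{pmatrix},
\end{equation*}
whose eigenvalues are $0$ (double, from the degenerate $X$- and $Z$-directions) and $\lambda = -\frac{p-m}{\sigma+2} < 0$. So every $P_\gamma$ is non-hyperbolic with a one-dimensional stable manifold (tangent to the $Y$-axis direction, roughly) and a two-dimensional center manifold. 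The key point is that whether a center manifold actually \emph{attracts} trajectories coming from the interior $\{X>0,Z>0\}$ depends on the sign of the reduced dynamics, and this sign changes precisely at $\gamma=\gamma_0$.

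First I would set up the center-manifold reduction. Translating $Z \mapsto Z+\gamma$ and eliminating the hyperbolic variable $Y$ by writing $Y$ as a graph $Y = h(X,Z)$ over the center variables $(X,Z)$, with $h(X,Z) = O(|(X,Z)|)$ determined by the invariance equation, one substitutes back into the $\dot X$ and $\dot Z$ equations to obtain the reduced planar system on the center manifold. To leading order, from the first equation $\dot X = X[(m-1)Y - 2X]$ and the third $\dot Z = Z[(p-1)Y+\sigma X]$, the center-manifold constraint forces $Y \approx (X - XZ)/\!\bigl(\tfrac{p-m}{\sigma+2}\bigr) + \cdots$ near $P_\gamma$ (solving $-X-NXY+XZ+\ldots = \tfrac{p-m}{\sigma+2}Y$ at lowest order, after accounting for the shifted $Z$); the crucial linear-in-$X$ coefficient of $Y$ on the center manifold near $P_\gamma$ turns out to be proportional to $(1-\alpha(p-1)\gamma)$ up to normalization, i.e.\ it vanishes exactly when $\gamma=\gamma_0 = 1/(\alpha(p-1))$. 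Then the reduced equation for $X$ reads, schematically, $\dot X \sim c(\gamma)\, X^2 + \text{(higher order)}$ with $c(\gamma)$ changing sign through $\gamma_0$: for $\gamma = \gamma_0$ the quadratic term degenerates and one goes to the next order, recovering a center-stable manifold on which $X(\eta)\to 0$ as $\eta\to\infty$ and $Z(\eta)\to\gamma_0$, which upon undoing \eqref{PSchange} yields the decay \eqref{decay.Pg}; for $\gamma\neq\gamma_0$ the sign of $c(\gamma)$ shows that interior trajectories are repelled in the $X$-direction, so no trajectory from $\{X>0\}$ can enter $P_\gamma$.

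More concretely for the last assertion, I would argue via a monotone-quantity / invariant-region argument rather than only the formal reduction: along any interior trajectory one tracks the ratio $Z/X^{\sigma/(p-1)}$ or, equivalently in profile terms, the quantity $\xi^{\sigma/(p-1)}f(\xi)$, whose limit (if the trajectory tends to some $P_\gamma$) must equal $(\alpha\gamma)^{1/(p-1)}$ from the definition of $Z$, while the $X$-equation forces, via its sign, that this limit is compatible with approach to $P_\gamma$ only when the reduced quadratic coefficient does not push $X$ away — pinning $\gamma$ to $\gamma_0$. This is essentially the statement that \eqref{decay.Pg} has the explicit constant $(1/(p-1))^{1/(p-1)}$, consistent with $\alpha\gamma_0 = 1/(p-1)$.

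The main obstacle I expect is the bookkeeping in the center-manifold computation at $P_\gamma$: unlike at $P_1$, the center manifold here is two-dimensional over $(X,Z)$ but the point sits at height $\gamma$, so the expansion of $h(X,Z)$ must be carried to the order where the $\gamma$-dependence first enters the effective $\dot X$ dynamics, and one must verify that the sign of the resulting coefficient is genuinely controlled by $\gamma - \gamma_0$ (with no cancellation from the $O(X^2)$ cross-terms involving $N$ and $\sigma$). A secondary subtlety is justifying rigorously — not merely formally — that "no interior trajectory enters $P_\gamma$ for $\gamma\neq\gamma_0$": the clean way is to exhibit, for $\gamma$ in a punctured neighborhood of $\gamma_0$, a small cone or paraboloid around $P_\gamma$ that interior trajectories cannot penetrate, using the established sign of $\dot X$ there; combined with the fact (to be used from the global analysis in later sections) that interior trajectories stay in $\{X>0\}$, this closes the argument.
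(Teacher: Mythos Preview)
Your overall strategy---a center-manifold reduction at $P_\gamma$---is the right one and matches the paper's rigorous route, but the mechanism you identify is the wrong one. The leading-order graph on the center manifold is indeed $Y\approx \frac{\alpha(\gamma-1)}{\beta}X$ (solve $(\gamma-1)X-\frac{\beta}{\alpha}Y=0$ from the linearized $\dot Y$-equation after the shift $Z\mapsto\gamma+\tilde Z$), but this coefficient vanishes at $\gamma=1$, not at $\gamma_0$; it is not ``proportional to $1-\alpha(p-1)\gamma$''. Plugging it into $\dot X=X[(m-1)Y-2X]$ gives $\dot X\sim c_1(\gamma)X^2$ with $c_1(\gamma)=\frac{(m-1)\alpha(\gamma-1)-2\beta}{\beta}$, which vanishes at $\gamma=\frac{1}{(m-1)\alpha}\neq\gamma_0=\frac{1}{(p-1)\alpha}$. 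In fact $c_1(\gamma_0)<0$ (since $p>m$), so $X$ \emph{does} decay near $P_{\gamma_0}$ and also near every $P_\gamma$ with $\gamma$ in a full neighborhood of $\gamma_0$; ``repulsion in $X$'' therefore cannot be what singles out $\gamma_0$.

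The selection actually occurs in the $Z$-direction. Substituting the same graph into $\dot Z=Z[(p-1)Y+\sigma X]$ yields a \emph{linear} drift $\dot{\tilde Z}\sim c_2(\gamma)X$ with $c_2(\gamma)=\gamma\,\frac{(p-1)\alpha(\gamma-1)+\sigma\beta}{\beta}$, and using $(p-1)\alpha-\sigma\beta=1$ one checks $c_2(\gamma)=0$ exactly when $\gamma=\gamma_0$. For $\gamma\neq\gamma_0$ the decay $X(\eta)\sim(|c_1|\eta)^{-1}$ makes $\int c_2(\gamma)X\,d\eta$ diverge logarithmically, so $Z$ cannot settle at $\gamma$; equivalently, the linearization $M(P_\gamma)$ has a nontrivial $2\times2$ Jordan block for the zero eigenvalue (nilpotent coupling $=c_2(\gamma)$) whenever $\gamma\neq\gamma_0$, and becomes semisimple precisely at $\gamma_0$. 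This is exactly what the paper's formal profile argument captures: inserting $f\sim K\xi^{-\sigma/(p-1)}$ into \eqref{SSODE}, the dominant balance $\alpha-\beta\sigma/(p-1)-K^{p-1}=0$ forces $K^{p-1}=1/(p-1)$, i.e.\ $\gamma=\gamma_0$. Your secondary suggestion of tracking $\xi^{\sigma/(p-1)}f(\xi)$ is in fact this argument and is the cleaner route; the paper then makes it rigorous via the change of variables $W=\frac{\beta}{\alpha}Y+(1-\gamma)X$, $V=Z-\gamma-\frac{(p-1)\alpha\gamma}{\beta}Y$, designed to isolate the stable direction from the (possibly nilpotent) center pair.
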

\begin{proof}
We give here first a direct, but formal argument in terms of profiles. Assume that there is $\gamma\in(0,\infty)$ and a trajectory entering the point $P_{\gamma}$ from the region $\{X>0\}$ of the phase space. Since $Z(\eta)\to\gamma$ as $\eta\to\infty$, it is easy to see (by undoing the first and third definitions in \eqref{PSchange}) that this trajectory is locally mapped into a profile $f(\xi)$ with local behavior
$$
f(\xi)\sim K\xi^{-\sigma/(p-1)}, \qquad {\rm as} \ \xi\to\infty, \qquad K=(\alpha\gamma)^{1/(p-1)}>0.
$$
Assuming (at a formal level) that also the derivative behaves as
$$
f'(\xi)\sim-\frac{K\sigma}{p-1}\xi^{-\sigma/(p-1)-1}, \qquad {\rm as} \ \xi\to\infty,
$$
and introducing these first order approximations into the differential equation \eqref{SSODE}, we infer that, in a first approximation, the dominating order $\xi^{-\sigma/(p-1)}$ is given by the three last terms in \eqref{SSODE} and we get, as $\xi\to\infty$,
\begin{equation*}
\begin{split}
\alpha f(\xi)+\beta\xi f'(\xi)-\xi^{\sigma}f^p(\xi)&\sim K\left(\alpha-\frac{\beta\sigma}{p-1}-K^{p-1}\right)\xi^{-\sigma/(p-1)}\\
&\sim K\left(\frac{1}{p-1}-K^{p-1}\right)\xi^{-\sigma/(p-1)},
\end{split}
\end{equation*}
hence the only possibility to cancel out the first order approximation is to take $K^{p-1}=1/(p-1)$, which leads to $\gamma=\gamma_0$, and to the local behavior \eqref{decay.Pg}, as claimed. A rigorous proof is based on the analysis of the center manifold of the critical point $P_{\gamma}$ for any $\gamma\in(0,\infty)$, which is rather technical, based on the following change of variable in the system \eqref{PSsyst1}:
$$
(X,Y,Z)\mapsto (X,W,V), \qquad W=\frac{\beta}{\alpha}Y+(1-\gamma)X, \qquad V=Z-\gamma-kY, \qquad k=\frac{(p-1)\alpha\gamma}{\beta},
$$
and the calculations leading to the analysis of the center manifold follow very closely the lines of the proof of \cite[Lemma 3.2]{IS24} (see also the ones in \cite[Lemma 2.4]{ILS24b}), the only difference with respect to these references being the fact that the only nonzero eigenvalue of the linearization of \eqref{PSsyst1} near $P_{\gamma}$ is now negative, which leads to the formation of a center-stable two-dimensional manifold. We omit here the very similar details and we refer the reader to the quoted references.
\end{proof}


\subsection{Critical points of the system \eqref{PSsyst2}}

Equating the right-hand side of \eqref{PSsyst2} to zero, we obtain the following critical points, all them lying in the plane $\{x=0\}$:
\begin{equation}\label{crit.syst2}
Q_1=(0,0,0), \qquad Q_2=\left(0,-\frac{N-2}{m},0\right), \qquad Q_3=\left(0,-\frac{\sigma+2}{p-m},Z_0\right),
\end{equation}
with
\begin{equation}\label{Z0}
Z_0=\frac{(\sigma+2)[m(N+\sigma)-p(N-2)]}{(p-m)^2},
\end{equation}
the latter of them existing only for $m<p<m(N+\sigma)/(N-2)$. We analyze below the local behavior of the trajectories of the system \eqref{PSsyst2} near these points. To fix the ideas, let us work for now in dimension $N\geq3$. The critical point $Q_1$ is the most interesting for our study.
\begin{lemma}\label{lem.Q1}
The critical point $Q_1$ is a saddle point in the system \eqref{PSsyst2} with a one-dimensional stable manifold contained in the $y$ axis and a two-dimensional unstable manifold. The trajectories on the unstable manifold form a one-parameter family with first approximation
\begin{equation}\label{lC}
(l_C): \ y(\eta_1)\sim-\frac{x(\eta_1)}{N}, \qquad z(\eta_1)\sim Cx(\eta_1)^{(\sigma+2)/2}, \qquad C\in[0,\infty)
\end{equation}
as $\eta_1\to-\infty$, and correspond to profiles with the local behavior
\begin{equation}\label{beh.Q1}
f(\xi)\sim\left(D-\frac{\alpha(m-1)}{2mN}\xi^2\right)^{1/(m-1)}, \qquad {\rm as} \ \xi\to0, \qquad D\in(0,\infty).
\end{equation}
In particular, the profile $f(\cdot;A)$ solution to \eqref{SSODE}-\eqref{IC} corresponds to the trajectory $l_C$ in the family \eqref{lC} with
\begin{equation}\label{bij}
A=(Cm)^{2/L}\left(\frac{\alpha}{m}\right)^{(\sigma+2)/L}, \qquad L=\sigma(m-1)+2(p-1).
\end{equation}
\end{lemma}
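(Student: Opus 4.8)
The plan is to carry out a standard local analysis at the critical point $Q_1=(0,0,0)$ of the system \eqref{PSsyst2}, and then translate the conclusions back to profiles through the change of variables \eqref{PSchange2}. First I would linearize \eqref{PSsyst2} at $Q_1$: the Jacobian there is upper/lower triangular after reordering, with diagonal entries coming from $\dot{x}=2x+\cdots$, the $-(N-2)y$ term in $\dot{y}$, and $\dot z=(\sigma+2)z+\cdots$, giving eigenvalues $\lambda_x=2>0$, $\lambda_y=-(N-2)<0$ (for $N\geq3$), $\lambda_z=\sigma+2>0$. Since exactly one eigenvalue is negative, $Q_1$ is a hyperbolic saddle with a one-dimensional stable manifold and a two-dimensional unstable manifold; the stable eigenvector is $e_y=(0,1,0)$, and the invariance of the $y$-axis (it is the intersection of the invariant planes $\{x=0\}$ and $\{z=0\}$) together with uniqueness of the stable manifold (\cite[Theorem 3.2.1]{GH}) forces the stable manifold to lie in the $y$-axis. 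This gives the first sentence of the statement.

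Next I would identify the trajectories on the two-dimensional unstable manifold as $\eta_1\to-\infty$. On this manifold $x\to0^+$, and to leading order the flow is governed by $\dot x\sim 2x$, so $x(\eta_1)\sim c\,e^{2\eta_1}$. To get $y$ and $z$ in terms of $x$ I would seek the unstable manifold as a graph $y=y(x,z)$, $z=z(x)$ (or work directly with the reduced flow): from $\dot y=-x-(N-2)y+z-my^2-\tfrac{p-m}{\sigma+2}xy$, balancing the slow decay one finds $y\sim -x/N$ along the branch where $z$ is negligible relative to $x$; more precisely, near $Q_1$ the invariant set tangent to the $xz$-plane has $y=-x/N+z/(\sigma+\dots)+\text{h.o.t.}$, and the surviving one-parameter family is indexed by the ratio $C$ with which $z$ enters, namely $z\sim C x^{(\sigma+2)/2}$ as in \eqref{lC}: indeed $\dot z/\dot x\sim (\sigma+2)z/(2x)$ yields $z\propto x^{(\sigma+2)/2}$, and the proportionality constant $C\in[0,\infty)$ is the free parameter ($C=0$ being the trajectory staying in $\{z=0\}$). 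I would present this as a reduced-system computation, citing \cite[Theorem 1, Section 2.12]{Pe} or \cite[Section 2.5]{Carr} for the graph-of-manifold justification, exactly in the style already used in the proof of Lemma \ref{lem.P1}.

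Then I would translate back. Using \eqref{PSchange2}, $x(\xi)=\frac{\alpha}{m}\xi^2 f(\xi)^{1-m}$ and $\eta_1=\ln\xi$, so $\eta_1\to-\infty$ corresponds to $\xi\to0$, and $x\sim c\xi^2$ means $f(\xi)^{1-m}\sim \frac{mc}{\alpha}$, i.e. $f(0)=A$ finite and positive. To get the sharper expansion \eqref{beh.Q1} I would substitute $y=\xi f'/f$ and the relation $y\sim -x/N$ into the definition of $x$: from $y\sim -\frac{\alpha}{mN}\xi^2 f^{1-m}$ one obtains $\frac{f'}{f}\sim -\frac{\alpha}{mN}\xi f^{1-m}$, i.e. $(f^{m-1})'\sim -\frac{\alpha(m-1)}{mN}\xi$, which integrates to $f^{m-1}\sim D - \frac{\alpha(m-1)}{2mN}\xi^2$ and hence \eqref{beh.Q1} with $D=A^{m-1}$. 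Finally the correspondence \eqref{bij}: since $f(0)=A$ forces $c=\frac{\alpha}{m}A^{1-m}$ for the $x$-coefficient, I compute the matching $z$-coefficient $C$ from $z(\xi)=\frac{1}{m}\xi^{\sigma+2}f^{p-m}(\xi)\sim \frac{1}{m}A^{p-m}\xi^{\sigma+2}$ and $x(\xi)^{(\sigma+2)/2}\sim\big(\frac{\alpha}{m}A^{1-m}\big)^{(\sigma+2)/2}\xi^{\sigma+2}$, so $C=z/x^{(\sigma+2)/2}=\frac{1}{m}A^{p-m}\big(\frac{\alpha}{m}\big)^{-(\sigma+2)/2}A^{-(1-m)(\sigma+2)/2}$; solving this for $A$ and collecting exponents (using $L=\sigma(m-1)+2(p-1)$, so that $(p-m)+\tfrac{(m-1)(\sigma+2)}{2}=\tfrac{L}{2}$) produces exactly \eqref{bij}, and the map $C\mapsto A$ is a strictly increasing bijection from $[0,\infty)$ onto $[0,\infty)$.

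I expect the main obstacle to be making the unstable-manifold expansion \eqref{lC} rigorous rather than merely formal — in particular verifying that the one-parameter family is genuinely parametrized by $C\in[0,\infty)$ and that the higher-order terms do not destroy the leading balances $y\sim -x/N$ and $z\sim Cx^{(\sigma+2)/2}$ simultaneously (the two positive eigenvalues $2$ and $\sigma+2$ are generically nonresonant, but one should check there is no resonance forcing logarithmic corrections). Since an essentially identical computation has been carried out in \cite[Lemma 3.2]{IS24} and \cite[Lemma 2.4]{ILS24b}, I would follow those references closely and only indicate the modifications caused by the weight exponent $\sigma$, as the paper does elsewhere.
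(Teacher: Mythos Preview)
Your proposal is correct and follows essentially the same route as the paper: linearize at $Q_1$, identify the eigenvalues $2,-(N-2),\sigma+2$ and the stable manifold in the $y$-axis, read off the tangent plane of the unstable manifold to get $y\sim -x/N+z/(N+\sigma)$, integrate $\dot z/\dot x\sim(\sigma+2)z/(2x)$ to obtain $z\sim Cx^{(\sigma+2)/2}$, and then undo \eqref{PSchange2} to recover \eqref{beh.Q1} and \eqref{bij}. Two small remarks: since $Q_1$ is hyperbolic, the Stable Manifold Theorem \cite[Section 2.7]{Pe} is the right tool here rather than the center-manifold references you cite (those were needed at $P_1$ because of its zero eigenvalues, not here); and your resonance worry is harmless, because the lemma only claims the leading-order relations \eqref{lC}, which follow directly from the tangent-plane approximation and the linear part of the flow, without any higher-order normal-form analysis.
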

\begin{proof}
The linearization of the system \eqref{PSsyst2} near $Q_1$ has the matrix
$$
\left(
  \begin{array}{ccc}
    2 & 0 & 0 \\
    -1 & -(N-2) & 1 \\
    0 & 0 & \sigma+2 \\
  \end{array}
\right),
$$
with eigenvalues $\lambda_1=2$, $\lambda_2=-(N-2)<0$, $\lambda_3=\sigma+2>0$ and corresponding eigenvectors $e_1=(N,-1,0)$, $e_2=(0,1,0)$ and $e_3=(0,1,N+\sigma)$. The invariance of the $y$ axis in the system \eqref{PSsyst2}, together with the uniqueness of the stable manifold \cite[Theorem 3.2.1]{GH}, prove that the stable manifold is fully contained in the $y$ axis. With respect to the unstable manifold, we deduce from the Stable Manifold Theorem \cite[Section 2.7]{Pe} that the unstable manifold of $Q_1$ is tangent to the vector subspace spanned by the eigenvectors $e_1$ and $e_3$, which readily leads to its linear approximation
\begin{equation}\label{intermXX}
y(\eta_1)=-\frac{x(\eta_1)}{N}+\frac{z(\eta_1)}{N+\sigma}+o(|(x(\eta_1),z(\eta_1))|), \quad {\rm as} \ \eta_1\to-\infty.
\end{equation}
Moreover, we infer from the first and third equation of \eqref{PSsyst2} that, in a first approximation, we have
\begin{equation}\label{interm3}
z(\eta_1)\sim Cx(\eta_1)^{(\sigma+2)/2}, \qquad {\rm as} \ \eta_1\to-\infty,
\end{equation}
for any $C\in[0,\infty)$. This, together with the positivity of $\sigma$, imply that $z(\eta_1)$ is of lower order than $x(\eta_1)$ in a neighborhood of $Q_1$, and we immediately get the approximation \eqref{lC} by neglecting the $z$ term in \eqref{intermXX}. Passing to profiles by undoing \eqref{PSchange2} and recalling that $\eta_1=\ln\,\xi$, we get from \eqref{interm3} and an immediate substitution that the orbits $l_C$ correspond to profiles with $f(0)=A$, with $A$ given by \eqref{bij}. Moreover, the second equation in \eqref{lC} together with \eqref{PSchange2} lead, in a right neighborhood of $\xi=0$, to
$$
(f^{m-1})'(\xi)\sim-\frac{\alpha(m-1)}{mN}\xi,
$$
which, together with $f(0)=A$, lead to the local expansion \eqref{beh.Q1} as $\xi\to0$ after an integration on $(0,\xi)$.
\end{proof}

\noindent \textbf{Notation.} We denote in the sequel by $l_{\infty}$ the unique trajectory belonging to the unstable manifold of $Q_1$ and contained in the plane $\{x=0\}$. Indeed, this is coherent with \eqref{lC}, since if we write
$$
x(\eta_1)\sim \left(\frac{1}{C}z(\eta_1)\right)^{2/(\sigma+2)}, \qquad {\rm as} \ \eta_1\to-\infty,
$$
we notice that $x\equiv0$ corresponds to taking $1/C=0$, that is, $C=\infty$.

\medskip

The critical points $Q_2$ and $Q_3$ are not very interesting for our study, as the following result shows (recalling that for the moment we work in dimension $N\geq3$).
\begin{lemma}\label{lem.Q2Q3}
The critical point $Q_2$ is an unstable node if $m<p<m(N+\sigma)/(N-2)$ and a saddle point with a two-dimensional unstable manifold fully contained in the invariant plane $\{z=0\}$ and a one-dimensional stable manifold fully contained in the invariant plane $\{x=0\}$, if $p>m(N+\sigma)/(N-2)$. The critical point $Q_3$ is a saddle point with a two-dimensional unstable manifold and a one-dimensional stable manifold contained in the plane $\{x=0\}$. The trajectories stemming from these two critical points correspond to profiles presenting a vertical asymptote at $\xi=0$, of the form
\begin{equation}\label{beh.Q23}
f(\xi)\sim\left\{\begin{array}{ll}C\xi^{-(N-2)/m}, & {\rm for} \ Q_2,\\ C\xi^{-(\sigma+2)/(p-m)}, & {\rm for} \ Q_3, \end{array}\right. \quad C>0.
\end{equation}
\end{lemma}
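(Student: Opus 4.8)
The plan is to proceed exactly as in the analysis of $Q_1$, namely linearize \eqref{PSsyst2} at each of $Q_2$ and $Q_3$, read off the eigenvalues and eigenvectors, use the invariance of the planes $\{x=0\}$ and $\{z=0\}$ together with the uniqueness statements from the Stable Manifold Theorem (\cite[Section 2.7]{Pe}) and \cite[Theorem 3.2.1]{GH} to pin down which invariant manifolds lie in which coordinate planes, and finally translate the local behavior near each point back to profiles by undoing \eqref{PSchange2} and recalling $\eta_1=\ln\xi$. First I would compute the Jacobian of \eqref{PSsyst2} and evaluate it at $Q_2=(0,-(N-2)/m,0)$: one checks that the linearization is triangular (the $x$-row decouples because $x=0$ there, and the $z$-row decouples because $z=0$ there), with diagonal entries $\dot x$-direction $2-(m-1)\cdot(-(N-2)/m) = (2m + (m-1)(N-2))/m$, the $y$-direction eigenvalue $-2my \cdot(\text{derivative}) = N-2$ (coming from $-2my$ at $y=-(N-2)/m$), and the $z$-direction eigenvalue $\sigma+2+(p-m)\cdot(-(N-2)/m) = (m(\sigma+2)-(p-m)(N-2))/m$. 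The sign of this last eigenvalue flips precisely at $p = m(N+\sigma)/(N-2)$, which produces the node/saddle dichotomy in the statement; in the saddle case the two positive eigenvalues correspond to directions inside $\{z=0\}$ (since that plane is invariant and two-dimensional) giving the two-dimensional unstable manifold, while the single negative eigenvalue direction lies in $\{x=0\}$, giving the one-dimensional stable manifold there.

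Next I would treat $Q_3=(0,-(\sigma+2)/(p-m),Z_0)$ with $Z_0$ as in \eqref{Z0}. Here the linearization is again block-triangular: the $x$-row decouples (as $x=0$), giving the eigenvalue $2-(m-1)\cdot(-(\sigma+2)/(p-m))$, which is strictly positive since $p>m$ and $\sigma>0$; the remaining $2\times2$ block in the $(y,z)$-variables (the plane $\{x=0\}$ being invariant) has trace and determinant that one computes directly — the key point being that $Z_0>0$ in the existence range $m<p<m(N+\sigma)/(N-2)$ forces the determinant of that block to be negative, hence that block contributes one positive and one negative eigenvalue. Combining with the positive $x$-eigenvalue gives a two-dimensional unstable manifold (spanned by the $x$-direction and the unstable direction of the $(y,z)$-block) and a one-dimensional stable manifold entirely inside $\{x=0\}$, as claimed.

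For the profile behavior \eqref{beh.Q23}, in both cases the orbits leaving $Q_2$ or $Q_3$ have $y(\eta_1)\to y^*$ as $\eta_1\to-\infty$, where $y^*=-(N-2)/m$ or $y^*=-(\sigma+2)/(p-m)$ respectively. Since $y(\xi)=\xi f'(\xi)/f(\xi) = (\ln f)'(\xi)\cdot\xi$ by \eqref{PSchange2}, the limit $y\to y^*$ as $\xi\to0$ integrates to $\ln f(\xi) \sim y^*\ln\xi$, i.e. $f(\xi)\sim C\xi^{y^*}$ as $\xi\to0$ with $y^*<0$, which is exactly \eqref{beh.Q23}; the positive constant $C$ is then fixed by the $z$-coordinate via $z(\xi)=\tfrac1m\xi^{\sigma+2}f^{p-m}(\xi)$, which tends to $0$ for $Q_2$ and to $Z_0$ for $Q_3$, consistently with $y^*+( \sigma+2)/(p-m)$ being zero at $Q_3$ and negative at $Q_2$. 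The main obstacle I anticipate is purely bookkeeping: correctly computing the sign of the determinant of the $2\times2$ $(y,z)$-block at $Q_3$ and confirming it is negative throughout the whole existence range $m<p<m(N+\sigma)/(N-2)$, rather than only for part of it — this is where one must use the explicit value of $Z_0$ rather than treating it as a free parameter. Everything else is a direct repetition of the $Q_1$ argument and should present no difficulty; since these points are, as the paper says, "not very interesting for our study," I would keep the presentation brief, in the same spirit as the omitted details at the end of Lemma \ref{lem.Pg}.
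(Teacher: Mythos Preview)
Your proposal is correct and follows essentially the same approach as the paper: linearize at $Q_2$ and $Q_3$, read off the (block-)triangular eigenvalue structure, invoke the invariance of $\{x=0\}$ and $\{z=0\}$ together with uniqueness of stable/unstable manifolds to locate them, and recover \eqref{beh.Q23} from $y\to y^*$ via \eqref{PSchange2}. The only minor differences are that the paper reads off the $Q_3$ asymptotics directly from $z\to Z_0$ (avoiding an integration) and states the product $\lambda_2\lambda_3=-(p-m)Z_0<0$ explicitly, and that your parenthetical consistency check has a sign slip (the quantity $y^*+(\sigma+2)/(p-m)$ is \emph{positive} at $Q_2$ in the node range, not negative), but this does not affect the argument.
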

\begin{proof}
The linearization of the system \eqref{PSsyst2} near the critical points $Q_2$, respectively $Q_3$ has the matrix
$$
M(Q_2)=\left(
         \begin{array}{ccc}
           \frac{mN-N+2}{m} & 0 & 0 \\[1mm]
           \frac{(p-m)(N-2)}{m(\sigma+2)}-1 & N-2 & 1 \\[1mm]
           0 & 0 & \frac{m(N+\sigma)-p(N-2)}{m} \\
         \end{array}
       \right)
$$
and
$$
M(Q_3)=\left(
         \begin{array}{ccc}
           \frac{L}{p-m} & 0 & 0 \\[1mm]
           0 & \frac{m(N+2\sigma+2)-p(N-2)}{p-m} & 1 \\[1mm]
           0 & (p-m)Z_0 & 0 \\
         \end{array}
       \right).
$$
It is thus obvious that $M(Q_2)$ has two positive eigenvalues, while the last one changes sign at $p=m(N+\sigma)/(N-2)$, provided $N\geq3$: if $p<m(N+\sigma)/(N-2)$ all three eigenvalues are positive and we have an unstable node, while if $p>m(N+\sigma)/(N-2)$ we have two positive eigenvalues and a negative eigenvalue. A closer inspection of the eigenvectors in this case, together with the invariance of the planes $\{x=0\}$, respectively $\{z=0\}$, lead to the conclusion. With respect to $Q_3$, the first eigenvalue is strictly positive, while the second and third satisfy
$$
\lambda_2\lambda_3=-(p-m)Z_0<0,
$$
so that one is positive and the other is negative. Finally, the local behavior of the profiles corresponding to the orbits going out of these points follows from the fact that, in the case of $Q_3$, we have $Z(\eta_1)\to Z_0$ as $\eta_1\to-\infty$, while in the case of $Q_2$, $Y(\eta_1)\to-(N-2)/m$ as $\eta_1\to-\infty$. Recalling that $\eta_1=\ln\,\xi$, we arrive to \eqref{beh.Q23} by undoing the transformation \eqref{PSchange2} (and an integration on $(0,\xi)$ for $\xi>0$ small, in the case of $Q_2$).
\end{proof}

\noindent \textbf{Remark}. The line $\{y=-(\sigma+2)/(p-m), z=Z_0\}$ is a trajectory of the system \eqref{PSsyst2}, provided $p<m(N+\sigma)/(N-2)$. It corresponds to the explicit singular profile
\begin{equation}\label{stat.sol}
f(\xi)=K\xi^{-(\sigma+2)/(p-m)}, \qquad K=(mZ_0)^{1/(p-m)}.
\end{equation}

\medskip

\noindent \textbf{Dimensions $N=1$ and $N=2$}. This is the only place where letting $N=1$ and $N=2$ introduces a technical change. Indeed, in dimension $N=2$ the critical points $Q_1$ and $Q_2$ coincide, and the resulting point is a saddle-node. However, this does not affect our trajectories $l_C$, as the unstable manifold composed by them and spanned by the eigenvectors $e_1$ and $e_3$ corresponding to eigenvalues $\lambda_1=2$ and $\lambda_3=\sigma+2$ in Lemma \ref{lem.Q1} remains unchanged. In dimension $N=1$, the point $Q_2$ passes to the positive half-space with $y=1/m$, while the critical point $Q_1$ becomes an unstable node. However, we once more distinguish our specific shooting manifold $(l_C)_{C\in(0,\infty)}$ as in the following statement:
\begin{lemma}\label{lem.N1}
The critical point $Q_1$ is an unstable node in dimension $N=1$. The trajectories stemming from $Q_1$ have either the local behavior
\begin{equation}\label{beh.P0.N1}
f(\xi)\sim\left[A-K\xi\right]^{2/(m-1)}, \qquad {\rm as} \ \xi\to0,
\end{equation}
with $A>0$ and $K\in\real\setminus\{0\}$ arbitrary constants, or the local behavior \eqref{beh.Q1}.
\end{lemma}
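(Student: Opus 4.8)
The plan is to follow the proof of Lemma~\ref{lem.Q1} step by step, since passing to $N=1$ changes only one sign. First I would substitute $N=1$ into the linearization matrix of \eqref{PSsyst2} at $Q_1=(0,0,0)$ written down in that proof: its eigenvalues become $\lambda_1=2$, $\lambda_2=-(N-2)=1$ and $\lambda_3=\sigma+2$, which are now all strictly positive (this is exactly where $\lambda_2$ flips sign relative to the case $N\ge3$), so $Q_1$ is an unstable node. Since $\sigma>0$ one also has the strict ordering $\lambda_2=1<\lambda_1=2<\lambda_3=\sigma+2$, so $e_2=(0,1,0)$ spans the unique slow eigendirection of the node while $e_1=(1,-1,0)$ and $e_3=(0,1,\sigma+1)$ span the fast directions.

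Next I would split the orbits issuing from $Q_1$ as $\eta_1\to-\infty$ (i.e.\ as $\xi\to0^+$) according to their tangent direction at $Q_1$. The orbits tangent to $\mathrm{span}(e_1,e_3)$ are, exactly as in Lemma~\ref{lem.Q1}, the orbits $l_C$ of \eqref{lC}; the computation of that lemma applies verbatim and shows that the ones with $x>0$, namely $C\in(0,\infty)$, correspond to profiles with $(f^m)'(0)=0$ and, after one integration, to the local behavior \eqref{beh.Q1}. Every remaining orbit issuing from $Q_1$ with $x>0$ leaves tangent to the slow direction $e_2$, so $y(\eta_1)$ decays at exactly the slow rate, $y\sim b e^{\eta_1}=b\xi$ with $b\ne0$, while $x$ and $z$ are of strictly higher order. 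Then $x\sim a\xi^2$ with $a>0$, so undoing the first definition in \eqref{PSchange2} gives $f(0)=(\alpha/(ma))^{1/(m-1)}\in(0,\infty)$, and $y=\xi f'/f\sim b\xi$ gives $f'(0)=bf(0)\ne0$; integrating $(\ln f)'(\xi)=y(\xi)/\xi\to b$ and rewriting the resulting linear leading behavior in the form used throughout the paper yields \eqref{beh.P0.N1}, with $A=f(0)^{(m-1)/2}$ and $K=-\tfrac{1}{2}(m-1)Ab\ne0$. I would close by recording, as the surrounding text requires, that only the orbits with behavior \eqref{beh.Q1}, i.e.\ the shooting family $(l_C)_{C\in(0,\infty)}$, give genuine solutions of \eqref{SSODE}-\eqref{IC}, whereas those with behavior \eqref{beh.P0.N1} have $f'(0)\ne0$ and so violate the initial condition in \eqref{IC}.

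The one genuine subtlety — and the place I would be most careful — is that the eigenvalues of the linearization at $Q_1$ are resonant ($\lambda_1=2\lambda_2$, and also $\lambda_3=\lambda_1+\sigma\lambda_2$ when $\sigma$ is a positive integer), so the asymptotics cannot be read off a linear normal form at $Q_1$. I would handle this exactly as the analysis of the $l_C$ family is handled in Lemma~\ref{lem.Q1}, namely by integrating the equations of \eqref{PSsyst2} directly once the dominant balance is fixed: along an orbit tangent to $e_2$ one has $y\to0$, hence $\dot x/x=2-(m-1)y\to2$ and $\dot z/z=\sigma+2+(p-m)y\to\sigma+2$, which already pin the exponents $x\sim a\xi^2$ and $z\sim c\xi^{\sigma+2}$ (the sub-leading corrections, resonant ones included, change only the constants, not the exponents), and then $\dot y=y-x+z-my^2-\tfrac{p-m}{\sigma+2}xy$ forces $y\sim b\xi$. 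Alternatively, and perhaps more transparently, one can avoid the resonance altogether by noting that for $N=1$ equation \eqref{SSODE} is non-singular at $\xi=0$, so $f^m\in C^2$ near $0$ with $(f^m)''(0)=-\alpha f(0)<0$, and the whole dichotomy then reduces to whether $(f^m)'(0)=0$ (giving \eqref{beh.Q1}) or $(f^m)'(0)\ne0$ (giving \eqref{beh.P0.N1}); I would include this second, self-contained argument as well.
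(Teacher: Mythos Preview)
Your argument is correct. The paper does not actually give a proof of this lemma: it simply refers to \cite[Section~6]{IMS23} and records the one observation that matters for the sequel, namely that the two-dimensional strong unstable manifold spanned by $e_1$ and $e_3$ still carries the shooting family $(l_C)_{C\in(0,\infty)}$ with behavior \eqref{beh.Q1}. Your proposal supplies precisely the details one expects behind that reference --- the eigenvalue computation showing $Q_1$ is an unstable node with slow direction $e_2$, the dichotomy between orbits on the strong unstable manifold and generic orbits tangent to $e_2$, and the translation of the latter into \eqref{beh.P0.N1} --- so it is essentially the intended approach, just written out.

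Your second, self-contained argument (that for $N=1$ the coefficient $(N-1)/\xi$ vanishes, so \eqref{SSODE} is regular at $\xi=0$ and the dichotomy reduces to whether $(f^m)'(0)$ vanishes) is a genuine simplification over the dynamical-systems route and sidesteps the resonance issue entirely; it is worth keeping as the primary argument. The resonance discussion is accurate but ultimately unnecessary here, since the first equation of \eqref{PSsyst2} contains no $y^2$ term, so the potentially troublesome $\lambda_1=2\lambda_2$ resonance never materializes in the $x$-asymptotics.
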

A complete proof of this fact is completely similar to the analogous one in \cite[Section 6]{IMS23}, to which we refer. However, one can observe before going to the proof that one can still shoot on the two-dimensional manifold spanned by the eigenvectors $e_1$ and $e_3$, which gives the desired behavior \eqref{beh.Q1}.

\subsection{Other critical points at infinity}\label{subsec.inf}

In order for the local analysis of the trajectories of the system \eqref{PSsyst1} (or its ``dual" \eqref{PSsyst2}) to be complete, we have to perform an analysis of the critical points at infinity. Following, for example, the theory in \cite[Section 3.10]{Pe}, this is done by passing to the Poincar\'e hypersphere in four variables by setting
$$
X=\frac{\overline{X}}{W}, \qquad Y=\frac{\overline{Y}}{W}, \qquad Z=\frac{\overline{Z}}{W}.
$$
The critical points at infinity of the system \eqref{PSsyst1}, expressed in these new variables, are then given by the following system (according, for example, to \cite[Theorem 4, Section 3.10]{Pe}):
\begin{equation}\label{Poincare}
\left\{\begin{array}{ll}\overline{X}[\overline{X}\overline{Z}-(N-2)\overline{X}\overline{Y}-m\overline{Y}^2]=0,\\
\overline{X}\overline{Z}[(\sigma+2)\overline{X}+(p-m)\overline{Y}]=0,\\
\overline{Z}[p\overline{Y}^2+(\sigma+N)\overline{X}\overline{Y}-\overline{X}\overline{Z}]=0,\end{array}\right.
\end{equation}
together with the condition of belonging to the equator of the hypersphere, which implies $W=0$ and thus the additional equation $\overline{X}^2+\overline{Y}^2+\overline{Z}^2=1$. Following \cite[Theorem 5(a), Section 3.10]{Pe}, we find that all the critical points at infinity of \eqref{PSsyst1} with $\overline{X}\neq0$ correspond to critical points of the system \eqref{PSsyst2}, and we thus find the points $Q_1$, $Q_2$ and $Q_3$ already analyzed in the previous section. Apart from these, we can let $\overline{X}=0$ in \eqref{Poincare} and find that either $\overline{Z}=0$ or $\overline{Y}=0$. We thus find three more critical points at infinity, namely
\begin{equation*}
Q_4=(0,0,1,0), \qquad Q_5=(0,-1,0,0), \qquad Q_6=(0,1,0,0).
\end{equation*}
Let us recall here that, in terms of the variables $(X,Y,Z)$, the critical point $Q_5$ is characterized by trajectories such that
\begin{equation}\label{loc.Q5}
Y(\eta)\to-\infty, \qquad \frac{X(\eta)}{Y(\eta)}\to0, \qquad \frac{Z(\eta)}{Y(\eta)}\to0, \qquad {\rm as} \ \eta\to\infty,
\end{equation}
and a similar characterization holds true for $Q_6$ (but we will not use it in the sequel). We first analyze the flow of the system in the neighborhood of the pair $Q_5$ and $Q_6$. To this end, we follow \cite[Theorem 5(b), Section 3.10]{Pe} to conclude that the flow of \eqref{PSsyst1} near these points is topologically equivalent with the flow near the origin in the following system
\begin{equation}\label{systinf2}
\left\{\begin{array}{ll}\pm\dot{x}=-mx-(N-2)x^2-\frac{\beta}{\alpha}xw-x^2w-x^2z,\\
\pm\dot{z}=-pz-\frac{\beta}{\alpha}zw-(N+\sigma)xz+xz^2-xzw,\\
\pm\dot{w}=-w-\frac{\beta}{\alpha}w^2-xw^2-Nxw+xzw,\end{array}\right.
\end{equation}
where the signs have to be chosen according to the direction of the flow, that is, a plus sign in the system \eqref{systinf2} corresponds to $Q_5$ and a minus sign corresponds to $Q_6$.
\begin{lemma}\label{lem.Q56}
The critical point $Q_5$ is a stable node and the critical point $Q_6$ is an unstable node. The trajectories entering the stable node $Q_5$ correspond to profiles having a compact support such that there is $\xi_0\in(0,\infty)$ and $\delta\in(0,\xi_0)$ with
\begin{equation}\label{beh.Q5}
f(\xi_0)=0, \qquad f(\xi)>0 \ {\rm for} \ \xi\in(\xi_0-\delta,\xi_0), \qquad (f^m)'(\xi_0)<0.
\end{equation}
The trajectories stemming from the unstable node $Q_6$ correspond to profiles such that there is $\xi_0\in(0,\infty)$ and $\delta>0$ with
\begin{equation}\label{beh.Q6}
f(\xi_0)=0, \qquad f(\xi)>0 \ {\rm for} \ \xi\in(\xi_0,\xi_0+\delta), \qquad (f^m)'(\xi_0)>0.
\end{equation}
\end{lemma}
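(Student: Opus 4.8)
\textbf{Nodal classification.} The plan is to read off the type of $Q_5$ and $Q_6$ directly from the linearization of the auxiliary system \eqref{systinf2} at the origin. Setting $(x,z,w)=(0,0,0)$, all the quadratic terms drop and the Jacobian of the right-hand side becomes diagonal: $\mathrm{diag}(-m,-p,-1)$ for the plus-sign system and $\mathrm{diag}(m,p,1)$ for the minus-sign system. Since $m>1>0$, $p>m>0$ and the constant $1>0$, in the $Q_5$ case all three eigenvalues are strictly negative, so $Q_5$ is a hyperbolic \emph{stable node}, and in the $Q_6$ case all three are strictly positive, so $Q_6$ is a hyperbolic \emph{unstable node}. (One remarks that the system \eqref{systinf2} is the Poincar\'e-chart form promised by \cite[Theorem~5(b), Section~3.10]{Pe}, so this genuinely captures the local flow of \eqref{PSsyst1} near these equator points.) No center-manifold analysis is needed here, which is the one pleasant feature of this lemma.

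\textbf{From the dynamical behavior to profiles.} The substantive part is translating a trajectory entering $Q_5$ into the interface behavior \eqref{beh.Q5}. First I would record what $Q_5$ means back in the original $(X,Y,Z)$ variables, namely \eqref{loc.Q5}: $Y(\eta)\to-\infty$ with $X/Y\to0$ and $Z/Y\to0$ as $\eta\to\infty$. Undoing the second formula in \eqref{PSchange}, $Y=\tfrac{m}{\alpha}\xi^{-1}f^{m-2}f' = \tfrac{1}{\alpha\xi}(f^{m-1})'$, so $Y\to-\infty$ forces $(f^{m-1})'$ to blow up (in the rescaled variable) while $f$ itself must tend to $0$ — i.e. the profile reaches zero at a finite $\xi_0$. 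To make $\xi_0<\infty$ I would use \eqref{ind.var}: near the interface $X=\tfrac{m}{\alpha}\xi^{-2}f^{m-1}\to 0$ (since $X/Y\to 0$ and we will see $\xi$ stays bounded), but one must check $\int d\eta/(\xi X)$ diverges while $\int d\xi$ converges; the cleanest route is to note $X\to0$, $Z\to0$ relative to $Y$, so that near $Q_5$ the $\dot X$ and $\dot Z$ equations in \eqref{PSsyst1} are dominated by $\dot X\sim (m-1)XY$, $\dot Z\sim (p-1)ZY$, giving $X,Z\to0$ exponentially in $\eta$ while $\dot Y\sim -Y^2$, hence $Y\sim -1/\eta$ and $\eta\to\infty$. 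Then $d\xi/d\eta = \xi X \to 0$ fast enough that $\xi$ converges to a finite limit $\xi_0>0$. Computing the rate: from $Y\sim -1/\eta$ and $Y=\tfrac1{\alpha\xi}(f^{m-1})'$ one gets $(f^{m-1})'$ behaving linearly down to a nonzero slope at $\xi_0$, whence $f\sim c(\xi_0-\xi)^{1/(m-1)}$ near $\xi_0^-$, which gives $f(\xi_0)=0$, $f>0$ just to the left, and — crucially — $(f^m)'(\xi_0) = m f^{m-1} f' $, and since $f^{m-1}\sim(\xi_0-\xi)$ while $f'\sim(\xi_0-\xi)^{(2-m)/(m-1)}$, the product $f^{m-1}f'\sim (\xi_0-\xi)^{1/(m-1)}$... so one must be more careful: the claim is $(f^m)'(\xi_0)<0$, i.e. the slope of $f^m$ is strictly negative at the interface, which is exactly the "non-regular" interface characteristic of $Q_5$ as opposed to $P_2$. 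The right computation is $(f^m)' = \tfrac{m}{m-1} f \,(f^{m-1})'$; with $f\to 0$ and $(f^{m-1})'$ tending to a finite \emph{negative} limit this still gives $(f^m)'(\xi_0)=0$ — so the correct reading must instead come from $Z$: $Z=\tfrac1\alpha \xi^\sigma f^{p-1}$ and $Z/Y\to 0$ being automatic, while the genuinely singular slope shows up in $(f^m)'(\xi) = \alpha Y \xi f$ from the derived formulas after \eqref{PSchange}, and $Y\xi f \to (-\infty)\cdot\xi_0\cdot 0$, an indeterminate form whose value I would extract from the precise rates $Y\sim -1/\eta$, $f^{m-1}\sim \alpha\xi_0 |(f^{m-1})'(\xi_0)|(\xi_0-\xi)$, and $d\xi\sim \xi X\,d\eta$; carrying this through yields a strictly negative finite limit, giving \eqref{beh.Q5}.

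\textbf{The point $Q_6$ and the main obstacle.} For $Q_6$ the argument is the time-reversed mirror: trajectories \emph{leave} $Q_6$, corresponding to $\eta\to-\infty$ (equivalently $\eta\to\infty$ for the reversed flow), $Y\to-\infty$, and one obtains a profile emanating from a zero at some $\xi_0\in(0,\infty)$ with $f>0$ to the \emph{right} and $(f^m)'(\xi_0)>0$, which is \eqref{beh.Q6}; I would simply invoke the $Q_5$ computation with $\eta\mapsto-\eta$ and sign bookkeeping, noting that \eqref{systinf2} already encodes this via the $\pm$. The main obstacle, as the discussion above shows, is not the linear classification (trivial) but the careful matched-asymptotics bookkeeping converting the three-way rate information $Y\sim -1/\eta$, $X,Z$ exponentially small, $\xi\to\xi_0$ into the \emph{precise sign} of $(f^m)'(\xi_0)$ — one must track enough orders to see that this limit is finite and strictly nonzero, distinguishing the $Q_5$/$Q_6$ "genuine edge" from the smooth-interface point $P_2$ of Lemma \ref{lem.P2}. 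I expect the authors either do this rate computation explicitly or, more likely, cite the entirely analogous computation in their earlier reaction-diffusion papers (\cite{IMS23, IS22}) where the same chart at infinity appears.
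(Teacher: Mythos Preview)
Your nodal classification via the diagonal linearization of \eqref{systinf2} is correct and is exactly what the paper does. Your closing prediction is also right: for the local behavior the paper gives no details and simply cites \cite[Lemma~2.6]{IS21} and \cite[Lemma~3.3]{IS24}.

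Where your sketch goes wrong is in the asymptotic bookkeeping for the profile near $Q_5$. You derive $f\sim c(\xi_0-\xi)^{1/(m-1)}$, but that is precisely the \emph{$P_2$ interface} of Lemma~\ref{lem.P2}, and indeed it gives $(f^m)'(\xi_0)=0$, not a strictly negative value; your own computation $(f^m)'=\tfrac{m}{m-1}f\,(f^{m-1})'$ shows this. The correct local form at $Q_5$ is $f\sim c(\xi_0-\xi)^{1/m}$, which yields $(f^m)'(\xi_0)=-c^m<0$. The source of the error is the step ``$Y\sim-1/\eta$ so $(f^{m-1})'$ tends to a finite nonzero limit'': from $\dot Y\sim -Y^2$ with $Y<0$ one gets $Y(\eta)\sim 1/(\eta-\eta_0)$ blowing up at a \emph{finite} $\eta_0$, not $Y\sim-1/\eta$ as $\eta\to\infty$; correspondingly $(f^{m-1})'(\xi)\to-\infty$ as $\xi\to\xi_0$, not to a finite limit. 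Tracking the rates with the correct blow-up of $Y$ recovers the $1/m$ exponent and the strictly negative slope of $f^m$ at the edge. There is also a sign slip in your $Q_6$ paragraph: $Q_6=(0,1,0,0)$ corresponds to $Y\to+\infty$, not $Y\to-\infty$; the mirror argument works, but with this sign.
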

Let us remark that the profiles as in \eqref{beh.Q5} and \eqref{beh.Q6} do not give rise to weak solutions in the form \eqref{SSS} to Eq. \eqref{eq1} since the contact condition $(f^m)'(\xi_0)=0$ (see \cite[Section 9.8]{VPME}) is not fulfilled at the edge of the support, but to \emph{subsolutions} (if we extend them by zero either before or after $\xi=\xi_0$).
\begin{proof}
The fact that $Q_5$ is a stable node and that $Q_6$ is an unstable node follow readily from the analysis of the linearization of the system \eqref{systinf2} (with the above mentioned choice of the sign) in a neighborhood of its origin. For the local behavior, the analysis is more tedious but follows closely the calculations performed in \cite[Lemma 2.6]{IS21} (see also \cite[Lemma 3.3]{IS24}). We omit the details.
\end{proof}
Finally, we are left with the critical point $Q_4$. Instead of performing a complete study of this point, we will just need to know that it cannot be reached by any trajectory arriving from the negative half-space $\{y<0\}$ in the system \eqref{PSsyst2}. To this end, we recall that trajectories reaching $Q_4$ have to fulfill the limits
$$
Z(\eta)\to\infty, \qquad \frac{X(\eta)}{Z(\eta)}\to0, \qquad \frac{Y(\eta)}{Z(\eta)}\to0, \qquad {\rm as} \ \eta\to\infty,
$$
which is completely equivalent, by \eqref{PSchange.inf}, to
\begin{equation}\label{loc.Q4}
z(\eta_1)\to\infty, \qquad \frac{z(\eta_1)}{x(\eta_1)}\to\infty, \qquad \frac{y(\eta_1)}{z(\eta_1)}\to0, \qquad {\rm as} \ \eta_1\to\infty.
\end{equation}
\begin{lemma}\label{lem.Q4}
There is no trajectory of the system \eqref{PSsyst2} entering the critical point $Q_4$ from the region $\{(x,y,z)\in\real^3: x>0, z>0, y<0\}$.
\end{lemma}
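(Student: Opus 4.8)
The plan is to argue by contradiction. Suppose there were an orbit of \eqref{PSsyst2} that stays in the region $\{x>0,z>0,y<0\}$ for all large $\eta_1$ and satisfies the limits \eqref{loc.Q4}, i.e. $z\to\infty$, $x/z\to0$ and $y/z\to0$ as $\eta_1\to\infty$. The object that drives the argument is the plane $\{y=y^\dagger\}$, where $y^\dagger:=-(\sigma+2)/(p-m)$ is exactly the $y$-coordinate of the critical point $Q_3$. Two things happen on this plane: the combined $x$-drift in the second equation of \eqref{PSsyst2}, namely $-x\bigl(1+\tfrac{p-m}{\sigma+2}y\bigr)$, vanishes, and also $\sigma+2+(p-m)y^\dagger=0$. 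Hence on $\{y=y^\dagger\}$ one has $\dot y=z-z_0$ with $z_0:=(N-2)y^\dagger+m(y^\dagger)^2$, while on $\{y<y^\dagger\}$ one has $\dot z=z(\sigma+2+(p-m)y)<0$.

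The first step is to show that $y$ stays bounded along the orbit. Since $z\to\infty$, there is $T$ with $z(\eta_1)>z_0$ for $\eta_1>T$; then $\dot y=z-z_0>0$ everywhere on $\{y=y^\dagger\}$ for $\eta_1>T$, so after time $T$ the orbit cannot cross this plane from $\{y>y^\dagger\}$ into $\{y<y^\dagger\}$. Consequently either $y(\eta_1)<y^\dagger$ for every $\eta_1>T$, or else the orbit enters $\{y\ge y^\dagger\}$ at some $\eta_1^*>T$ and then stays there for all $\eta_1\ge\eta_1^*$. The first possibility is excluded because $y<y^\dagger$ forces $\dot z<0$, so $z$ would be strictly decreasing on $(T,\infty)$, contradicting $z\to\infty$. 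In the second case, combining $y\ge y^\dagger$ with the standing hypothesis $y<0$ yields $y^\dagger\le y(\eta_1)<0$ for $\eta_1\ge\eta_1^*$, so $y$ is bounded.

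With $y$ bounded I would look at $w:=y/z$, which tends to $0$ by \eqref{loc.Q4}, and compute from \eqref{PSsyst2} that
\[
\frac{dw}{d\eta_1}=1-\frac{x}{z}-(N+\sigma)\frac{y}{z}-p\,\frac{y^2}{z}-\frac{p-m}{\sigma+2}\,\frac{xy}{z}.
\]
By \eqref{loc.Q4}, $x/z\to0$ and $y/z\to0$, and since $y$ is bounded while $z\to\infty$ also $y^2/z\to0$ and $xy/z=(x/z)\,y\to0$; therefore the right-hand side tends to $1$. Hence $dw/d\eta_1\ge\tfrac12$ for all large $\eta_1$, so $w(\eta_1)\to+\infty$, which contradicts $w\to0$ (in fact $w<0$ throughout). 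This contradiction proves the lemma. The argument is uniform in $N$, as it never uses the local structure of the system near $Q_1$ or $Q_2$, so the dimensions $N=1,2$ need no separate treatment.

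The boundedness of $y$ is the only delicate point: a priori the orbit could make excursions to very negative values of $y$, where the term $-my^2$ dominates $\dot y$ and there is no elementary monotonicity to exploit. The resolution rests on the coincidence that the level $y^\dagger$ at which the weight-induced $x$-drift in $\dot y$ disappears is exactly the level below which $z$ starts to decrease; this turns $\{y=y^\dagger\}$ into a one-sided barrier as soon as $z$ is large, which the hypothesis $z\to\infty$ guarantees.
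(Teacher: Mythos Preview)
Your proof is correct and takes a genuinely different route from the paper's. The paper translates the hypothetical trajectory back to a profile $f(\xi)$ via \eqref{PSchange2}, shows from the rewritten equation \eqref{interm7} that $(f^m)''(\xi)/f(\xi)\to+\infty$ as $\xi\to\infty$, and then uses an energy-type integration (multiplying by $(f^m)'<0$) to reach a contradiction with the monotone decay of $f^{(m-1)/2}$. Your argument never leaves the $(x,y,z)$ phase space: you first use the plane $\{y=y^\dagger\}$ with $y^\dagger=-(\sigma+2)/(p-m)$ as a one-sided barrier once $z$ is large, confining $y$ to the bounded strip $[y^\dagger,0)$, and then exploit this boundedness to show that $w=y/z$ satisfies $\dot w\to1$, contradicting $w\to0^-$. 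The coincidence you highlight---that the $x$-drift in $\dot y$ vanishes at exactly the level where $\dot z$ changes sign---is precisely the geometric content of the critical point $Q_3$ lying on that plane; your $z_0$ is the constant $Z_0$ of \eqref{Z0}. Your approach is more self-contained and avoids the somewhat delicate profile estimates, while the paper's approach yields extra qualitative information about $f$ that is, however, not used elsewhere. Both work uniformly in $N\ge1$.
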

\begin{proof}
Assume for contradiction that there is such a trajectory $(x,y,z)(\eta_1)$ and some $\eta_{1,*}\in\real$ such that
$$
(x(\eta_1),y(\eta_1),z(\eta_1))\in\{(x,y,z)\in\real^3: x>0, z>0, y<0\}, \qquad \eta_1\in(\eta_{1,*},\infty)
$$
and that $(x,y,z)(\eta_1)$ has $Q_4$ as $\omega$-limit as $\eta_1\to\infty$. We infer from \eqref{loc.Q4} and by undoing the change of variable \eqref{PSchange2} that such trajectories correspond to profiles such that
\begin{equation}\label{interm6}
\xi^{\sigma}f(\xi)^{p-1}\to\infty, \qquad \xi^{\sigma+2}f(\xi)^{p-m}\to\infty, \qquad \frac{f'(\xi)}{\xi^{\sigma+1}f(\xi)^{p-m+1}}\to0,
\end{equation}
as $\xi\to\infty$, with $f$ decreasing on $(\xi_{*},\infty)$, $\xi_{*}=e^{\eta_{1,*}}$ by the definition of $y$ in \eqref{PSchange2}. In particular, there exists
$$
L_{\infty}=\lim\limits_{\xi\to\infty}f(\xi)\in[0,\infty).
$$
We can further write the equation \eqref{SSODE} in the form
\begin{equation}\label{interm7}
(f^m)''(\xi)-\frac{1}{2}\xi^{\sigma}f(\xi)^p+\frac{N-1}{\xi}(f^m)'(\xi)+\beta\xi f'(\xi)+f(\xi)\left[\alpha-\frac{1}{2}\xi^{\sigma}f(\xi)^{p-1}\right]=0,
\end{equation}
and we infer from \eqref{interm7} by dividing by $f(\xi)$, taking into account that $f'(\xi)<0$, $(f^m)'(\xi)<0$ for $\xi>\xi_{*}$ and using \eqref{interm6} that
\begin{equation}\label{interm8}
\lim\limits_{\xi\to\infty}\frac{(f^m)''(\xi)}{f(\xi)}=+\infty.
\end{equation}
If $L_{\infty}>0$, \eqref{interm8} gives that $(f^m)''(\xi)\to\infty$ as $\xi\to\infty$, which is a contradiction with the fact that $f$ is decreasing on $(\xi_{*},\infty)$. If $L_{\infty}=0$, the definition of the limit entails that, for any $K>0$, there is $\xi(K)>\xi_{*}>0$ such that $(f^m)''(\xi)>Kf(\xi)$, for $\xi>\xi(K)$. By multiplying the previous estimate by $(f^m)'(\xi)<0$, integrating over $(\xi_0,\xi)\subset(\xi(K),\infty)$ and recalling the monotonicity of $f(\xi)$, we find
$$
[(f^m)']^2(\xi)-[(f^m)']^2(\xi_0)\leq\frac{Km}{m+1}(f^{m+1}(\xi)-f^{m+1}(\xi_0))<0, \qquad \xi>\xi_0>\xi(K).
$$
By changing signs in the previous estimate, we get
$$
0<\frac{Km}{m+1}(f^{m+1}(\xi_0)-f^{m+1}(\xi))\leq[(f^m)']^2(\xi_0)-[(f^m)']^2(\xi)<[(f^m)']^2(\xi_0),
$$
and by letting $\xi\to\infty$ and recalling that $f(\xi)\to0$ we find, after taking square roots,
$$
\sqrt{\frac{Km}{m+1}}f(\xi_0)^{(m+1)/2}\leq|(f^m)'(\xi_0)|=-(f^m)'(\xi_0), \qquad \xi_0>\xi(K).
$$
Relabeling $\xi_0$ by $\xi$, we further deduce that
$$
\sqrt{\frac{K}{m(m+1)}}\leq-f^{(m-3)/2}(\xi)f'(\xi)=\frac{2}{m-1}\left|(f^{(m-1)/2})'(\xi)\right|,
$$
for any $\xi>\xi(K)$. Since $K>0$ has been chosen arbitrarily in the previous estimates and $m>1$, we get that
$$
\lim\limits_{\xi\to\infty}(f^{(m-1)/2})'(\xi)=-\infty,
$$
which is a contradiction with the fact that $f^{(m-1)/2}$ is a positive function decreasing to zero as $\xi\to\infty$. This contradiction gives that there is no such trajectory as assumed at the beginning, completing the proof.
\end{proof}
We are now ready to proceed with the global analysis of the system, leading to the proof of the main theorems.

\section{Some preparatory results of global analysis}\label{sec.prep}

We gather in this section some important preparatory results concerning the global analysis of the trajectories of the system \eqref{PSsyst2}, needed in the proofs of the main theorems. The first one establishes a positively invariant region which will play a very significant role in the forthcoming analysis.
\begin{lemma}\label{lem.invar}
The region
$$
\mathcal{R}:=\{(x,y,z)\in\real^3: y>0, z>x\}
$$
is positively invariant for the system \eqref{PSsyst2}: that is, if for a trajectory, there is $\eta_{1,*}\in\real$ such that $(x,y,z)(\eta_{1,*})\in\mathcal{R}$, then $(x,y,z)(\eta_1)\in\mathcal{R}$ for any $\eta_1>\eta_{1,*}$.
\end{lemma}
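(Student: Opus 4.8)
The plan is to use the standard subtangent-field criterion for positive invariance of a region cut out by strict inequalities: since $\mathcal R=\{y>0,\ z-x>0\}$, I would check that along each of the two boundary pieces $\{y=0,\ z\ge x\}$ and $\{z=x,\ y\ge0\}$ the flow of \eqref{PSsyst2} does not point strictly outward. Two short computations do the job. On $\{y=0\}$, the second equation of \eqref{PSsyst2} collapses to $\dot y=z-x$, which is $\ge0$ there and $>0$ whenever $z>x$. On $\{z=x\}$, subtracting the first equation of \eqref{PSsyst2} from the third gives
\[
\dot z-\dot x=x\big(\sigma+2+(p-m)y\big)-x\big(2-(m-1)y\big)=x\big(\sigma+(p-1)y\big),
\]
which — recalling that $x\ge0$ in the phase space of interest (Section \ref{sec.transf}) and that $p>1$, $\sigma>0$ — is $\ge0$ for $y\ge0$ and \emph{strictly} positive as soon as $x>0$, the term $\sigma x$ alone being enough. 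The positivity of $\sigma$ is precisely what makes this boundary strictly inflowing.

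Then I would run the usual first-exit-time argument: if some trajectory started in $\mathcal R$ at $\eta_{1,*}$ and left it, there would be a first $\tilde\eta_1>\eta_{1,*}$ with $(x,y,z)(\tilde\eta_1)\in\partial\mathcal R$, so that $y>0$ and $z-x>0$ on $[\eta_{1,*},\tilde\eta_1)$ while at least one of $y,\ z-x$ vanishes at $\tilde\eta_1$. If $z(\tilde\eta_1)=x(\tilde\eta_1)$, the function $h:=z-x$ would be positive on $[\eta_{1,*},\tilde\eta_1)$ and zero at $\tilde\eta_1$, forcing $\dot h(\tilde\eta_1)\le0$, contradicting $\dot h(\tilde\eta_1)=x(\tilde\eta_1)\big(\sigma+(p-1)y(\tilde\eta_1)\big)>0$ provided $x(\tilde\eta_1)>0$. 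If instead only $y(\tilde\eta_1)=0$, then $\dot y(\tilde\eta_1)=z-x>0$ contradicts $\dot y(\tilde\eta_1)\le0$; and the corner case $y(\tilde\eta_1)=0=z(\tilde\eta_1)-x(\tilde\eta_1)$ is subsumed in the $h$-argument, where now $\dot h(\tilde\eta_1)=\sigma\,x(\tilde\eta_1)>0$ if $x(\tilde\eta_1)>0$.

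The one genuinely delicate point — and the step I expect to require the most care — is the degenerate face $\{x=0\}$, where all the above derivative signs become $0$. I would dispose of it separately using the invariance of $\{x=0\}$ noted at the end of Section \ref{sec.transf}: if $x(\tilde\eta_1)=0$ then $x\equiv0$ along the whole orbit, so the trajectory lies in $\{x=0\}$ from the start, where \eqref{PSsyst2} reduces to $\dot y=-(N-2)y+z-my^2$, $\dot z=z(\sigma+2+(p-m)y)$; since $\{z=0\}$ is invariant and initially $z>x=0$, one keeps $z>0=x$, while $\dot y=z>0$ on $\{y=0\}$ keeps $y>0$, so there is no exit on this face either. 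Assembling these cases yields the contradiction and hence the positive invariance of $\mathcal R$; apart from the two one-line computations, essentially all the work is in this bookkeeping of the corner and the $x=0$ locus.
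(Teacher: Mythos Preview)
Your argument is correct and follows essentially the same approach as the paper: both proofs compute the normal component of the vector field on the two boundary pieces $\{y=0\}$ and $\{z=x\}$, obtaining $\dot y=z-x>0$ and $\dot z-\dot x=x(\sigma+(p-1)y)>0$ (the paper writes this last quantity as $z(\sigma+(p-1)y)$, which coincides on $\{z=x\}$). Your write-up is in fact more thorough than the paper's, which simply states the two sign computations and concludes; you supply the first-exit-time bookkeeping, treat the corner $\{y=0,\,z=x\}$ explicitly, and handle the degenerate face $\{x=0\}$ via its invariance --- refinements the paper omits.
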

\begin{proof}
The flow of the system \eqref{PSsyst2} across the plane $\{y=0\}$ (with normal vector $(0,1,0)$) has the direction given by the sign of the expression $z-x>0$ in $\mathcal{R}$. The flow of the system \eqref{PSsyst2} across the plane $z=x$ (with normal vector $(-1,0,1)$) has the direction given by the sign of the expression
$$
f(y,z)=z(\sigma+2+(p-m)y)-z(2-(m-1)y)=z(\sigma+(p-1)y)>0,
$$
in $\mathcal{R}$. Thus, a trajectory passing through a point in $\mathcal{R}$ cannot leave this region through none of its two ``walls" (the planes $\{y=0\}$ and $\{z=x\}$) and will remain there forever, as claimed.
\end{proof}
An immediate consequence of this lemma is the behavior of the trajectory $l_{\infty}$, that is, the unique trajectory on the unstable manifold of the critical point $Q_1$ contained in the plane $\{x=0\}$.
\begin{lemma}\label{lem.x0}
The trajectory $l_{\infty}$ enters and remains in the region $\mathcal{R}$.
\end{lemma}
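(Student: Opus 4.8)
The plan is to show that $l_{\infty}$ already lies in $\mathcal{R}$ in a neighborhood of the critical point $Q_1$ from which it emanates, and then to conclude by the positive invariance established in Lemma~\ref{lem.invar}. Since $l_{\infty}$ is, by definition, the trajectory on the unstable manifold of $Q_1$ contained in the invariant plane $\{x=0\}$, along it we have $x\equiv0$; consequently membership in $\mathcal{R}=\{y>0,\ z>x\}$ is equivalent to the two conditions $y>0$ and $z>0$.

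First I would pin down the direction in which $l_{\infty}$ leaves $Q_1$. Recall from the proof of Lemma~\ref{lem.Q1} that the two-dimensional unstable manifold of $Q_1$ has the linear approximation \eqref{intermXX}, that is, $y(\eta_1)=-x(\eta_1)/N+z(\eta_1)/(N+\sigma)+o(|(x(\eta_1),z(\eta_1))|)$ as $\eta_1\to-\infty$. Restricting to $\{x=0\}$, on $l_{\infty}$ this reduces to $y(\eta_1)=z(\eta_1)/(N+\sigma)+o(|z(\eta_1)|)$; equivalently, $l_{\infty}$ leaves $Q_1$ tangent to the eigenvector $e_3=(0,1,N+\sigma)$ associated with the positive eigenvalue $\lambda_3=\sigma+2$. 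Choosing the branch of this one-dimensional unstable manifold on which $z>0$ (the one relevant for non-negative profiles, consistent with the description of $l_{\infty}$ as the $C=\infty$ limit in \eqref{lC}), we obtain that $y(\eta_1)>0$ and $z(\eta_1)>0$ for $\eta_1$ sufficiently close to $-\infty$.

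Hence there exists $\eta_{1,*}\in\real$ such that $(x,y,z)(\eta_{1,*})=(0,y(\eta_{1,*}),z(\eta_{1,*}))$ satisfies $y(\eta_{1,*})>0$ and $z(\eta_{1,*})>0=x(\eta_{1,*})$, i.e. $(x,y,z)(\eta_{1,*})\in\mathcal{R}$. By Lemma~\ref{lem.invar} the region $\mathcal{R}$ is positively invariant for \eqref{PSsyst2}, so $(x,y,z)(\eta_1)\in\mathcal{R}$ for all $\eta_1>\eta_{1,*}$. Since $l_{\infty}$ is in $\mathcal{R}$ already for all sufficiently negative $\eta_1$, this shows that $l_{\infty}$ enters and remains in $\mathcal{R}$ (in fact $l_{\infty}\subset\mathcal{R}$), as claimed.

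The only step requiring genuine care is the determination of the sign pattern of $(y,z)$ along $l_{\infty}$ near $Q_1$; this is, however, immediate from the eigenvector computation in Lemma~\ref{lem.Q1}, after which the statement is a one-line application of Lemma~\ref{lem.invar}. The argument is unchanged in dimensions $N=1$ and $N=2$: as noted after Lemma~\ref{lem.N1}, the eigenvector $e_3$ spanning this branch of the unstable manifold of $Q_1$ is the same, so $l_{\infty}$ still leaves $Q_1$ into the region $\{y>0,\ z>0\}$.
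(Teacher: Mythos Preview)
Your proof is correct and follows essentially the same approach as the paper: both identify that $l_{\infty}$ leaves $Q_1$ tangent to the eigenvector $e_3=(0,1,N+\sigma)$ into the region $\{y>0,\ z>0\}\subset\mathcal{R}$, and then invoke the positive invariance of $\mathcal{R}$ from Lemma~\ref{lem.invar}. The paper additionally records the flow of the reduced system \eqref{PSsyst2x0} across $\{y=0\}$, but this is redundant once Lemma~\ref{lem.invar} is applied, so the arguments coincide.
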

\begin{proof}
It is obvious that $z>x=0$ along this trajectory. Moreover, the system \eqref{PSsyst2} reduces in the invariant plane $\{x=0\}$ to
\begin{equation}\label{PSsyst2x0}
\left\{\begin{array}{ll}\dot{y}=-(N-2)y+z-my^2, \\ \dot{z}=z(\sigma+2+(p-m)y),\end{array}\right.
\end{equation}
and the flow of the system \eqref{PSsyst2x0} across the axis $\{y=0\}$ is given by the sign of $z$, which is always non-negative. Thus, the trajectory $l_{\infty}$ goes out tangent to the eigenvector $e_3=(0,1,N+\sigma)$ of the matrix $M(Q_1)$ given in Lemma \ref{lem.Q1}, entering the region $\mathcal{R}$ and thus remaining there afterwards, according to Lemma \ref{lem.invar}.
\end{proof}
The next lemma establishes the global behavior of the trajectory $l_0$, which is rather interesting and gives us a clear understanding of how the exponent $p_F(\sigma)$ comes into play with a decisive role in the classification.
\begin{lemma}\label{lem.z0}
The trajectory $l_0$, corresponding to taking $C=0$ in \eqref{lC} and contained in the plane $\{z=0\}$, has the following properties:

(a) If $m<p<p_F(\sigma)$, it connects to the critical point $Q_5$.

(b) If $p=p_F(\sigma)$, it is explicit and connects to the critical point $P_2$.

(c) If $p>p_F(\sigma)$, it connects to the critical point $P_1$.
\end{lemma}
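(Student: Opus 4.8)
The plan is to work entirely in the invariant plane $\{z=0\}$, where $l_0$ lives, and to study the planar subsystem of \eqref{PSsyst2},
\[
\dot x = x\big(2-(m-1)y\big),\qquad \dot y = -x-(N-2)y-my^2-\frac{p-m}{\sigma+2}\,xy ,
\]
with $x\ge 0$, the dots being derivatives with respect to $\eta_1$. By \eqref{lC} with $C=0$, $l_0$ is the trajectory leaving $Q_1=(0,0)$ tangent to the direction $(N,-1)$, hence entering the quadrant $\mathcal D:=\{x>0,\ y<0\}$. First I would show $l_0$ never leaves $\mathcal D$: the plane $\{x=0\}$ is invariant, on $\{y=0\}$ one has $\dot y=-x<0$, and $\dot x=x\big(2-(m-1)y\big)>2x$ on $\mathcal D$, so $x$ is strictly increasing along $l_0$. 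There are no equilibria in the open quadrant (hence no periodic orbits there), and the only finite critical points in $\overline{\mathcal D}$ are $Q_1$ (whose stable manifold is the $y$-axis, avoided by $l_0$) and $Q_2$ (an unstable node for $N\ge3$; for $N\le2$ absent from $\overline{\mathcal D}$ or equal to $Q_1$), none of which can be $\omega(l_0)$. Hence $l_0$ escapes to the Poincar\'e equator, the only critical points on $\partial\overline{\mathcal D}$ being $P_1$ (direction $y/x\to0$), $P_2$ (direction $y/x\to-(p-m)/(\sigma+2)$) and $Q_5$ (direction $y/x\to-\infty$).

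The decisive tool is the ratio $v:=-y/x>0$ along $l_0$, which by a direct computation satisfies the Riccati-type equation
\[
\dot v = 1-Nv+vx\left(v-c\right),\qquad c:=\frac{p-m}{\sigma+2},
\]
and one observes that $c\lessgtr\frac1N$ exactly according to $p\lessgtr p_F(\sigma)$, with $p_F(\sigma)$ as in \eqref{pFs}. Along $l_0$ one has $v\to\frac1N$, $x\to0^+$ as $\eta_1\to-\infty$, and, recalling $y/x=Y$ under \eqref{PSchange.inf}, the three candidates become $v\to\infty$ (for $Q_5$), $v\to c$ with $x\to\infty$ (for $P_2$), and $v\to0$ with $x\to\infty$ (for $P_1$). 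Since $x$ is increasing, either $x\to\infty$, or $x$ has a finite limit and then ($l_0$ escaping) $y\to-\infty$, so $v\to\infty$ and $l_0$ connects to $Q_5$; thus for $p\ge p_F(\sigma)$ necessarily $x\to\infty$, and for $p<p_F(\sigma)$ it suffices to treat the sub-case $x\to\infty$. If $p=p_F(\sigma)$, i.e.\ $c=\frac1N$, the right-hand side above vanishes identically on $\{v=\frac1N\}$, so this line is invariant; hence $l_0$ is the \emph{explicit} half-line $\{z=0,\ y=-x/N,\ x>0\}$, on which $\dot x=x\big(2+\tfrac{m-1}{N}x\big)>0$ forces $x\to\infty$, whence $l_0$ connects to $P_2$ — this proves (b).

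For (a), $c<\frac1N$: on $\{v=\frac1N\}$ one has $\dot v=\frac xN\big(\frac1N-c\big)>0$, so $\{v\ge\frac1N\}$ is positively invariant and $v>\frac1N>c$ immediately after $Q_1$. If $v$ stayed bounded by some $V$, then $\dot v\ge(1-NV)+\frac xN\big(\frac1N-c\big)\to+\infty$ (as $x\to\infty$), impossible; so $\limsup v=\infty$. To upgrade this to $v\to\infty$ I would pass to $u:=1/v\in(0,N)$, which satisfies $\dot u=u\big(N-u-s(1-uc)\big)$ with $s:=-y=vx\ge x/N\to\infty$ and $1-uc\ge1-Nc>0$; hence $\dot u<0$ once $s>N$, so $u$ is eventually strictly decreasing, while $\liminf u=0$ (otherwise $\dot u\to-\infty$, contradicting $u>0$), so $u\to0$, i.e.\ $v\to\infty$. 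Thus $l_0$ connects to $Q_5$. For (c), $c>\frac1N$: now $\dot v<0$ on $\{v=\frac1N\}$ while $\dot v=1>0$ on $\{v=0\}$, so $\{0<v<\frac1N\}$ is positively invariant, $l_0$ enters it, and $x\to\infty$; if $\liminf v=\ell>0$, then for $\eta_1$ large $vx(c-v)\ge\frac\ell2\big(c-\frac1N\big)x$ dominates $1-Nv$ and $\dot v<0$ whenever $v\ge\ell/2$, pushing $v$ below every $\varepsilon>0$; so $v\to0$ with $x\to\infty$, and $l_0$ connects to $P_1$ — this proves (c).

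The main difficulty is exactly this last step, upgrading the $\limsup/\liminf$ information on $v$ to genuine convergence — i.e.\ excluding oscillation of $l_0$ between two directions at infinity — which is why the auxiliary variable $u=1/v$ (case (a)) and the eventual monotonicity of $v$ (case (c)) are needed, both resting on $x\to\infty$. Routine extra care is also needed for $N=1,2$ (where $Q_1$ is a node, resp.\ $Q_1=Q_2$ a saddle-node, so one checks $l_0$ is still the trajectory tangent to $(N,-1)$), and in the bookkeeping when the maximal $\eta_1$-interval of $l_0$ is finite.
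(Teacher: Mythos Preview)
Your approach is correct and takes a genuinely different route from the paper's. The paper uses the barrier line $\{y=-\tfrac{p-m}{\sigma+2}x\}$ (that is, $v=c$ in your notation) rather than $\{v=\tfrac1N\}$, then passes to the $(X,Y,Z)$ variables via \eqref{PSchange.inf} and applies Poincar\'e--Bendixson in the strip $\{-\beta/\alpha<Y<0\}$ (respectively the half-plane $\{Y<-\beta/\alpha\}$), deciding between $P_1$ and $P_2$ by inspecting the eigenvector $e_1=(1,(N\beta-\alpha)/(m\beta),0)$ tangent to the stable manifold of $P_2$ and checking from which side of $\{Y=-\beta/\alpha\}$ it approaches. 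Your argument via the scalar Riccati-type equation $\dot v=1-Nv+vx(v-c)$ is more self-contained and elementary: it bypasses the stable-manifold analysis of $P_2$ entirely, at the price of having to establish convergence of $v$ by hand. Two places deserve a sentence more: in (a), the claim ``$v>\tfrac1N$ immediately after $Q_1$'' is not literally immediate---one should note that $v>c$ is preserved (since $\dot v=1-Nc>0$ on $\{v=c\}$) and that $\dot v>0$ throughout $(c,\tfrac1N]$, so $v$ eventually exceeds $\tfrac1N$ and is then trapped there; in (c), after obtaining $\liminf v=0$ you still need $\limsup v=0$, which follows because for every $\varepsilon\in(0,\tfrac1N)$ one has $\dot v<0$ on $\{v\ge\varepsilon\}$ once $x$ is large enough, so $v$ cannot climb back above $\varepsilon$. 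Both are routine completions of the strategy you outlined.
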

\begin{proof}
The system \eqref{PSsyst2} reduces in the invariant plane $\{z=0\}$ to
\begin{equation}\label{PSsyst2z0}
\left\{\begin{array}{ll}\dot{x}=x(2-(m-1)y), \\ \dot{y}=-x-(N-2)y-my^2-\frac{p-m}{\sigma+2}xy.\end{array}\right.
\end{equation}
The orbit $l_0$ goes out of $Q_1$ tangent to the eigenvector $e_1=(N,-1,0)$, as established in Lemma \ref{lem.Q1}, and thus enters the half-plane $\{y<0\}$ and remains there forever, since the flow of the system \eqref{PSsyst2z0} across the axis $\{y=0\}$ points into the negative direction (as the sign of $-x$). We thus infer from the first equation in \eqref{PSsyst2z0} that $\eta_1\mapsto x(\eta_1)$ is an increasing function along $l_0$, hence we can invert this mapping and thus express the trajectory $l_0$ as a graph $y=y(x)$, such that, by the inverse function theorem,
\begin{equation}\label{interm4}
\frac{dy}{dx}=\frac{-x-(N-2)y(x)-my(x)^2-[(p-m)/(\sigma+2)]xy(x)}{x(2-(m-1)y(x))}.
\end{equation}
A direct and easy calculation shows that $y(x)=-x/N$ satisfies \eqref{interm4} exactly when $p=p_F(\sigma)$, that is, the trajectory $l_0$ is the line $y=-x/N$. We deduce from \eqref{PSchange.inf} that, in $(X,Y,Z)$ variables, the line $y=-x/N$ is seen as
$$
Y=\frac{y}{x}=-\frac{1}{N}=-\frac{p-m}{\sigma+2}=-\frac{\beta}{\alpha},
$$
so that it ends at the point $P_2$, completing the proof of part (b).

\medskip

Assume now that $p>p_F(\sigma)$, which is equivalent to $\beta/\alpha>1/N$. The flow of the system \eqref{PSsyst2z0} across the line
$$
(r_0): \qquad \left\{y=-\frac{(p-m)x}{\sigma+2}\right\}, \qquad {\rm with \ normal} \qquad \overline{n}=\left(\frac{p-m}{\sigma+2},1\right),
$$
is given by the sign of the expression
\begin{equation}\label{interm5}
F(x)=\frac{Nx(p-p_F(\sigma))}{\sigma+2}>0.
\end{equation}
Since on the trajectory $l_0$ we have
$$
y(\eta_1)\sim-\frac{x(\eta_1)}{N}>-\frac{(p-m)x(\eta_1)}{\sigma+2}, \qquad {\rm as} \ \eta_1\to-\infty,
$$
we infer that $l_0$ goes out from $Q_1$ into the region limited by the line $r_0$ and the $x$ axis, and it will stay forever in this region, according to \eqref{interm5}. Passing to the $(X,Y,Z)$ variables by undoing \eqref{PSchange.inf}, this region is seen as the strip
$$
\mathcal{S}=\left\{(X,Y)\in\real^2: -\frac{\beta}{\alpha}<Y<0\right\}
$$
of the invariant plane $\{Z=0\}$. The first equation of the system \eqref{PSsyst2z0} establishes that the coordinate $x$ is increasing on the trajectory $l_0$, or equivalently, $X=1/x$ decreases, hence, there is $X_{\infty}=\lim\limits_{\eta\to\infty}X(\eta)\geq0$. Since $Y(\eta)$ is bounded in the strip $\mathcal{S}$, we readily infer from the Poincar\'e-Bendixon theory \cite[Section 3.7]{Pe} that the trajectory should end at a critical point, as there are obviously no periodic orbits with $\eta\mapsto X(\eta)$ monotone. Thus, it either connects to $P_1$ or to $P_2$. But the unique trajectory entering $P_2$ on the stable manifold of it, inside the plane $\{Z=0\}$, arrives tangent to the eigenvector
$$
e_1=\left(1,\frac{N\beta-\alpha}{m\beta}\right),
$$
according to Lemma \ref{lem.P2}. Since $N\beta-\alpha>0$, it follows that this trajectory enters $P_2$ from the region $\{Y<-\beta/\alpha\}$, that is, outside the strip $\mathcal{S}$, hence $l_0$ cannot reach $P_2$ and consequently, it will arrive to the asymptotically stable point $P_1$, proving part (c).

\medskip

Let now $m<p<p_F(\sigma)$, that is, $\beta/\alpha<1/N$. Noticing that now $F(x)<0$ in \eqref{interm5}, similar arguments as in the previous proof establish that, in this case, the trajectory $l_0$, seen in $(X,Y,Z)$ variables, will enter and stay in the half-space $\{Y<-\beta/\alpha\}$, while now $N\beta-\alpha<0$, which means that the unique trajectory entering $P_2$ comes from the interior of the strip $\mathcal{S}$. We conclude that, once more, $l_0$ does not arrive to $P_2$. Moreover, the first equation in \eqref{PSsyst1} implies that $\eta\mapsto X(\eta)$ is decreasing on $l_0$, so that $X(\eta)\to X_{\infty}\in[0,\infty)$ as $\eta\to\infty$. The Poincar\'e-Bendixon theory then readily implies that $Y(\eta)\to-\infty$ as $\eta\to\infty$, since there is no finite critical point in the half-plane $\{Y\leq-\beta/\alpha\}$ (except $P_2$, that was discarded above). Hence, on $l_0$ we have $Y(\eta)\to-\infty$, $Y(\eta)/X(\eta)\to-\infty$, as $\eta\to\infty$, which shows that the limit is the stable node $Q_5$, proving part (a).
\end{proof}
This lemma is very important in the forthcoming proofs, since it shows how the behavior of one of the limits of the unstable manifold of $Q_1$ changes when $p=p_F(\sigma)$, which is the reason for which there is a strong difference between the outcome of Theorem \ref{th.small} with respect to Theorem \ref{th.large}. We conclude this section with one more technical result needed in the proofs of the theorems.
\begin{lemma}\label{lem.region}
Let $(x,y,z)(\eta_1)$ be a trajectory of the system \eqref{PSsyst2} such that there is $\eta_{1,*}\in\real$ with the property that $x(\eta_{1,*})>0$, $z(\eta_{1,*})>0$ and
\begin{equation}\label{cond.region}
-\frac{\sigma+2}{p-m}<y(\eta_1)<0, \qquad {\rm for \ any} \ \eta_1>\eta_{1,*}.
\end{equation}
Then, this trajectory ends by connecting to one of the critical points $P_{\gamma_0}$ or $P_1$.
\end{lemma}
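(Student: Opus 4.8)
The plan is to analyze the orbit directly in the system \eqref{PSsyst2} on the interval $(\eta_{1,*},\infty)$: first establish that $x\to\infty$ and $y/x\to0$, then prove that the ratio $z/x$ converges to a finite limit, and finally transfer the conclusion to the variables $(X,Y,Z)$ of \eqref{PSsyst1} through \eqref{PSchange.inf}, where the limiting critical point can be identified.

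The easy monotonicities come first. Since $\{x=0\}$ and $\{z=0\}$ are invariant and $x(\eta_{1,*}),z(\eta_{1,*})>0$, we have $x(\eta_1),z(\eta_1)>0$ for all $\eta_1>\eta_{1,*}$; the first equation of \eqref{PSsyst2} and $y<0$ give $\dot x\ge2x$, hence $x(\eta_1)\to\infty$, while the third equation and $y>-(\sigma+2)/(p-m)$ give $\dot z>0$. In the variables \eqref{PSchange.inf} this forces $X=1/x\to0$ and $Y=y/x\to0$ (as $y$ stays bounded). Hence, as soon as $Z:=z/x$ is shown to converge to some $Z_\infty\in[0,\infty)$, the orbit tends as $\eta_1\to\infty$ to the point $(0,0,Z_\infty)$ of \eqref{PSsyst1}: this is $P_1$ if $Z_\infty=0$, and $P_{Z_\infty}$ if $Z_\infty>0$; in the latter case the orbit lies in $\{X>0\}$, so Lemma~\ref{lem.Pg} forces $Z_\infty=\gamma_0$, which is exactly the assertion. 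Everything therefore reduces to showing that $z/x$ has a limit.

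For this I would use second-order information on $\ln Z$. Subtracting the logarithmic derivatives of $z$ and $x$ in \eqref{PSsyst2} gives $\frac{d}{d\eta_1}\ln Z=\sigma+(p-1)y$, hence $\frac{d^2}{d\eta_1^2}\ln Z=(p-1)\dot y$. Using $\alpha=(\sigma+2)/L$ and the elementary identity $(\sigma+2)(p-1)-\sigma(p-m)=L$ (which also gives $\gamma_0=\tfrac{L}{(\sigma+2)(p-1)}$), one rewrites the $\dot y$-equation of \eqref{PSsyst2} as
$$
\dot y=z-\gamma_0 x-\frac{p-m}{(\sigma+2)(p-1)}\,x\,\frac{d}{d\eta_1}\ln Z-(N-2)y-my^2 .
$$
In particular, at a critical point of $\ln Z$, where $y=-\sigma/(p-1)$ and the $x$-term disappears, $\dot y=z-\gamma_0 x+c_0$ with $c_0$ a fixed constant depending only on $N,m,p,\sigma$. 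Thus at any local maximum $s$ of $Z$ we have $\dot y(s)\le0$, i.e. $Z(s)\le\gamma_0-c_0/x(s)$, and at any local minimum $Z(s)\ge\gamma_0-c_0/x(s)$. If $Z$ is not eventually monotone it has infinitely many local extrema, occurring at times where $x\to\infty$, and these two one-sided estimates force $\limsup_{\eta_1\to\infty}Z\le\gamma_0\le\liminf_{\eta_1\to\infty}Z$, i.e. $Z\to\gamma_0$. If $Z$ is eventually monotone it has a limit provided it is bounded, and it cannot blow up: from $\dot y=z-x\bigl(1-\tfrac{p-m}{\sigma+2}|y|\bigr)-(N-2)y-my^2$, the bound $\tfrac{p-m}{\sigma+2}|y|<1$ and the boundedness of $y$ give $\dot y>z-x-C$, so $z/x\to\infty$ would imply $\dot y\to\infty$ and then $y\to\infty$, contradicting $-(\sigma+2)/(p-m)<y<0$. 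In all cases $Z$ converges to some $Z_\infty\in[0,\infty)$, and the reduction explained in the second paragraph then yields the conclusion.

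The step I expect to be the main obstacle is precisely the convergence of $z/x$, that is, excluding sustained oscillations or escape to infinity of this ratio. The device that makes it work is the second derivative of $\ln Z$, which pins $y$ to $-\sigma/(p-1)$ at every extremum of $Z$ and, through the $\dot y$-equation, turns ``$Z$ is extremal'' into a two-sided comparison of $z$ with $\gamma_0 x$ up to an error $O(1)$ that becomes negligible because $x\to\infty$. The remaining ingredients---the monotonicity of $x$ and $z$, the vanishing of $X$ and $Y$, and the identification of the limit point via Lemma~\ref{lem.Pg}---are routine.
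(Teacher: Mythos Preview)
Your argument is correct and, in spirit, parallel to the paper's, but it is organized more efficiently and stays entirely in the phase-space variables.

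Two points of comparison are worth recording. First, the paper obtains $x\to\infty$ by contradiction: assuming $x_\infty<\infty$ it shows $z_\infty=\infty$ and invokes Lemma~\ref{lem.Q4} (no orbit can reach $Q_4$ from $\{y<0\}$). Your observation that \eqref{cond.region} gives $\dot x\ge 2x$ yields $x\to\infty$ in one line and makes Lemma~\ref{lem.Q4} unnecessary here. Second, to exclude sustained oscillations of the limit along the $Z$-axis, the paper passes back to profiles, sets $g(\xi)=\xi^{\sigma/(p-1)}f(\xi)$, derives the auxiliary ODE \eqref{ode.g}, and evaluates it along sequences of local maxima and minima of $g$. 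Your approach does the same thing without leaving the dynamical system: since $Z=z/x=\alpha^{-1}\xi^{\sigma}f^{p-1}=\alpha^{-1}g^{p-1}$, the extrema of $Z$ are exactly the extrema of $g$, and your identity $\frac{d}{d\eta_1}\ln Z=\sigma+(p-1)y$ together with the rewriting of $\dot y$ replaces the computation of \eqref{ode.g}. The two arguments are therefore equivalent, but yours is more self-contained and avoids the detour through \eqref{ode.g}. Both conclude via Lemma~\ref{lem.Pg} to pin down $Z_\infty\in\{0,\gamma_0\}$.

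One small technical point you should make explicit: when you split into ``$Z$ eventually monotone'' versus ``$Z$ has infinitely many local extrema with times tending to $+\infty$'', you are implicitly using that the zeros of $\eta_1\mapsto y(\eta_1)+\sigma/(p-1)$ cannot accumulate at a finite time; this follows from analyticity of the flow (otherwise $y\equiv-\sigma/(p-1)$, which is readily excluded). The paper's proof carries the same tacit assumption.
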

\begin{proof}
The condition \eqref{cond.region} together with the first and third equation of \eqref{PSsyst2} show that $\eta_1\mapsto x(\eta_1)$, $\eta_1\mapsto z(\eta_1)$ are increasing functions on $(\eta_{1,*},\infty)$ on the trajectory under consideration. Thus, there exist
$$
x_{\infty}:=\lim\limits_{\eta_1\to\infty}x(\eta_1)>0, \qquad z_{\infty}:=\lim\limits_{\eta_1\to\infty}z(\eta_1)>0.
$$
Assume for contradiction that $x_{\infty}<\infty$. If also $z_{\infty}<\infty$, similar arguments as in the proof of \cite[Proposition 4.10]{ILS24} entail that the $\omega$-limit of the trajectory has to be a (finite) critical point, and there is no such point satisfying \eqref{cond.region} with $x>0$, $z>0$. Thus, $z_{\infty}=\infty$, and since $x_{\infty}<\infty$ and $y(\eta_1)$ is bounded by \eqref{cond.region}, we conclude that the trajectory ends at the critical point $Q_4$, which contradicts Lemma \ref{lem.Q4}, since the trajectory would reach $Q_4$ coming from the half-space $\{y<0\}$. We thus deduce that $x_{\infty}=\infty$ and in particular $(y/x)(\eta_1)\to0$ as $\eta_1\to\infty$ by \eqref{cond.region}. Passing to $(X,Y,Z)$ variables by undoing \eqref{PSchange.inf}, we find that $X(\eta)\to0$ and $Y(\eta)\to0$ along this trajectory, as $\eta\to\infty$. We thus obtain a trajectory of the system \eqref{PSsyst1} having an $\omega$-limit set included in the $Z$ axis. Lemmas \ref{lem.P1} and \ref{lem.Pg} show that this limit is either one of the critical points $P_1$ or $P_{\gamma_0}$, or a segment of the critical line $\{X=0, Y=0\}$ of the system \eqref{PSsyst1} which cannot have an endpoint at zero due to the stability of $P_1$. The latter conclusion is equivalent to the corresponding profile oscillating between two hyperbolas
$$
A_1\xi^{-\sigma/(p-1)}\leq f(\xi)\leq A_2\xi^{-\sigma/(p-1)}, \qquad \xi\geq\xi_0>0,
$$
for some constants $0<A_1<A_2<\infty$ and some $\xi_0>0$ very large. Letting then $g(\xi):=\xi^{\sigma/(p-1)}f(\xi)$, we find by direct calculation that
\begin{equation}\label{ode.g}
\begin{split}
\xi^2(g^m)''(\xi)&-\left(\frac{2m\sigma}{p-1}-N+1\right)\xi(g^m)'(\xi)+\frac{m\sigma}{p-1}\left(\frac{m\sigma}{p-1}-N+2\right)g^m(\xi)\\
&+\xi^{L/(p-1)}\left[\frac{1}{p-1}g(\xi)+\beta\xi g'(\xi)-g^p(\xi)\right]=0.
\end{split}
\end{equation}
Let $(\xi^{m}_{k})_{k\geq1}$, respectively $(\xi^{M}_{k})_{k\geq1}$ be two sequences of local minima of $g$, respectively local maxima of $g$, such that $\xi^{m}_{k}\to\infty$, $\xi^{M}_{k}\to\infty$ as $k\to\infty$, and
$$
g(\xi^m_k)\to L_{{\rm \inf}}:=\liminf\limits_{\xi\to\infty}g(\xi)\in[A_1,A_2], \quad g(\xi^M_k)\to L_{{\rm sup}}:=\limsup\limits_{\xi\to\infty}g(\xi)\in[A_1,A_2].
$$
Evaluating \eqref{ode.g} at $\xi=\xi^m_k$, respectively at $\xi=\xi^M_{k}$, and taking into account that $g^m(\xi^m_{k})$, respectively $g^m(\xi^M_{k})$, are bounded, we readily deduce that the big term in brackets in \eqref{ode.g} has to compensate, for $k\geq1$ sufficiently large, the term $\xi^2(g^m)''(\xi)$ on these sequences. We thus have
$$
\lim\limits_{k\to\infty}\left[\frac{1}{p-1}g(\xi^m_{k})-g^p(\xi^m_{k})\right]\leq0, \quad \lim\limits_{k\to\infty}\left[\frac{1}{p-1}g(\xi^M_{k})-g^p(\xi^M_{k})\right]\geq0,
$$
whence
$$
L_{{\rm inf}}\geq\left(\frac{1}{p-1}\right)^{1/(p-1)}\geq L_{{\rm sup}}.
$$
This implies that both limits are equal to the constant $(1/(p-1))^{1/(p-1)}$ and the trajectory enters the critical point $P_{\gamma_0}$. This leads to a contradiction with the possibility of infinite, non-damped oscillations and, thus, to the conclusion that the trajectory ends at one of the critical points $P_1$ or $P_{\gamma}$, as stated.
\end{proof}

\section{Monotonicity}\label{sec.monot}

In this section, we prove that the profiles $f(\cdot;A)$ solutions to the Cauchy problem \eqref{SSODE}-\eqref{IC} are ordered while they are decreasing. This fact, which paves the way towards uniqueness of the compactly supported very singular solution, is proved by employing a \emph{sliding technique} which stems, up to our knowledge, from the classical paper \cite{FK80} (and in the form that follows, from \cite{YeYin}), but has been employed with success by one of the authors and his collaborators in recent works such as \cite{IMS23, ILS24}. In order to employ this method, we first need a local behavior near $\xi=0$ of the profiles $f(\cdot;A)$ more precise than the one given in Lemma \ref{lem.Q1}. We follow at this point the ideas in \cite[Section 4.1]{ILS24}. Let us consider thus the following Cauchy problem associated to the part of \eqref{SSODE} coming only from the porous medium equation:
\begin{equation}\label{PMEODE}
(\phi^m)''(\xi)+\frac{N-1}{\xi}(\phi^m)'(\xi)+\alpha\phi(\xi)+\beta\xi\phi'(\xi)=0,
\end{equation}
with initial conditions $\phi(0)=A$, $\phi'(0)=0$, and denote by $\phi(\cdot;A)$ its (unique) solution. The following result shows that a number of the first terms in the Taylor expansion of the profile $f(\cdot;A)$ are similar to the ones of the Taylor expansion of the profile $\phi(\cdot;A)$. The following basic property (but whose proof is rather technical) has been published as \cite[Lemma 4.2]{ILS24}.
\begin{lemma}\label{lem.exp1}
For $A>0$ and for any integer $k$ such that $2\leq k<2+\sigma$, we have
$$
f(\xi;A)-\phi(\xi;A)=o(\xi^k), \qquad f^m(\xi;A)-\phi^m(\xi;A)=o(\xi^k), \qquad {\rm as} \ \xi\to0.
$$
\end{lemma}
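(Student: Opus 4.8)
The plan is to compare the two Cauchy problems \eqref{SSODE}-\eqref{IC} and \eqref{PMEODE} (with the same initial data $A$) by regarding the $\xi^\sigma f^p$ term as a small perturbation near the origin, and then bootstrap on the order of vanishing of the difference $w:=f-\phi$. First I would set $F=f^m$, $\Phi=\phi^m$, subtract the two equations, and write the resulting linear-in-$W:=F-\Phi$ ODE in the self-adjoint form obtained by multiplying through by $\xi^{N-1}e^{\text{(something)}\xi^2}$; the standard weight here is $\mu(\xi)=\xi^{N-1}\exp\!\big(\tfrac{\beta(m-1)}{2m}\,\xi^{2}\,\text{-type factor}\big)$ coming from the first-order term $\beta\xi\phi'$. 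Integrating twice from $0$ (using $W(0)=W'(0)=0$, which follows from the shared initial conditions and $C^2$-regularity of $F,\Phi$), one gets an integral representation of $W(\xi)$ in which the forcing term is essentially $\int_0^\xi\!\!\int_0^s t^{N-1}t^\sigma f(t)^p\,dt\,ds$ divided by the weights, plus a lower-order linear feedback term involving $W$ itself and the (bounded, smooth, nonvanishing near $0$) coefficients relating $f$–$\phi$ to $F$–$\Phi$.

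The key observation is that the forcing integrand carries the factor $t^\sigma$, so a naive bound gives $W(\xi)=O(\xi^{\sigma+2})$ immediately; the point of the lemma is the sharper conclusion $o(\xi^k)$ for every integer $k<2+\sigma$. I would obtain this by a finite induction on $k$: assuming $w(\xi)=o(\xi^{k-1})$ (the base case $k-1=1$ being $w(\xi)=o(\xi)$, i.e.\ $w'(0)=0$), feed this back into the integral representation. Because the forcing term already contributes $O(\xi^{\sigma+2})=o(\xi^k)$ whenever $k<\sigma+2$, and because the linear feedback term is, after the double integration against the weights, one order smoother than its integrand, the feedback of an $o(\xi^{k-1})$ error produces an $o(\xi^{k+1})$ contribution — hence certainly $o(\xi^k)$. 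This closes the induction up to the largest integer $k$ with $k<2+\sigma$. The statement for $F-\Phi=o(\xi^k)$ then follows from the one for $f-\phi$ via the mean value theorem applied to $t\mapsto t^m$, since $f,\phi$ are bounded away from $0$ near the origin (both tend to $A>0$), so $F-\Phi$ and $f-\phi$ vanish to exactly the same order.

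The main obstacle I expect is the bookkeeping of the linear feedback term: the subtracted equation for $W=F-\Phi$ is \emph{not} literally the PME operator applied to $W$, because the term $\alpha f+\beta\xi f'$ in \eqref{SSODE} is expressed through $f=F^{1/m}$, not through $F$, and likewise for $\phi$; so one must control differences like $F^{1/m}-\Phi^{1/m}$ and $(F^{1/m})'-(\Phi^{1/m})'$ in terms of $W$ and $W'$, using that $F,\Phi\in C^2$ with $F(0)=\Phi(0)=A^m>0$. This is where having to keep track of \emph{both} $w$ and $w'$ (equivalently $W$ and $W'$) simultaneously in the induction hypothesis becomes necessary, and where the claim that double integration against the Gaussian-type weight gains two derivatives' worth of smallness must be made quantitative near $\xi=0$, where the weight $\mu(\xi)=\xi^{N-1}(1+O(\xi^2))$ is only Hölder-regular in the worst case $N=1$. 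Since the full details are exactly those of \cite[Lemma 4.2]{ILS24}, I would carry out the weight computation and the induction step explicitly only to the extent of exhibiting the gain of order, and refer to that reference for the remaining routine estimates.
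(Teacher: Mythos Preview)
Your proposal is essentially correct and in the same spirit as the argument the paper invokes from \cite[Lemma~4.2]{ILS24}: subtract the two Cauchy problems, obtain an integral representation in which the absorption term contributes a forcing of order $\xi^{\sigma+2}$, and bootstrap the order of vanishing of the difference through a finite induction, using that $f,\phi\to A>0$ so that $f^m-\phi^m$ and $f-\phi$ are comparable near the origin.

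One point worth tightening: the exponential weight you propose, $\mu(\xi)=\xi^{N-1}\exp(\cdots\xi^2)$, does not actually put the subtracted equation in self-adjoint form, precisely because (as you note yourself in the ``main obstacle'' paragraph) the first-order term is $\beta\xi(F^{1/m})'$ and not $\beta\xi F'$. The device the paper uses---visible explicitly in the proof of the next lemma, Lemma~\ref{lem.exp2}---is cleaner: set
\[
H(\xi)=\xi^{N-1}(f^m)'(\xi)+\beta\xi^{N}f(\xi),
\]
so that the identity $H'(\xi)=(N\beta-\alpha)\xi^{N-1}f(\xi)+\xi^{N+\sigma-1}f^p(\xi)$ follows directly from \eqref{SSODE}, and likewise for $\phi$ (without the $\xi^{N+\sigma-1}$ term). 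Differencing these two first-order identities and integrating once replaces your double integration against an exponential weight by a single integration against the pure power $\xi^{N-1}$, and the feedback term is simply $(N\beta-\alpha)\xi^{N-1}(f-\phi)$, with no derivative of the difference appearing. This removes the need to track $w'$ in the induction hypothesis and sidesteps the regularity concern you raise for $N=1$. Either route closes, but the $H$-function formulation is the one underlying both Lemma~\ref{lem.exp1} and Lemma~\ref{lem.exp2} in the paper.
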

This result has been proved in \cite[Lemma 4.2]{ILS24} under the hypothesis $p\in(0,1)$. An inspection of the proof therein shows that the last term $\xi^{\sigma}f^p(\xi)$ has absolutely no influence in the calculations, thus the proof is completely identical and we omit it here. We next introduce the first order in the expansion as $\xi\to0$ which depends on $\sigma$, following \cite[Lemma 4.3]{ILS24}.
\begin{lemma}\label{lem.exp2}
Let $k_0$ be the largest integer strictly below $\sigma$ and let $A\in(0,\infty)$. Then, as $\xi\to0$,
\begin{equation}\label{exp.smallF1}
f^m(\xi;A)=\sum\limits_{j=0}^{k_0+2}B_j\xi^j+\frac{A^p}{(\sigma+2)(\sigma+N)}\xi^{\sigma+2}+o(\xi^{\sigma+2}),
\end{equation}
if $\sigma\not\in\mathbb{N}$ and $k_0$ is the integer part of $\sigma$, or
\begin{equation}\label{exp.smallF2}
f^m(\xi;A)=\sum\limits_{j=0}^{k_0+3}B_j\xi^j+\frac{A^p}{(\sigma+2)(\sigma+N)}\xi^{\sigma+2}+o(\xi^{\sigma+2}),
\end{equation}
if $\sigma\in\mathbb{N}$ and $k_0=\sigma-1$, where $B_j$ are the Taylor coefficients of the expansion of the function $\phi^m(\cdot;A)$, with $\phi(\cdot;A)$ solution to \eqref{PMEODE} with initial conditions $\phi(0)=A$, $\phi'(0)=0$.
\end{lemma}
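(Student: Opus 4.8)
The plan is to follow the approach of \cite[Lemma 4.3]{ILS24} and measure, through an integral representation, exactly how much the absorption term $\xi^{\sigma}f^{p}$ shifts the small-$\xi$ expansion of $f^{m}(\cdot;A)$ away from that of $\phi^{m}(\cdot;A)$. As a preliminary remark, $\phi(\cdot;A)$ solving \eqref{PMEODE} with the symmetric data $\phi(0)=A$, $\phi'(0)=0$, the uniqueness for \eqref{PMEODE} forces $\phi(\cdot;A)$ to be even, hence $\phi^{m}(\cdot;A)$ is smooth near $\xi=0$ with a Taylor expansion $\phi^{m}(\xi;A)=\sum_{j\ge0}B_{j}\xi^{j}$ in which $B_{j}=0$ for $j$ odd; this is the expansion truncated in \eqref{exp.smallF1}--\eqref{exp.smallF2}. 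Also, from \eqref{IC} and the $C^{2}$ regularity of $f^{m}$ and $\phi^{m}$ near the origin, $f(0;A)=\phi(0;A)=A$ and $(f^{m})'(0;A)=(\phi^{m})'(0;A)=0$.

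First I set $g:=f^{m}(\cdot;A)-\phi^{m}(\cdot;A)$, so $g(0)=g'(0)=0$, and subtract \eqref{PMEODE} from \eqref{SSODE} to get the linear equation
$$
g''(\xi)+\frac{N-1}{\xi}\,g'(\xi)+\alpha\bigl(f(\xi;A)-\phi(\xi;A)\bigr)+\beta\xi\bigl(f'(\xi;A)-\phi'(\xi;A)\bigr)=\xi^{\sigma}f(\xi;A)^{p}.
$$
Multiplying by $\xi^{N-1}$, rewriting the principal part as $(\xi^{N-1}g')'$ and integrating over $(0,\xi)$ (the contribution at $0$ vanishing because $g'(0)=0$), I obtain
$$
\xi^{N-1}g'(\xi)=\int_{0}^{\xi}s^{N-1+\sigma}f(s;A)^{p}\,ds-\alpha\int_{0}^{\xi}s^{N-1}\bigl(f(s;A)-\phi(s;A)\bigr)ds-\beta\int_{0}^{\xi}s^{N}\bigl(f'(s;A)-\phi'(s;A)\bigr)ds,
$$
and I treat the last integral by one integration by parts, so that only $f-\phi$ (and not $f'-\phi'$) needs to be estimated. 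By Lemma \ref{lem.exp1}, $f(\cdot;A)-\phi(\cdot;A)=o(\xi^{k})$ for the largest integer $k<\sigma+2$, and that integer is $\ge\sigma+1$; hence the two correction integrals are $o(\xi^{N+\sigma+1})$, while $f(s;A)^{p}=A^{p}+o(1)$ as $s\to0$ by continuity. Therefore $\xi^{N-1}g'(\xi)=\frac{A^{p}}{N+\sigma}\xi^{N+\sigma}+o(\xi^{N+\sigma})$, so $g'(\xi)=\frac{A^{p}}{N+\sigma}\xi^{\sigma+1}+o(\xi^{\sigma+1})$, and integrating once more from $0$,
$$
f^{m}(\xi;A)-\phi^{m}(\xi;A)=g(\xi)=\frac{A^{p}}{(\sigma+2)(\sigma+N)}\xi^{\sigma+2}+o(\xi^{\sigma+2}),\qquad \xi\to0.
$$

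It then remains to add the Taylor expansion of $\phi^{m}(\cdot;A)$: retaining the terms up to order $k_{0}+2$ if $\sigma\notin\mathbb{N}$, respectively up to order $k_{0}+3$ if $\sigma\in\mathbb{N}$, leaves a remainder $O(\xi^{k_{0}+3})$, respectively $O(\xi^{\sigma+3})$, which in each case is $o(\xi^{\sigma+2})$ by the choice of $k_{0}$; combining with the displayed expansion of $g$ gives \eqref{exp.smallF1}, respectively \eqref{exp.smallF2}. The main obstacle is the sharpness bookkeeping in the integral representation: one must check that the two correction integrals are genuinely of lower order than the leading term $\xi^{N+\sigma}$ — which is precisely what the range $k<\sigma+2$ in Lemma \ref{lem.exp1} provides — and that no logarithmic resonant term intrudes, which holds because the forcing of exponent $\sigma$ yields a particular solution of exponent $\sigma+2$, and $\sigma+2$ is never one of the indicial exponents $0$, $2-N$ of $g\mapsto g''+\frac{N-1}{\xi}g'$ when $\sigma>0$. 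As these estimates are routine and essentially identical to those in \cite[Lemma 4.3]{ILS24} — there the absorption term $\xi^{\sigma}f^{p}$ with $p\in(0,1)$ plays exactly the same role, since near the origin it behaves like $A^{p}\xi^{\sigma}$ regardless of the size of $p$ — I would refer the reader to that source for the remaining details.
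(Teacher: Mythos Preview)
Your proof is correct and follows essentially the same route as the paper's: both exploit the integral representation of the radial operator $\xi^{1-N}(\xi^{N-1}(\cdot)')'$ together with Lemma~\ref{lem.exp1} to isolate the first $\sigma$-dependent contribution. The only cosmetic difference is that the paper packages the $\beta\xi f'$ term into the auxiliary quantity $H(\xi;A)=\xi^{N-1}(f^m)'+\beta\xi^{N}f$ (so that $H'=(N\beta-\alpha)\xi^{N-1}f+\xi^{N+\sigma-1}f^{p}$ involves no derivative of $f$), which spares the integration by parts you perform on $\int_0^\xi s^{N}(f'-\phi')\,ds$, but the estimates and the conclusion are identical.
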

\begin{proof}
This is an immediate adaptation of the proof of \cite[Lemma 4.3]{ILS24}, but we give a sketch of it due to its importance for the forthcoming monotonicity result. Let us introduce the function
$$
H(\xi;A)=\xi^{N-1}(f^m)'(\xi;A)+\beta\xi^N f(\xi;A).
$$
We readily observe that
\begin{equation*}
\begin{split}
H'(\xi;A)&=\xi^{N-1}\left[(f^m)''(\xi;A)+\frac{N-1}{\xi}(f^m)'(\xi;A)+\beta\xi f'(\xi;A)\right]+N\beta\xi^{N-1}f(\xi;A)\\
&=(N\beta-\alpha)\xi^{N-1}f(\xi;A)+\xi^{N+\sigma-1}f^p(\xi;A).
\end{split}
\end{equation*}
Let us now restrict ourselves to the case $\sigma\not\in\mathbb{N}$ and let $k_0$ be the integer part of $\sigma$ (the other case being very similar). We infer from Lemma \ref{lem.exp1}, \eqref{beh.Q1} and the previous calculation that
\begin{equation*}
\begin{split}
H'(\xi;A)&=(N\beta-\alpha)\sum\limits_{j=0}^{k_0+2}b_j\xi^{N+j-1}+o(\xi^{N+k_0+1})\\&+\xi^{N+\sigma-1}\left[A^{m-1}-\frac{\alpha(m-1)}{2mN}\xi^2+o(\xi^2)\right]^{p/(m-1)}\\
&=(N\beta-\alpha)\sum\limits_{j=0}^{k_0+2}b_j\xi^{N+j-1}+o(\xi^{N+k_0+1})+A^p\xi^{N+\sigma-1}+o(\xi^{N+\sigma})\\
&=(N\beta-\alpha)\sum\limits_{j=0}^{k_0}b_j\xi^{N+j-1}+A^p\xi^{N+\sigma-1}+o(\xi^{N+\sigma-1}),
\end{split}
\end{equation*}
where $b_j$ are the coefficients of the expansion of the function $\phi(\cdot;A)$. We further get by integration that
$$
H(\xi;A)=(N\beta-\alpha)\sum\limits_{j=0}^{k_0}\frac{b_j}{N+j}\xi^{N+j}+\frac{A^p}{N+\sigma}\xi^{N+\sigma}+o(\xi^{N+\sigma}).
$$
Recalling the definition of $H(\xi;A)$, we obtain after easy manipulations the expansion of $(f^m)'(\xi;A)$ as follows
$$
(f^m)'(\xi;A)=\sum\limits_{j=0}^{k_0}\left(-\beta+\frac{N\beta-\alpha}{N+j}\right)b_j\xi^{j+1}+\frac{A^p}{N+\sigma}\xi^{\sigma+1}+o(\xi^{\sigma+1}),
$$
which leads to \eqref{exp.smallF1} by one more integration step. The calculation is completely analogous when $\sigma\in\mathbb{N}$, and we refer the reader to \cite[Lemma 4.3]{ILS24} for the details.
\end{proof}
We are now in a position to prove the monotonicity lemma for decreasing profiles. More precisely, we have
\begin{lemma}\label{lem.monot}
Let $0<A_1<A_2<\infty$, $f_1=f(\cdot;A_1)$, respectively $f_2=f(\cdot;A_2)$ and let $\Xi\in(0,\infty)$ such that $f_1(\xi)>0$, $f_1'(\xi)<0$, $f_2'(\xi)<0$ for any $\xi\in(0,\Xi)$. Then $f_1(\xi)<f_2(\xi)$ for any $\xi\in(0,\Xi)$.
\end{lemma}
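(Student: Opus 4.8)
\emph{Overall strategy.} I would run a sliding argument exploiting the near‑invariance of \eqref{SSODE} under the rescaling $f\mapsto\Lambda f(\Lambda^{-(m-1)/2}\,\cdot\,)$. Setting $g_\Lambda(\xi):=\Lambda f_1(\Lambda^{-(m-1)/2}\xi)$, a direct computation (the porous‑medium part of \eqref{SSODE} is exactly invariant, the weight term is not) gives $(g_\Lambda^m)''+\frac{N-1}{\xi}(g_\Lambda^m)'+\alpha g_\Lambda+\beta\xi g_\Lambda'-\xi^\sigma g_\Lambda^p=\Lambda\bigl(1-\Lambda^{\,p-1+\sigma(m-1)/2}\bigr)\zeta^\sigma f_1(\zeta)^p$ with $\zeta=\Lambda^{-(m-1)/2}\xi$; since $p-1+\sigma(m-1)/2>0$, this makes $g_\Lambda$ a strict \emph{supersolution} of \eqref{SSODE} for $\Lambda>1$ and a strict subsolution for $\Lambda<1$, on the set where $f_1>0$. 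Note $g_\Lambda(0)=\Lambda A_1$, $g_\Lambda'(0)=0$, and $g_\Lambda>0$ on $[0,\Lambda^{(m-1)/2}\xi_{\rm max}(A_1))$. I would argue by contradiction: if the statement fails, then since $f_1(0)=A_1<A_2=f_2(0)$ there is a \emph{first touching point} $\xi_0\in(0,\Xi)$ with $f_1<f_2$ on $(0,\xi_0)$ and $f_1(\xi_0)=f_2(\xi_0)=:c>0$; in particular $f_2>0$ on $[0,\xi_0]$ and, since $\xi_0<\Xi\le\xi_{\rm max}(A_1)$, every $g_\Lambda$ with $\Lambda\ge1$ is defined and positive on $[0,\xi_0]$.

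\emph{Comparison at the origin via the expansions.} This is where Lemmas \ref{lem.exp1}--\ref{lem.exp2} come in. I would pick the critical value $\Lambda_0:=A_2/A_1>1$, so that $g_{\Lambda_0}(0)=A_2=f_2(0)$ and $g_{\Lambda_0}'(0)=0=f_2'(0)$. The porous‑medium profile equation \eqref{PMEODE} is invariant under the rescaling (with the same initial conditions), whence $\phi(\xi;A)=A\,\phi(A^{-(m-1)/2}\xi;1)$, and consequently $g_{\Lambda_0}^m$ and $f_2^m$ carry \emph{the same} Taylor coefficients as $\phi^m(\cdot;A_2)$ up to the order appearing in \eqref{exp.smallF1}--\eqref{exp.smallF2}; by Lemma \ref{lem.exp2} they first differ in the coefficient of $\xi^{\sigma+2}$, which in $W:=g_{\Lambda_0}^m-f_2^m$ equals $\frac{\Lambda_0^{\,1-\sigma(m-1)/2}A_1^p-A_2^p}{(\sigma+2)(\sigma+N)}=\frac{A_2^{\,1-\sigma(m-1)/2}}{(\sigma+2)(\sigma+N)}\bigl(A_1^{\,p-1+\sigma(m-1)/2}-A_2^{\,p-1+\sigma(m-1)/2}\bigr)<0$, using $A_1<A_2$ and $p-1+\sigma(m-1)/2>0$. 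Hence $g_{\Lambda_0}(\xi)<f_2(\xi)$ for $\xi>0$ small.

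\emph{Propagation and conclusion.} Next I would propagate $g_{\Lambda_0}<f_2$ over all of $(0,\xi_0]$. Since both functions are positive there, $W=g_{\Lambda_0}^m-f_2^m$ solves a linear second‑order ODE $W''+P(\xi)W'+Q(\xi)W=\mathcal{L}[g_{\Lambda_0}]$ with continuous coefficients on $(0,\xi_0]$ and \emph{strictly negative} right‑hand side. At any zero $\bar\xi$ of $W$ one has $g_{\Lambda_0}(\bar\xi)=f_2(\bar\xi)$, so the value‑dependent terms $\alpha f$ and $\xi^\sigma f^p$ cancel, while $(g_{\Lambda_0}'-f_2')(\bar\xi)$ is a nonnegative multiple of $W'(\bar\xi)$; substituting into the equation forces $W''(\bar\xi)<0$, and a tangency analysis in the spirit of the sliding method of \cite{FK80, YeYin} (carried out as in \cite{IMS23, ILS24}) rules out $W$ reaching $0$ from below on $(0,\xi_0]$. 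Granting this, $g_{\Lambda_0}(\xi_0)<f_2(\xi_0)=f_1(\xi_0)$; but $\Lambda_0>1$ and $f_1$ decreasing on $(0,\xi_0)$ give $\Lambda_0^{-(m-1)/2}\xi_0<\xi_0$ and hence $g_{\Lambda_0}(\xi_0)=\Lambda_0 f_1(\Lambda_0^{-(m-1)/2}\xi_0)>f_1(\xi_0)$, a contradiction. This yields $f_1\le f_2$ on $(0,\Xi)$, and the inequality is strict everywhere because two solutions of \eqref{SSODE} that touch at a point where they are positive coincide there by Cauchy--Lipschitz, contradicting $A_1\ne A_2$.

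\emph{Main difficulty.} The hard point is the propagation step: the lower‑order term $\alpha f$ in \eqref{SSODE} is anti‑coercive, so a plain maximum principle is unavailable, and what makes the argument close is precisely the \emph{sliding} structure — comparing with super/subsolutions manufactured from the \emph{solution} $f_1$, so that the offending term cancels along any contact, together with the strictly favourable sign $1-\Lambda_0^{\,p-1+\sigma(m-1)/2}<0$. The second essential ingredient is the sharp local behaviour at the degenerate point $\xi=0$: it is only through Lemmas \ref{lem.exp1}--\ref{lem.exp2} that $g_{\Lambda_0}$ and $f_2$ can be compared at the critical parameter $\Lambda_0$, where all lower‑order Taylor coefficients coincide and the decision is made by the $\xi^{\sigma+2}$ term. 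One must finally keep track of domains of definition and positivity, but here this is automatic, since $\Lambda_0>1$ pushes the support of $g_{\Lambda_0}$ beyond that of $f_1$ and $\xi_0<\Xi\le\xi_{\rm max}(A_1)$.
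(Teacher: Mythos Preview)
Your overall strategy and rescaling are exactly the paper's: with $\Lambda=\lambda^{-2/(m-1)}$ your $g_\Lambda$ is the paper's $f_\lambda$, and your exponent $p-1+\sigma(m-1)/2$ is $L/2$. Your computation near $\xi=0$ via Lemmas~\ref{lem.exp1}--\ref{lem.exp2} is also correct and corresponds precisely to the paper's Case~3.

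The gap is the propagation step. You fix $\Lambda_0=A_2/A_1$, show $W:=g_{\Lambda_0}^m-f_2^m<0$ near $0$, and then want to conclude $W<0$ on all of $(0,\xi_0]$. At a first zero $\bar\xi$ of $W$ from below you only have $W'(\bar\xi)\ge0$; your identity gives $W''(\bar\xi)+c(\bar\xi)W'(\bar\xi)<0$ with $c>0$, but this is perfectly compatible with a \emph{transversal} crossing $W'(\bar\xi)>0$, after which $W>0$. Your appeal to a ``tangency analysis'' handles only the case $W'(\bar\xi)=0$ and does not exclude the transversal case; and, as you yourself note, the zero-order term $\alpha f$ has the wrong sign for a plain maximum principle, so there is no direct way to rule it out.

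The paper closes this gap by \emph{actually sliding}: instead of fixing $\Lambda_0$, it uses that $g_\lambda\to\infty$ as $\lambda\to0$ and takes the optimal $\lambda_0=\sup\{\lambda\in(0,1):g_\lambda>g_2\ \text{on}\ [0,\xi_0]\}$. At this $\lambda_0$ one has $g_{\lambda_0}\ge g_2$ with a contact point $\xi_1$; an interior $\xi_1$ is then a \emph{minimum} of $g_{\lambda_0}-g_2$, so both the value and the first derivative vanish there and $(g_{\lambda_0}-g_2)''(\xi_1)\ge0$, which contradicts the strict sign $(g_{\lambda_0}-g_2)''(\xi_1)<0$ forced by the equation. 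The endpoint $\xi_1=\xi_0$ is excluded by the monotonicity of $f_1$ (essentially your final contradiction), and $\xi_1=0$ is excluded by the $\xi^{\sigma+2}$ expansion (your computation). So the ingredients you assembled are the right ones; what is missing is to let the parameter do the work rather than fixing it and attempting a maximum-principle propagation that the equation does not support.
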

\begin{proof}
Let us denote $g_i=f_i^m$, $i=1,2$, hence $g_i$ is a solution to
\begin{equation}\label{ODE2}
g''(\xi)+\frac{N-1}{\xi}g'(\xi)+\alpha g^{1/m}(\xi)+\beta\xi(g^{1/m})'(\xi)-\xi^{\sigma}g^{p/m}(\xi)=0.
\end{equation}
We introduce the following rescaling, that will be useful in the sequel:
\begin{equation}\label{resc}
f_{\lambda}(\xi):=\lambda^{-2/(m-1)}f_1(\lambda\xi), \qquad g_{\lambda}(\xi):=\lambda^{-2m/(m-1)}g_1(\lambda\xi).
\end{equation}
Straightforward calculations then imply that $g_{\lambda}$ is a solution to the differential equation
\begin{equation}\label{ODEresc}
g_{\lambda}''(\xi)+\frac{N-1}{\xi}g_{\lambda}'(\xi)+\alpha g_{\lambda}(\xi)^{1/m}+\beta\xi(g_{\lambda}^{1/m})'(\xi)-\lambda^{L/(m-1)}\xi^{\sigma}g_{\lambda}^{p/m}(\xi)=0.
\end{equation}
where we recall that $L=\sigma(m-1)+2(p-1)>0$. Since $A_1<A_2$, it follows that $f_1(\xi)<f_2(\xi)$ in a right neighborhood of $\xi=0$. Assume for contradiction that there is $\xi_0\in(0,\Xi)$ such that $f_1(\xi_0)=f_2(\xi_0)$, that is, also $g_1(\xi_0)=g_2(\xi_0)$. Let us observe at this point that, if $0<\lambda<\lambda'\leq 1$, then the monotonicity of $g_1$ on $[0,\Xi]$ entails on the one hand that
$$
g_1(\lambda'\xi)<g_1(\lambda\xi), \qquad \xi\in[0,\Xi],
$$
therefore,
\begin{equation*}
\begin{split}
g_{\lambda}(\xi)&=\lambda^{-2m/(m-1)}g_1(\lambda\xi)>(\lambda')^{-2m/(m-1)}g_1(\lambda\xi)\\
&>(\lambda')^{-2m/(m-1)}g_1(\lambda'\xi)=g_{\lambda'}(\xi),
\end{split}
\end{equation*}
for any $\xi\in[0,\Xi]$. On the other hand,
\begin{equation}\label{interm9}
\lim\limits_{\lambda\to0}\min\limits_{[0,\Xi]}g_{\lambda}=\lim\limits_{\lambda\to 0}g_{\lambda}(\Xi)=\lim\limits_{\lambda\to0} \lambda^{-2m/(m-1)} g_1(\lambda\Xi)=\infty,
\end{equation}
thus we can introduce the optimal sliding parameter
\begin{equation}\label{interm10}
\lambda_0:=\sup\{\lambda\in(0,1): g_2(\xi)<g_{\lambda}(\xi), \xi\in[0,\xi_0]\}
\end{equation}
and infer from \eqref{interm9} and the ordering $g_1(\xi)<g_2(\xi)$ for $\xi\in(0,\xi_0)$ that $\lambda_0\in(0,1)$. We further deduce from the definition of $\lambda_0$ that $g_2(\xi)\leq g_{\lambda_0}(\xi)$ for any $\xi\in[0,\xi_0]$ and that there exists some contact point $\xi_1\in[0,\xi_0]$ such that $g_2(\xi_1)=g_{\lambda_0}(\xi_1)$ (otherwise we reach an immediate contradiction with the optimality of $\lambda_0$, since $g_{\lambda_0}-g_2>0$ on the compact set $[0,\xi_0]$). We split the rest of the proof into three cases.

\medskip

\noindent \textbf{Case 1: $\xi_1=\xi_0$}. We then have, owing to the monotonicity of $g_1$ on $[0,\xi_0]$ and the fact that $\lambda_0<1$,
$$
g_1(\xi_0)=g_2(\xi_0)=g_{\lambda_0}(\xi_0)=\lambda_0^{-2m/(m-1)}g_1(\lambda_0\xi_0)<g_1(\lambda_0\xi_0)<g_1(\xi_0),
$$
which is a contradiction.

\medskip

\noindent \textbf{Case 2: $\xi_1\in(0,\xi_0)$}. In this case, we have
$$
g_2(\xi_1)=g_{\lambda_0}(\xi_1), \qquad g_2'(\xi_1)=g_{\lambda_0}'(\xi_1), \qquad g_2''(\xi_1)\leq g_{\lambda_0}''(\xi_1).
$$
Introducing the previous relations into both \eqref{ODE2} solved by $g_2$ and \eqref{ODEresc} solved by $g_{\lambda_0}$ and subtracting these equalities, we obtain
$$
g_{\lambda_0}''(\xi_1)-g_2''(\xi_1)=\xi_1^{\sigma}(\lambda_0^{L/(m-1)}-1)g_2(\xi_1)<0,
$$
which is a contradiction with the fact that $g_2''(\xi_1)\leq g_{\lambda_0}''(\xi_1)$.

\medskip

\noindent \textbf{Case 3: $\xi_1=0$}. It then follows, on the one hand, that $g_2(0)=g_{\lambda_0}(0)=A_2^m$, and we infer from \eqref{resc} that
\begin{equation}\label{interm11}
A_2=A_1\lambda_0^{-2/(m-1)}.
\end{equation}
On the other hand, we know that $g_2(\xi)<g_{\lambda_0}(\xi)$ for any $\xi\in(0,\xi_0)$. Taking into account the expansions as $\xi\to0$ given in Lemma \ref{lem.exp2} for, respectively, the functions $g_2(\xi)$ and $g_{\lambda_0}(\xi)$, we readily observe that the terms depending only on the expansion coming from the solution to \eqref{PMEODE} are mapped one into the other in view of the fact that the porous medium equation is invariant to the rescaling \eqref{resc}, and the first order where a difference is seen is the first one involving $\sigma$, exposed in Lemma \ref{lem.exp2}. We thus have, after also employing \eqref{interm11}, as $\xi\to0$,
\begin{equation*}
\begin{split}
g_2(\xi)-g_{\lambda_0}(\xi)&=\frac{A_2^p}{(\sigma+2)(\sigma+N)}\xi^{\sigma+2}-
\frac{A_1^p\lambda_0^{-2m/(m-1)}}{(\sigma+2)(\sigma+N)}(\lambda_0\xi)^{\sigma+2}+o(\xi^{\sigma+2})\\
&=\frac{A_2^p}{(\sigma+2)(\sigma+N)}\left[1-\lambda_0^{L/(m-1)}\right]\xi^{\sigma+2}+o(\xi^{\sigma+2})>0,
\end{split}
\end{equation*}
since $L/(m-1)>0$ and $\lambda_0\in(0,1)$. But the latter is a contradiction with the fact that $g_2(\xi)<g_{\lambda_0}(\xi)$ in a right neighborhood of the origin. Reaching a contradiction also in this last case completes the proof.
\end{proof}

\section{Proofs of the main results}\label{sec.thm}

We are now ready to proceed with the proofs of Theorems \ref{th.gen}, \ref{th.large} and \ref{th.small}. Recalling the local analysis performed in Section \ref{sec.transf}, and specially Lemmas \ref{lem.P1}, \ref{lem.P2}, \ref{lem.Pg} and \ref{lem.Q1}, the proofs are based on a shooting method from the critical point $Q_1$ on the trajectories $l_C$ defined in \eqref{lC} for $C\in(0,\infty)$. We recall here that these trajectories are in a one-to-one and onto correspondence with profiles $f(\cdot;A)$ with $A\in(0,\infty)$, the correspondence being given in \eqref{bij}. Thus, we will be looking for shooting parameters $C\in(0,\infty)$ such that $l_C$ either connects to the critical point $P_{\gamma_0}$ leading to the decay \eqref{decay.Pg} as $\xi\to\infty$, or to the critical point $P_1$ leading to the decay \eqref{decay.P1}, or finally to the critical point $P_2$ leading to compactly supported profiles (and very singular solutions, as explained in the Introduction). For the easiness of the reading, we split the proofs into several subsections.

\subsection{Proof of Theorems \ref{th.gen} and \ref{th.large} for $p\geq p_F(\sigma)$: existence}\label{sec.gen}

Let us fix throughout this section $p\geq p_F(\sigma)$. We need one more preparatory lemma, before proceeding with the proof.
\begin{lemma}\label{lem.barlarge}
Let $p\geq p_F(\sigma)$ and $(X,Y,Z)(\eta)$ be a trajectory of the system \eqref{PSsyst1} such that there is $\eta_*\in\real$ with $X(\eta_*)>0$, $Z(\eta_*)>0$ and
\begin{equation}\label{strip}
-\frac{\beta}{\alpha}<Y(\eta)<0, \qquad {\rm for \ any} \ \eta\in(\eta_*,\infty).
\end{equation}
Then the trajectory ends either at the critical point $P_1$ or at the critical point $P_{\gamma}$. Moreover, the plane $\{Y=-\beta/\alpha\}$ cannot be crossed towards the negative side by trajectories of the system \eqref{PSsyst1}.
\end{lemma}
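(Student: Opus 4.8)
The plan is to prove the two assertions separately, doing first the quick ``no downward crossing'' statement. For that I would evaluate the second equation of \eqref{PSsyst1} on the plane $\{Y=-\beta/\alpha\}$: there $Y=-\frac{p-m}{\sigma+2}$, so $-Y^{2}-\frac{p-m}{\sigma+2}Y=-Y\bigl(Y+\frac{p-m}{\sigma+2}\bigr)=0$, leaving
$$
\dot Y=-X-NXY+XZ=X\left(\frac{N(p-m)-(\sigma+2)}{\sigma+2}+Z\right)=X\left(\frac{N\beta-\alpha}{\alpha}+Z\right).
$$
Since $p\geq p_{F}(\sigma)$ is equivalent to $N(p-m)\geq\sigma+2$, i.e. to $N\beta-\alpha\geq0$, and $X\geq0$, $Z\geq0$, this forces $\dot Y\geq0$ on $\{Y=-\beta/\alpha\}$; hence no trajectory of \eqref{PSsyst1} can cross that plane towards $\{Y<-\beta/\alpha\}$.

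For the first assertion, fix a trajectory as in the statement. Invariance of $\{X=0\}$ and $\{Z=0\}$ gives $X(\eta),Z(\eta)>0$ for all $\eta>\eta_{*}$, and with $Y$ confined to $(-\beta/\alpha,0)$ the right-hand side of \eqref{PSsyst1} grows at most linearly in $Z$, so the trajectory cannot blow up in finite time and is defined on all of $(\eta_{*},\infty)$. Because $Y<0$ and $X>0$, the first equation gives $\dot X=X[(m-1)Y-2X]<-2X^{2}<0$, so $X$ decreases strictly to some $X_{\infty}\geq0$; if $X_{\infty}>0$, then $\frac{d}{d\eta}\ln X=(m-1)Y-2X\leq-2X_{\infty}<0$ eventually and $\ln X\to-\infty$, a contradiction, so $X(\eta)\to0$. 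Next I would show $Z$ is bounded: if $Z\to\infty$ along a subsequence then, since $X\to0$ and $Y$ is bounded, the image trajectory of the dual system \eqref{PSsyst2} under \eqref{PSchange.inf} satisfies \eqref{loc.Q4} with $y=Y/X<0$, i.e. it would enter $Q_{4}$ from the region $\{x>0,z>0,y<0\}$, contradicting Lemma \ref{lem.Q4} (alternatively, one checks directly that $\limsup_{\eta\to\infty}Y<0$ forces $\dot Z/Z<0$ eventually, while $\limsup_{\eta\to\infty}Y=0$ forces $XZ\to0$ and again $\dot Z/Z\to0$). Thus the trajectory is bounded.

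Its $\omega$-limit set $\mathcal O$ is then a nonempty, compact, connected, invariant subset of $\{X=0,\ -\beta/\alpha\leq Y\leq0,\ Z\geq0\}$. On the invariant plane $\{X=0\}$ the system reduces to the planar, gradient-like flow $\dot Y=-Y\bigl(Y+\frac{\beta}{\alpha}\bigr)$, $\dot Z=(p-1)YZ$: on $-\beta/\alpha<Y<0$ the coordinate $Y$ is strictly decreasing, the equilibria are $P_{1}$, the $P_{\gamma}$ ($\gamma>0$) and $P_{2}$, and every non-constant orbit is a heteroclinic from some $P_{\gamma}$ to $P_{2}$ (together with the two explicit orbits in $\{Z=0\}$ and in $\{Y=-\beta/\alpha\}$ running into $P_{2}$). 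Since $X$ is strictly monotone along the trajectory itself there is no genuine recurrence, and I would conclude that $\mathcal O$ is a single equilibrium; that it is neither a heteroclinic-type set nor a nontrivial segment of the line of equilibria $\{X=0,Y=0\}$ is handled exactly as in the proof of Lemma \ref{lem.region}, through the damped-oscillation estimate attached to \eqref{ode.g} for $g(\xi)=\xi^{\sigma/(p-1)}f(\xi)$ together with the center-manifold descriptions of $P_{1}$ and $P_{\gamma}$ from Lemmas \ref{lem.P1} and \ref{lem.Pg}. Finally, $P_{2}$ is excluded: by Lemma \ref{lem.P2} (cf. also the deduction made in the proof of Lemma \ref{lem.z0}(c)) the stable manifold of the saddle $P_{2}$, restricted to $\{X>0\}$, approaches $P_{2}$ only from the region $\{Y\leq-\beta/\alpha\}$ precisely when $N\beta-\alpha\geq0$, which is incompatible with $-\beta/\alpha<Y<0$. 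Therefore $\mathcal O=\{P_{1}\}$ or $\mathcal O=\{P_{\gamma}\}$ for some $\gamma>0$, and by Lemma \ref{lem.Pg} the latter forces $\gamma=\gamma_{0}$.

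The hard part is this last step: excluding $P_{2}$ hinges on exactly the inequality $N\beta-\alpha\geq0$ (equivalently $p\geq p_{F}(\sigma)$) already used for the ``no crossing'' claim, and excluding a ``heteroclinic-cycle'' or ``segment-of-equilibria'' $\omega$-limit is not a matter of soft topology but re-uses the oscillation machinery of Lemma \ref{lem.region}. The remaining steps are routine sign inspection in the systems \eqref{PSsyst1} and \eqref{PSsyst2}.
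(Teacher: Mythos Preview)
Your approach is essentially the paper's: the flow computation on $\{Y=-\beta/\alpha\}$ for the no-crossing claim, then $X\to0$ by monotonicity, boundedness via Lemma~\ref{lem.Q4}, identification of the $\omega$-limit as a single equilibrium using the planar reduction on $\{X=0\}$ together with the oscillation argument of Lemma~\ref{lem.region}, and finally exclusion of $P_2$ through the geometry of its stable manifold.

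There is, however, one genuine gap in your exclusion of $P_2$. The eigenvector $e_1$ of $M(P_2)$ has $Y$-component $(\alpha-N\beta)/(m\beta)$ (the sign in the statement of Lemma~\ref{lem.P2} is a typo; recomputing gives this), so for $p>p_F(\sigma)$ one indeed finds that trajectories on the stable manifold with $X>0$ approach $P_2$ from $\{Y<-\beta/\alpha\}$, exactly as you say and as the proof of Lemma~\ref{lem.z0}(c) uses. But at $p=p_F(\sigma)$ this component vanishes, the tangent plane to the stable manifold sits inside $\{Y=-\beta/\alpha\}$, and the first-order information you invoke decides nothing about the side of approach. The paper closes this case by computing the second-order Taylor expansion of the stable manifold of $P_2$, obtaining
$$
Y=-\frac{\beta}{\alpha}-\frac{N^2}{2mN-N+\sigma+2}\,XZ+o(|(X,Z)|^2),
$$
so that trajectories with $X>0$, $Z>0$ still arrive from $\{Y<-\beta/\alpha\}$. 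Your references to Lemma~\ref{lem.P2} and the proof of Lemma~\ref{lem.z0}(c) do not cover this borderline; you need either this second-order computation or an equivalent argument to finish the case $p=p_F(\sigma)$. The rest of your proposal is correct and matches the paper.
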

\begin{proof}
We infer from the first equation of \eqref{PSsyst1} that $\eta\mapsto X(\eta)$ is decreasing along the trajectory for $\eta>\eta_*$, hence there exists a limit $X_0=\lim\limits_{\eta\to\infty}X(\eta)\geq0$. It follows that the $\omega$-limit set of the trajectory (which is either a critical point or not) lies in the plane $\{X=X_0\}$. If $X_0>0$, the $\omega$-limit set cannot be a critical point (as there is none with $X=X_0$) and, as an orbit, it is itself a trajectory of the system (according to, for example, \cite[Theorem 2, Section 3.2]{Pe}). But this, together with the first equation, gives $(m-1)X_0-2Y=0$ in this $\omega$-limit set, which contradicts the fact that $Y(\eta)<0$ for $\eta\geq\eta_*$. It thus follows that $X_0=0$. Observing that the system \eqref{PSsyst1} reduces, in the invariant plane $\{X=0\}$, to
\begin{equation*}
\left\{\begin{array}{ll}\dot{Y}=-Y^2-\frac{p-m}{\sigma+2}Y,\\
\dot{Z}=(p-1)YZ,\end{array}\right.
\end{equation*}
which has no periodic orbits in the half-plane $\{Y<0\}$ due to the fact that $\dot{Z}=(p-1)YZ<0$, the Poincar\'e-Bendixon theory together with Lemma \ref{lem.Q4} and arguments as in the final part of the proof of Lemma \ref{lem.region} entail that the $\omega$-limit set of our trajectory must be a critical point among $P_1$, $P_{\gamma_0}$ or $P_2$. In the case $p>p_F(\sigma)$, an inspection of the eigenvectors $e_1$ and $e_3$ spanning the stable manifold of the critical point $P_2$ in Lemma \ref{lem.P2} show that the trajectories entering $P_2$ on its stable manifold (except the one belonging to the plane $\{X=0\}$) arrive through the half-space $\{Y<-\beta/\alpha\}$, contradicting \eqref{strip}. In the limit case $p=p_F(\sigma)$, we have to look for the second order of the Taylor expansion of the stable manifold of $P_2$. Indeed, if we set
$$
Y=-\frac{\beta}{\alpha}+aX+bZ+cX^2+dXZ+eZ^2+o(|(X,Z)|^2),
$$
and ask for the flow of the system to be of order $o(|(X,Z)|^2)$ across the previous surface, we find after some calculations $a=b=c=e=0$ and
$$
Y=-\frac{\beta}{\alpha}-\frac{N^2}{2mN-N+\sigma+2}XZ+o(|(X,Z)|^2),
$$
whence also in this case, the orbits entering $P_2$ arrive from the half-space $\{Y<-\beta/\alpha\}$. We infer that $P_2$ cannot be reached from the strip \eqref{strip}, completing the proof of the first statement. The last statement follows from the direction of the flow of the system \eqref{PSsyst1} across the plane $\{Y=-\beta/\alpha\}$ (with normal direction $(0,1,0)$), given by the sign of the expression
\begin{equation}\label{flowstrip}
F(X,Z)=X\left(Z+\frac{N(p-p_F(\sigma))}{\sigma+2}\right)>0.
\end{equation}
\end{proof}
We can now pass to the (rather simultaneous) proofs of \ref{th.gen} and \ref{th.large} in the range $p>p_F(\sigma)$.
\begin{proof}[Proof of Theorem \ref{th.large}: existence]
Let $p\geq p_F(\sigma)$. We split the interval $(0,\infty)$ in the following disjoint sets:
\begin{equation}\label{sets.large}
\begin{split}
&\mathcal{A}:=\{C\in(0,\infty): {\rm there \ is} \ \eta_{1,*}\in\real, \ y(\eta_{1,*})>0 \ {\rm on \ the \ trajectory} \ l_C\},\\
&\mathcal{C}:=\{C\in(0,\infty): y(\eta_1)<0 \ {\rm on} \ l_C \ {\rm for \ any} \ \eta_1\in\real \ {\rm and \ it \ ends \ at} \ P_1\},\\
&\mathcal{B}:=(0,\infty)\setminus(\mathcal{A}\cup\mathcal{C}).
\end{split}
\end{equation}
We readily find that $\mathcal{A}$ is an open set, by definition and continuity with respect to $C$. Similarly, the attracting stability of $P_1$ as established in Lemma \ref{lem.P1} entains that $\mathcal{C}$ is an open set. Since the orbits $l_C$ with $C>0$ go out from $Q_1$ into the region $\{y<0\}$ and the direction of the flow across the plane $\{y=0\}$ is given by the sign of $z-x$, it follows from Lemma \ref{lem.invar} that for any $C\in\mathcal{A}$, the trajectory $l_C$ enters the region $\mathcal{R}$ defined in Lemma \ref{lem.invar} and remains there. On the one hand, Lemma \ref{lem.x0} and the continuity with respect to $C$ then imply that $\mathcal{A}$ is non-empty, and more precisely there is $C^*>0$ such that $(C^*,\infty)\subseteq\mathcal{A}$. On the other hand, the non-emptyness of $\mathcal{C}$ follows from Lemma \ref{lem.z0}, the continuity with respect to $C$ and the stability of $P_1$ if $p>p_F(\sigma)$. In the limit case $p=p_F(\sigma)$, the trajectory $l_0$ enters $P_2$ by Lemma \ref{lem.z0}, but an application of the behavior near a saddle point (see for example \cite[Theorem 2.9]{Shilnikov}) to the saddle $P_2$ leads to the same conclusion as for $C>0$ small, the orbits $l_C$ follow the unstable manifold of $P_2$, which is contained in the $Y$ axis and approaches $P_1$.

We then infer that $\mathcal{B}\neq\emptyset$. Pick $C_0\in\mathcal{B}$. Since $C_0\not\in\mathcal{A}$, we deduce that $l_{C_0}$ is fully contained in the half-space $\{y\leq0\}$, that is, also in $\{Y\leq0\}$ according to \eqref{PSchange.inf}. Then, it has two possibilities:

$\bullet$ either $l_{C_0}$ is tangent to the plane $\{y=0\}$, that is, there is $\eta_{1,*}\in\real$ such that $y(\eta_{1,*})=0$, $\dot{y}(\eta_{1,*})=0$ and $y''(\eta_{1,*})\leq0$. From the system \eqref{PSsyst2} and these conditions, we infer that $x(\eta_{1,*})=z(\eta_{1,*})$ and furthermore, by taking derivatives in the second equation of \eqref{PSsyst2} and taking into account the previous equalities, we have
$$
y''(\eta_{1,*})=\dot{z}(\eta_{1,*})-\dot{x}(\eta_{1,*})=(\sigma+2)z(\eta_{1,*})-2x(\eta_{1,*})=\sigma z(\eta_{1,*})>0,
$$
which is a contradiction with the condition of maximum point $y''(\eta_{1,*})\leq0$. Thus, this case is not possible.

$\bullet$ or $l_{C_0}$ is fully included in the half-space $\{y<0\}$, that is, also in $\{Y<0\}$, and does not connect to $P_1$, as $C_0\not\in\mathcal{C}$. Moreover, Lemma \ref{lem.Q1} together with the fact that $p\geq p_F(\sigma)$ and the linearization \eqref{intermXX} imply that all the orbits $l_C$ with $C>0$ on the unstable manifold of $Q_1$ go out in the region
$$
y(\eta_1)>-\frac{x(\eta_1)}{N}\geq-\frac{\beta x(\eta_1)}{\alpha}, \qquad {\rm that \ is} \qquad Y=\frac{y}{x}>-\frac{\beta}{\alpha},
$$
entering the strip \eqref{strip}. Lemma \ref{lem.barlarge} and the fact that $C_0\not\in\mathcal{C}$ give that $l_{C_0}$ has to end up on the center-stable manifold of the critical point $P_{\gamma_0}$. The local behaviors \eqref{decay.Pg}, \eqref{decay.P1} and Lemma \ref{lem.monot} ordering decreasing profiles (corresponding to trajectories fully contained in the region $\{Y<0\}$) show that $\mathcal{A}=(C^*,\infty)$, $\mathcal{C}=(0,C_*)$ and $\mathcal{B}=[C_*,C^*]$, for some $0<C_*\leq C^*<\infty$ (eventually relabeled), and the similar classification for profiles $f(\cdot;A)$ follows from \eqref{bij}, completing the proof of both Theorems \ref{th.gen} and \ref{th.large}.
\end{proof}
We plot in Figure \ref{fig1} below the typical behavior of several trajectories $l_C$ with $C>0$, for several $C\in\mathcal{A}$ and several $C\in\mathcal{C}$. We have also plotted the plane $\{y=-(\sigma+2)/(p-m)\}$ and we see how all trajectories either go directly towards it, or cross it and then go back, in order to reach the critical point $P_1$, according to \eqref{P1xyz} below. This will be in strong contrast with the range $m<p<p_F(\sigma)$ considered in the next section.

\begin{figure}[ht!]
  \begin{center}
  \includegraphics[width=11cm,height=7.5cm]{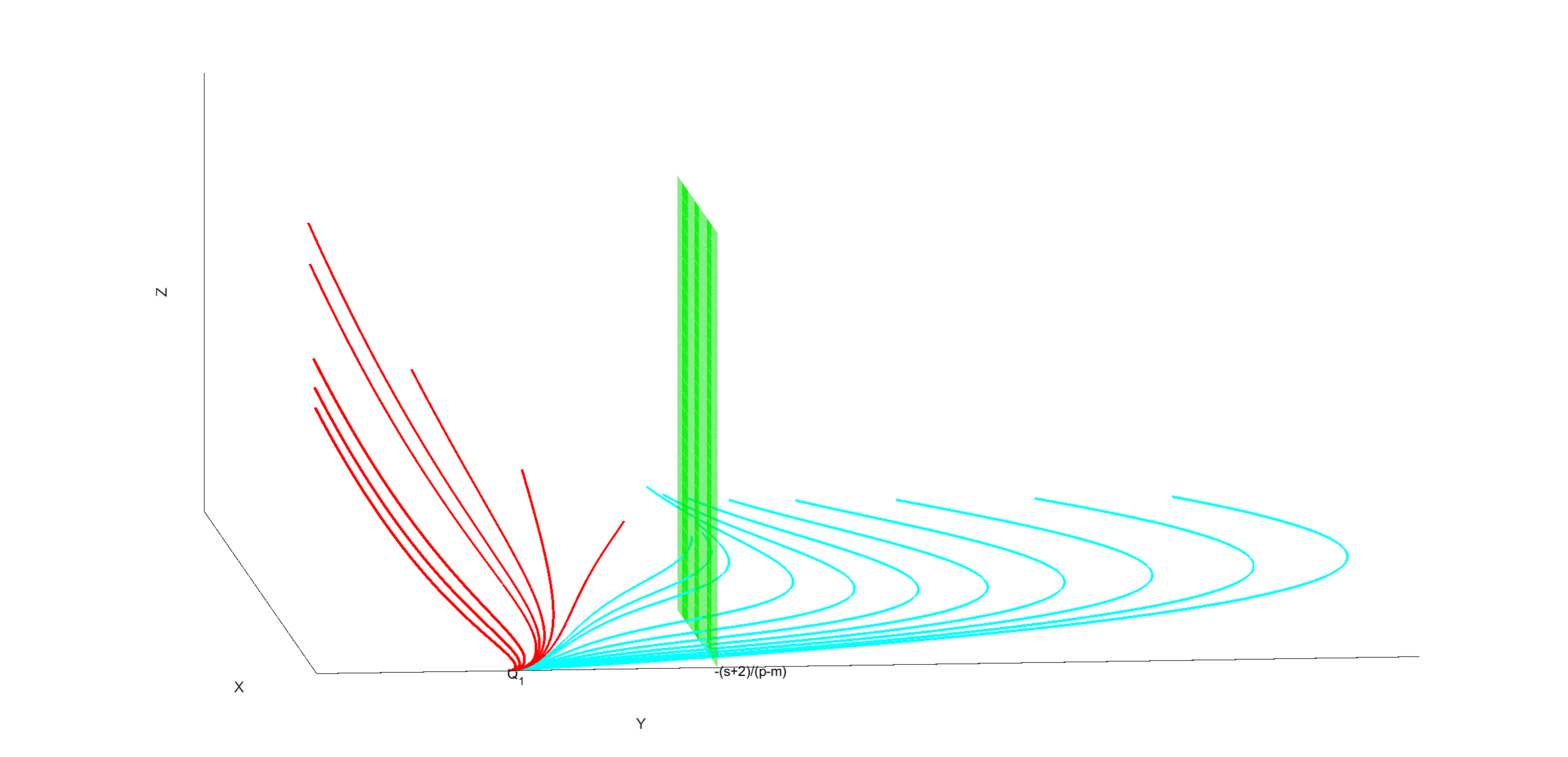}
  \end{center}
  \caption{Trajectories $l_C$ going out of $Q_1$. Numerical experiment for $m=3$, $N=3$, $\sigma=2.5$ and $p=5$.}\label{fig1}
\end{figure}

\subsection{Proof of Theorems \ref{th.gen} and \ref{th.small} for $m<p<p_F(\sigma)$: existence}\label{sec.small}

Let us fix $p\in(m,p_F(\sigma))$ throughout this section. We consider the plane $\{y=-(\sigma+2)/(p-m)\}$, which is a kind of ``filter" for the trajectories of the system \eqref{PSsyst2}. We thus prove a preparatory lemma.
\begin{lemma}\label{lem.plane}
A trajectory $(x,y,z)(\eta_1)$ of the system \eqref{PSsyst2} crossing the plane $\{y=-(\sigma+2)/(p-m)\}$ and entering the half-space $\{y<-(\sigma+2)/(p-m)\}$ cannot return afterwards to $\{y>-(\sigma+2)/(p-m)\}$. Moreover, there are no trajectories $(x,y,z)(\eta_1)$ of the system \eqref{PSsyst2} tangent to the plane $\{y=-(\sigma+2)/(p-m)\}$.
\end{lemma}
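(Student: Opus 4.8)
The plan is to determine the direction of the flow of \eqref{PSsyst2} across the plane $\Pi:=\{y=-(\sigma+2)/(p-m)\}$ and to exploit the fact that $\Pi$ contains an invariant line. Write $y_*:=-(\sigma+2)/(p-m)$. The first step is to evaluate $\dot{y}$ on $\Pi$: substituting $y=y_*$ into the second equation of \eqref{PSsyst2}, the terms $-x$ and $-\frac{p-m}{\sigma+2}xy$ cancel exactly (this cancellation is precisely what singles out $y_*$), so that $\dot{y}|_{y=y_*}=z-Z_0$, with $Z_0$ the constant from \eqref{Z0}. I would also record two elementary facts: that $Z_0>0$ throughout the range $m<p<p_F(\sigma)$ (since $p<p_F(\sigma)$ implies $p<m(N+\sigma)/(N-2)$ when $N\ge3$, and $Z_0>0$ is automatic for $N\le2$), and that the line $\ell:=\{y=y_*,\ z=Z_0\}$ is a trajectory of \eqref{PSsyst2} — indeed on $\ell$ both $\dot{y}=Z_0-Z_0=0$ and $\dot{z}=z(\sigma+2+(p-m)y_*)=0$, so that only the $x$-component evolves. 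This is exactly the invariant line of the Remark following Lemma~\ref{lem.Q2Q3}, associated with the profile \eqref{stat.sol}.

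For the tangency statement I would argue by contradiction. If a trajectory touches $\Pi$ at some $\eta_{1}^{*}$ with $\dot{y}(\eta_1^{*})=0$, then the flow formula forces $z(\eta_1^{*})=Z_0$, hence the trajectory passes through a point of the invariant line $\ell$; by uniqueness for \eqref{PSsyst2} it must coincide with $\ell$, so $y\equiv y_*$, i.e.\ the trajectory is \emph{contained} in $\Pi$ rather than tangent to it — a contradiction. This disposes of the second assertion.

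For the first assertion the key observation is that $\Pi$ is \emph{not} a transversal wall in the usual sense, since the coefficient $z-Z_0$ is sign-indefinite; one must also use the monotonicity of $z$. Suppose a trajectory crosses $\Pi$ into $\{y<y_*\}$ at $\eta_1^{*}$; then $\dot{y}(\eta_1^{*})\le0$, and $\dot{y}(\eta_1^{*})\ne0$ by the tangency part, so $\dot{y}(\eta_1^{*})<0$, i.e.\ $z(\eta_1^{*})<Z_0$. As long as the trajectory stays in $\{y<y_*\}$ one has $\sigma+2+(p-m)y<\sigma+2+(p-m)y_*=0$, hence $\dot{z}=z(\sigma+2+(p-m)y)<0$, using $z>0$ (valid by invariance of $\{z=0\}$; the degenerate case $z\equiv0$ is immediate since then $\dot{y}|_{\Pi}=-Z_0<0$). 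Therefore $z$ remains strictly below $Z_0$ for all later times. If the trajectory ever returned to $\{y>y_*\}$, there would be a first time $\eta_1^{**}>\eta_1^{*}$ with $y(\eta_1^{**})=y_*$ and $\dot{y}(\eta_1^{**})\ge0$; but $\dot{y}(\eta_1^{**})=z(\eta_1^{**})-Z_0<0$, a contradiction. Hence the trajectory stays in $\{y<y_*\}$ from then on. The main obstacle is conceptual: recognizing that the naive ``constant sign across the wall'' argument fails, and that the correct mechanism blocking the return is the coupling of $z<Z_0$ at the entry point with the strict decrease of $z$ inside $\{y<y_*\}$ — the invariant line $\ell$ being also the reason why a direct Taylor expansion at a putative tangency point degenerates to all orders, which is what makes the short uniqueness argument the efficient route.
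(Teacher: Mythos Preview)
Your proposal is correct and follows essentially the same route as the paper: compute $\dot{y}|_{\Pi}=z-Z_0$, use uniqueness against the invariant line $\{y=y_*,\,z=Z_0\}$ to exclude tangency, and observe that crossing into $\{y<y_*\}$ forces $z<Z_0$ at the entry point, after which the third equation makes $z$ strictly decreasing so no return is possible. The only cosmetic differences are that you prove the tangency statement first and invoke it to get the strict inequality $z(\eta_1^*)<Z_0$, whereas the paper treats the two statements in the opposite order; your handling of the boundary case $z\equiv0$ and the remark on $Z_0>0$ for $m<p<p_F(\sigma)$ are small additions absent from the paper but harmless.
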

\begin{proof}
The flow of the system \eqref{PSsyst2} across the plane $\{y=-(\sigma+2)/(p-m)\}$ (with normal $(0,1,0)$) is given by the sign of the expression
$F(z)=z-Z_0$, where $Z_0$ has been introduced in \eqref{Z0}, hence, if a trajectory crosses the plane coming from the half-space $\{y>-(\sigma+2)/(p-m)\}$, it is through a point with $z<Z_0$. But we infer from the third equation of the system \eqref{PSsyst2} that in the half-space $\{y<-(\sigma+2)/(p-m)\}$ the $z$ coordinate is decreasing, thus it cannot increase again up to a value $z>Z_0$ in order to return to the half-space $\{y>-(\sigma+2)/(p-m)\}$. For the second statement, assume for contradiction that there is a trajectory $(x,y,z)(\eta_1)$ and some $\eta_{1,*}\in\real$ such that
$$
y(\eta_{1,*})=-\frac{\sigma+2}{p-m}, \qquad \dot{y}(\eta_{1,*})=0, \qquad y''(\eta_{1,*})\geq0.
$$
It then follows that $z(\eta_{1,*})=Z_0$ and thus, by the uniqueness theorem, we infer that the trajectory has to coincide with the line $z=Z_0$ fully included in the plane $\{y=-(\sigma+2)/(p-m)\}$, leading to a contradiction.
\end{proof}
Let us recall now that the critical point $P_1$, according to \eqref{cmf} and \eqref{centerP1}, together with \eqref{PSchange.inf}, is seen in $(x,y,z)$ variables as the limit
\begin{equation}\label{P1xyz}
x(\eta_1)\to\infty, \quad y(\eta_1)\to-\frac{\sigma+2}{p-m}, \quad {\rm as} \ \eta_1\to\infty.
\end{equation}
With these preparations, we are in a position to prove the existence part of Theorem \ref{th.small}.
\begin{proof}[Proof of Theorem \ref{th.small}: existence]
Let $m<p<p_F(\sigma)$. In a first step we introduce the disjoint sets
\begin{equation}\label{sets.small}
\begin{split}
&\mathcal{A}:=\{C\in(0,\infty): {\rm there \ is} \ \eta_{1,*}\in\real, \ y(\eta_{1,*})>0 \ {\rm on \ the \ trajectory} \ l_C\},\\
&\mathcal{C}:=\left\{C\in(0,\infty): y(\eta_1)<0 \ {\rm on} \ l_C \ {\rm for \ any} \ \eta_1\in\real \ {\rm and} \ \inf\limits_{\eta_1\in\real}y(\eta_1)\leq-\frac{\sigma+2}{p-m}\right\},\\
&\mathcal{B}:=(0,\infty)\setminus(\mathcal{A}\cup\mathcal{C}).
\end{split}
\end{equation}
Similarly as in the previous section, Lemmas \ref{lem.invar} and \ref{lem.x0} imply that there is $C^*>0$ such that $(C^*,\infty)\subseteq\mathcal{A}$. Let us now look at the set $\mathcal{C}$. According to Lemma \ref{lem.plane} and to \eqref{P1xyz}, for $C\in\mathcal{C}$, the trajectory $l_C$ has only two possibilities:

$\bullet$ either $l_C$ crosses the plane $\{y=-(\sigma+2)/(p-m)\}$ and then remains in that region forever. The set of parameters $C$ for which this occurs is an open set by definition.

$\bullet$ or $l_C$ stays in the half-space $\{y=-(\sigma+2)/(p-m)\}$ and enters the critical point $P_1$ in the limit $\eta_1\to\infty$, according to \eqref{P1xyz}. This is also an open set, according to the stability of $P_1$.

Altogether, $\mathcal{C}$ is an open set. Moreover, Lemma \ref{lem.z0} and the continuity with respect to $C$ give that there is $C_*>0$ such that $(0,C_*)\subseteq\mathcal{C}$. This also shows that $\mathcal{B}$ is non-empty and closed. Picking $C_0\in\mathcal{B}$, we infer from the fact that $C_0\not\in\mathcal{A}$, $C_0\not\in\mathcal{C}$ and the fact that the trajectory $l_{C_0}$ cannot be tangent to any of the planes $\{y=0\}$ or $\{y=-(\sigma+2)/(p-m)\}$ (as shown in the previous section, respectively in Lemma \ref{lem.plane}) that $l_{C_0}$ satisfies the condition \eqref{cond.region} and does not enter $P_1$. Lemma \ref{lem.region} then entails that $l_{C_0}$ connects to the critical point $P_{\gamma_0}$. The monotonicity of the decreasing profiles given in Lemma \ref{lem.monot} proves that $\mathcal{B}$ is a closed interval and completes the proof of Theorem \ref{th.gen} in this range.

\medskip

In a second step, we are left to show that there is some $C\in(0,\infty)$ such that $l_C$ connects to $P_2$ (corresponding then to the profile of the very singular, compactly supported self-similar solution as stated in Theorem \ref{th.small}). To this end, we split now the open set $\mathcal{C}$ defined in \eqref{sets.small}, at its turn, into three disjoint sets
\begin{equation}\label{sets.small2}
\begin{split}
&\mathcal{U}:=\{C\in\mathcal{C}: {\rm the \ trajectory} \ l_C \ {\rm connects \ to} \ Q_5\},\\
&\mathcal{V}:=\{C\in\mathcal{C}: {\rm the \ trajectory} \ l_C \ {\rm connects \ to} \ P_1\},\\
&\mathcal{W}:=\mathcal{C}\setminus(\mathcal{U}\cup\mathcal{V}).
\end{split}
\end{equation}
Once more, the non-emptyness of $\mathcal{U}$ follows from Lemma \ref{lem.z0}, while the stability of both $Q_5$ and $P_1$ shows that $\mathcal{U}$ and $\mathcal{V}$ are open sets. It remains to prove that $\mathcal{V}$ is non-empty. In order to show it, let
$$
\mathcal{S}:=\left\{C\in\mathcal{C}: {\rm the \ trajectory} \ l_C \ {\rm crosses \ the \ plane} \ y=-\frac{\sigma+2}{p-m}\right\}.
$$
It is obvious that $\mathcal{S}$ is an open set. Pick $C_0=\sup\mathcal{S}\in(0,\infty)$. Since $\mathcal{S}$ is open, we deduce that $C_0\not\in\mathcal{S}$, that is, the trajectory $l_{C_0}$ remains forever in the half-space $\{y>-(\sigma+2)/(p-m)\}$, since Lemma \ref{lem.plane} shows that there cannot be $\eta_1\in\real$ such that $y(\eta_1)=-(\sigma+2)/(p-m)$ without crossing the plane immediately after. Moreover, by the definition of supremum, there is a sequence $(C_{n})_{n\geq1}$ such that $C_n\in\mathcal{S}$, $C_n<C_0$ for every $n\geq1$ and $C_n\to C_0$ as $n\to\infty$. By the definition of $\mathcal{S}$, for every $n\geq1$ there is $\eta_{1,n}\in\real$ such that the point
$$
\left(x(\eta_{1,n}),-\frac{\sigma+2}{p-m},z(\eta_{1,n})\right) \ {\rm belongs \ to} \ l_{C_n}, \quad 0<z(\eta_{1,n})<Z_0,
$$
and is the crossing point between $l_{C_n}$ and the plane $\{y=-(\sigma+2)/(p-m)\}$. Assume for contradiction that $(x(\eta_{1,n}))_{n\geq1}$ is bounded. Since $(z(\eta_{1,n}))_{n\geq1}$ is also bounded, by extracting a subsequence (relabeled also $\eta_{1,n}$), we may assume that both $x(\eta_{1,n})$ and $z(\eta_{1,n})$ are convergent as $n\to\infty$ to some limits $x_{\infty}$ and $z_{\infty}$. By continuity with respect to $C$, we conclude that the point
$$
P_{\infty}:=\left(x_{\infty},-\frac{\sigma+2}{p-m},z_{\infty}\right)
$$
either belongs to $l_{C_0}$ or is the limit as $\eta_{1}\to\infty$ of $l_{C_0}$, in both cases reaching a contradiction. Indeed, the former contradicts Lemma \ref{lem.plane}, while the latter would imply that $P_{\infty}$ is a finite critical point for the system \eqref{PSsyst2}, and there is no such point. Observe that in fact, this contradiction shows that $(x_{\eta_{1,n}})_{n\geq1}$ has no convergent subsequences. Thus, $x(\eta_{1,n})\to\infty$ as $n\to\infty$, and the continuity with respect to $C$, \eqref{P1xyz}, and the stability of $P_1$ then give that $l_{C_0}$ connects to $P_1$, that is, $C_0\in\mathcal{V}$.

\medskip

We thus conclude that $\mathcal{U}$ and $\mathcal{V}$ are non-empty and open, and thus $\mathcal{W}$ is non-empty (and closed). Pick now $C_1\in\mathcal{W}$. In particular, since $C_1\in\mathcal{C}$ but $C_1\not\in\mathcal{V}$, it follows from Lemma \ref{lem.region} that the trajectory $l_{C_1}$ crosses the plane $\{y=-(\sigma+2)/(p-m)\}$. We further infer from Lemma \ref{lem.plane} that there exists $\eta_{1,*}\in\real$ such that at points $(x,y,z)(\eta_1)$ belonging to $l_{C_1}$ with $\eta_1>\eta_{1,*}$, we have $y(\eta_1)<-(\sigma+2)/(p-m)$. We deduce then from the third equation of the system \eqref{PSsyst2} that $z(\eta_1)<Z_0$ for any $\eta_1>\eta_{1,*}$ and $z(\eta_1)$ decreases on $(\eta_{1,*},\infty)$. Since $\eta_1\mapsto x(\eta_1)$ is increasing, it is easy to show by an argument by contradiction that $x(\eta_1)\to\infty$ as $\eta_1\to\infty$. Moreover, since $C_1\not\in\mathcal{U}$, owing to the fact that $Q_5$ is an attractor characterized by the limit $y/x\to-\infty$, we deduce that the function $\eta_1\mapsto y(\eta_1)/x(\eta_1)$, $\eta_1\in(\eta_{1,*},\infty)$, is uniformly bounded from below by some negative constant $-\kappa$ with $\kappa>0$ sufficiently large.

We change now the viewpoint of the trajectory $l_{C_1}$ and move to the $(X,Y,Z)$ variables by undoing \eqref{PSchange.inf}. The previous arguments imply that, on the trajectory $l_{C_1}$, we have
\begin{equation}\label{interm12}
X=\frac{1}{x}\to0, \quad Z=\frac{z}{x}\to0, \quad Y=\frac{y}{x}\in(-\kappa,0),
\end{equation}
as $\eta\to\infty$. If we assume for contradiction that $Y(\eta)$ is not convergent, but bounded, it follows that it has infinitely many oscillations, so that, there are sequences $(\eta_{k})_{k\geq1}$ and $(\eta_{j})_{j\geq1}$ of respectively minima and maxima of $Y(\eta)$, such that $\eta_{j}\to\infty$ and $\eta_{k}\to\infty$. Since $\dot{Y}(\eta_{j})=\dot{Y}(\eta_{k})=0$, we infer from the second equation in the system \eqref{PSsyst1} evaluated at $\eta_{k}$, respectively $\eta_{j}$, and \eqref{interm12} that
$$
-Y^2(\eta_j)-\frac{\beta}{\alpha}Y(\eta_j)\to0, \quad -Y^2(\eta_k)-\frac{\beta}{\alpha}Y(\eta_k)\to0,
$$
which together with the boundedness, readily gives that $Y(\eta_j)$ and $Y(\eta_k)$ may have only zero or $-\beta/\alpha$ as limit points. Since zero as limit point is excluded by the asymptotic stability of $P_1$ and the fact that $C_1\not\in\mathcal{V}$, we are left with $Y(\eta_j)\to-\beta/\alpha$ as $j\to\infty$ and $Y(\eta_k)\to-\beta/\alpha$ as $k\to\infty$. We conclude that $l_{C_1}$ ends up at the critical point $P_2$, and the existence is proved.
\end{proof}
We plot in Figure \ref{fig2} a bunch of trajectories for several values of $C\in(0,\infty)$, illustrating by a numerical experiment the previous alternative with several different behaviors and the ``filter" realized by the plane $\{y=-(\sigma+2)/(p-m)\}$.

\begin{figure}[ht!]
  \begin{center}
  \includegraphics[width=11cm,height=7.5cm]{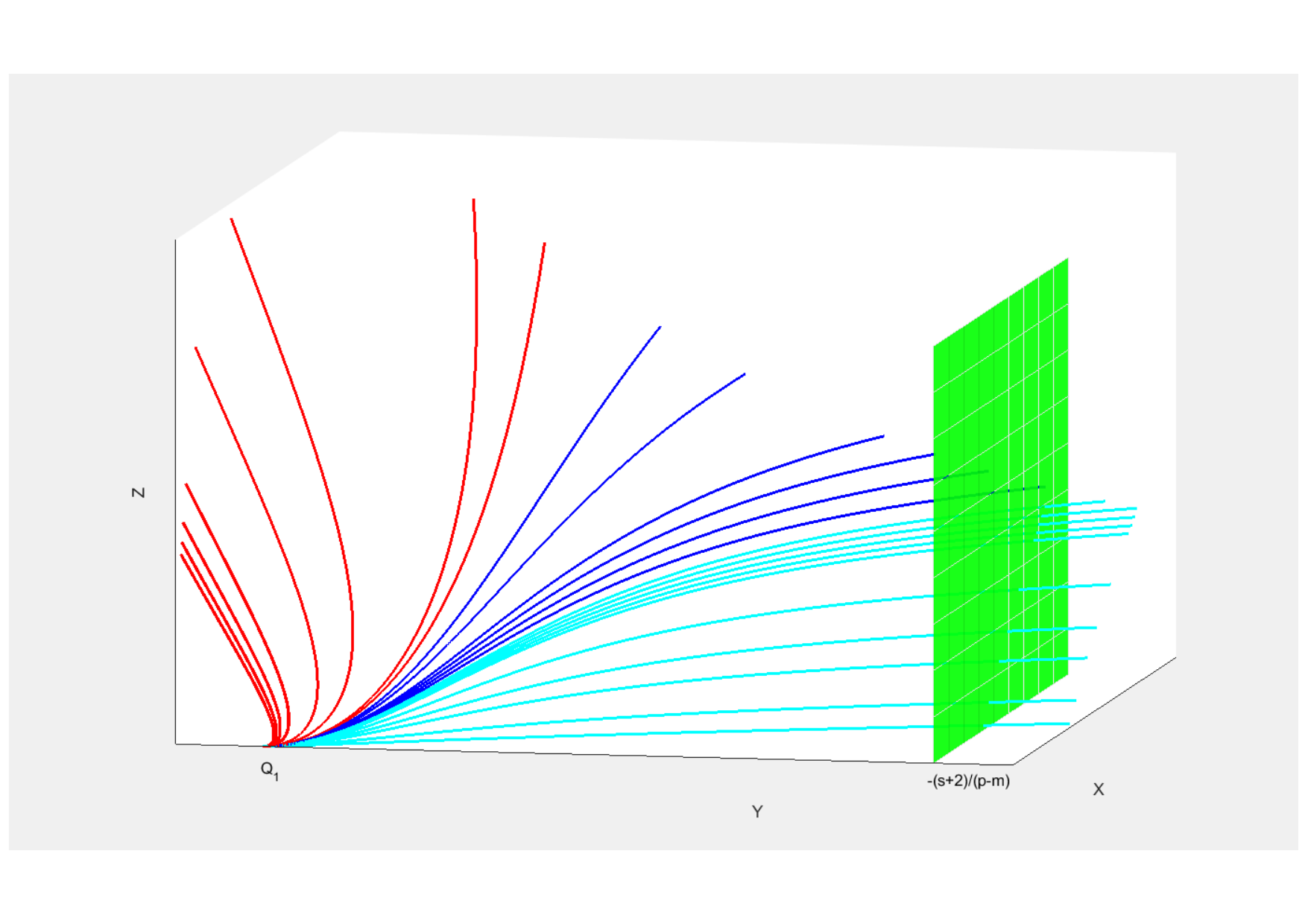}
  \end{center}
  \caption{Trajectories $l_C$ going out of $Q_1$. Numerical experiment for $m=3$, $N=3$, $\sigma=2.5$ and $p=3.4$.}\label{fig2}
\end{figure}

\subsection{Proof of Theorem \ref{th.small}: Uniqueness of the compactly supported profile}

In this section we prove that there exists only one compactly supported profile, as claimed in Theorem \ref{th.small}, part (a). The proof is done directly working with profiles $f(\cdot;A)$ with $A>0$, and follows rather closely the analogous one in \cite[Section 5]{IMS23}. As done there, we begin with a preparatory result.
\begin{lemma}\label{lem.super}
Let $f$ be a solution to \eqref{SSODE} and define
\begin{equation}\label{super}
U_{\lambda}(x,t):=t^{-\alpha}f_{\lambda}(|x|t^{-\beta}), \qquad f_{\lambda}(\xi)=\lambda^{-2/(m-1)}f(\lambda\xi),
\end{equation}
for $\lambda\in(0,1)$, where $\alpha$, $\beta$ are given in \eqref{SSexp}. Then $U_{\lambda}$ is a supersolution to Eq. \eqref{eq1}.
\end{lemma}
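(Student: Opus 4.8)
The plan is to substitute the ansatz \eqref{super} into Eq.~\eqref{eq1} and to exploit the fact that the self-similarity exponents in \eqref{SSexp} are tuned precisely so that all the resulting terms carry the same power of $t$. First I would record the general identity: for any profile $g=g(\xi)$ and $U(x,t)=t^{-\alpha}g(|x|t^{-\beta})$ one has, writing $\xi=|x|t^{-\beta}$,
\[
\partial_t U=-t^{-\alpha-1}\bigl(\alpha g(\xi)+\beta\xi g'(\xi)\bigr),\qquad
\Delta(U^m)=t^{-\alpha m-2\beta}\Bigl[(g^m)''(\xi)+\tfrac{N-1}{\xi}(g^m)'(\xi)\Bigr],
\]
\[
|x|^\sigma U^p=t^{\beta\sigma-\alpha p}\,\xi^\sigma g^p(\xi).
\]
Since $L=\sigma(m-1)+2(p-1)$, a one-line computation from \eqref{SSexp} gives the algebraic identities $\alpha m+2\beta=\alpha+1$ and $\beta\sigma-\alpha p=-\alpha-1$, so all three expressions above share the factor $t^{-\alpha-1}$ and
\[
\partial_t U-\Delta(U^m)+|x|^\sigma U^p
=-t^{-\alpha-1}\Bigl[(g^m)''(\xi)+\tfrac{N-1}{\xi}(g^m)'(\xi)+\alpha g(\xi)+\beta\xi g'(\xi)-\xi^\sigma g^p(\xi)\Bigr].
\]

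Next I would apply this with $g=f_\lambda$. Because $f$ solves \eqref{SSODE}, the same computation as in \eqref{resc}--\eqref{ODEresc} shows that the porous medium part of \eqref{SSODE} is invariant under $f\mapsto f_\lambda$ while the absorption term only picks up the factor $\lambda^{L/(m-1)}$, that is,
\[
(f_\lambda^m)''(\xi)+\tfrac{N-1}{\xi}(f_\lambda^m)'(\xi)+\alpha f_\lambda(\xi)+\beta\xi f_\lambda'(\xi)=\lambda^{L/(m-1)}\,\xi^\sigma f_\lambda^p(\xi).
\]
Hence the bracket in the previous display equals $\bigl(\lambda^{L/(m-1)}-1\bigr)\xi^\sigma f_\lambda^p(\xi)$, and therefore
\[
\partial_t U_\lambda-\Delta(U_\lambda^m)+|x|^\sigma U_\lambda^p=\bigl(1-\lambda^{L/(m-1)}\bigr)\,t^{-\alpha-1}\,\xi^\sigma f_\lambda^p(\xi)\ \geq\ 0,
\]
since $\lambda\in(0,1)$, $L>0$, $t>0$ and $f_\lambda\geq0$. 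This is exactly the assertion that $U_\lambda$ is a supersolution of \eqref{eq1}.

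The computation is essentially pure bookkeeping of the self-similarity exponents, so there is no genuine analytic obstacle; the one structural input — that the rescaling \eqref{resc} rescales only the absorption term, by the positive factor $\lambda^{L/(m-1)}<1$ — has already been isolated in \eqref{ODEresc}. The only point deserving a word is the meaning of ``supersolution'' when $f$, and hence $f_\lambda$, is compactly supported: there $U_\lambda^m$ is $C^1$ but not $C^2$ across the edge of the support, so the differential inequality above is to be read classically on $\{f_\lambda>0\}$ and then in the standard weak sense for degenerate parabolic equations, with $U_\lambda\equiv0$ solving the equation beyond the support and the contact condition $(f_\lambda^m)'=0$ at the interface (which is preserved by \eqref{resc}) making the gluing admissible; I would simply invoke the comparison framework for the porous medium equation with absorption, e.g.\ \cite{VPME}, rather than reprove it here.
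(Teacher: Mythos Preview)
Your proof is correct and follows essentially the same direct computation as the paper: both substitute the self-similar ansatz, use that the exponents \eqref{SSexp} equalize the powers of $t$, and invoke the rescaling identity \eqref{ODEresc} to see that only the absorption term changes, by the factor $\lambda^{L/(m-1)}<1$. Your additional remark on the weak interpretation at the interface is a sensible precaution that the paper simply omits.
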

\begin{proof}
It follows by direct calculation. Indeed, we have
\begin{equation*}
\begin{split}
U_{\lambda,t}-\Delta U_{\lambda}^m-|x|^{\sigma}U_{\lambda}^p&=t^{-\alpha-1}\left[-\alpha f_{\lambda}-\beta\xi f_{\lambda}'-(f_{\lambda}^m)''-\frac{N-1}{\xi}(f_{\lambda}^m)'+\xi^{\sigma}f_{\lambda}^p\right]\\
&=t^{-\alpha-1}\xi^{\sigma}f_{\lambda}^p\left(1-\lambda^{L/(m-1)}\right)>0.
\end{split}
\end{equation*}
\end{proof}
We are now in a position to complete the proof of part (a) of Theorem \ref{th.small}.
\begin{proof}[Proof of Theorem \ref{th.small}, part (a)]
Assume for contradiction that there exist two profiles $f_1=f(\cdot;A_1)$ and $f_2=f(\cdot;A_2)$ with $A_1<A_2$ and with interfaces (in the sense described in the statement of Theorem \ref{th.small}, part (a)) at finite points $\xi_1=\xi_{\rm max}(A_1)$, respectively $\xi_2=\xi_{\rm max}(A_2)\in(0,\infty)$. We infer from Lemma \ref{lem.monot} that $\xi_1<\xi_2$ and $f_1(\xi)<f_2(\xi)$ for any $\xi\in[0,\xi_1]$. Consider now the same rescaling introduced in \eqref{resc} and define $\lambda_0\in(0,1)$ to be the optimal parameter as defined in \eqref{interm10}. We already know from the proof of Lemma \ref{lem.monot} that $g_{\lambda_0}$ and $g_2$, thus also $f_{\lambda_0}$ and $f_2$, remain strictly ordered in $[0,\xi_2)$. The optimality of $\lambda_0$ thus entails that the contact point between $f_{\lambda_0}$ and $f_2$ has to lie at their common edge of the support $\xi=\xi_2$. We thus have
$$
0=f_2(\xi_2)=f_{\lambda_0}(\xi_2), \qquad 0<f_2(\xi)<f_{\lambda_0}(\xi) \ {\rm for \ any} \ \xi\in[0,\xi_2).
$$
We go back to the time variable and construct the self-similar functions
$$
U_2(x,t)=t^{-\alpha}f_2(|x|t^{-\beta}), \qquad U_{\lambda_0}=t^{-\alpha}f_{\lambda_0}(|x|t^{-\beta}),
$$
thus $U_2$ is a solution to Eq. \eqref{eq1} and $U_{\lambda_0}$ is a supersolution to Eq. \eqref{eq1} by Lemma \ref{lem.super}. We next apply an idea of \emph{separation of supports} similar to the one used in \cite{IMS23}. More precisely, since at $t=1$ we have a perfect identification between the function and its profile, we notice that
$$
U_2(x,1)\leq U_{\lambda_0}(x,1)<U_{\lambda_0}(x,1+\delta),
$$
provided we take a small $\delta>0$. Indeed, contrasting with the proof in \cite[Section 5]{IMS23}, in our case a small time advance means a small advance of the support but a small decrease in amplitude. Thus, we have to ensure that we still keep the order at zero (since a tangency at an interior point cannot appear, in the same way as shown in the proof of Lemma \ref{lem.monot}), that is,
$$
U_2(0,1)=A_2<(1+\delta)^{-\alpha}\lambda_0^{-2/(m-1)}A_1=U_{\lambda_0}(0,1+\delta),
$$
which allows us to choose a $\delta>0$ sufficiently small, since by the strict ordering we already know that $A_2<\lambda_0^{-2/(m-1)}A_1$. With this choice of $\delta$, we find that $U_2(x,1)$ and $U_{\lambda_0}(x,1+\delta)$ are strictly separated on the compact interval $[0,\xi_2]$. There exists thus, by continuity with respect to $\lambda$, a better parameter $\lambda_1\in(\lambda_0,1)$ such that
$$
U_2(x,1)<\lambda_1^{-2/(m-1)}(1+\delta)^{-\alpha}f_1(\lambda_1|x|(1+\delta)^{-\beta})=U_{\lambda_1}(x,1+\delta).
$$
Since $U_2$ is a solution and $U_{\lambda_1}$ is a supersolution to Eq. \eqref{eq1} (according to Lemma \ref{lem.super}), we deduce that $U_2(x,t)<U_{\lambda_1}(x,t+\delta)$ for any $t>1$. This follows from the comparison principle (which is a standard property for absorption-diffusion equations, and will also be proved for completeness in the companion paper to this work), but since we are dealing with functions having the specific self-similar form, we can actually show that no contact point between $U_2(t)$ and $U_{\lambda_1}(t+\delta)$ may appear at a first later time $t_1>1$ by removing the contact points in the same way as we did in the proof of Lemma \ref{lem.monot}. Recalling the expressions of $U_2(x,t)$ and $U_{\lambda_1}(x,t+\delta)$, we are left with
\begin{equation}\label{interm14}
f_2(\xi)\leq\left(\frac{t+\delta}{t}\right)^{-\alpha}\lambda_1^{-2/(m-1)}f_1\left(\lambda_1\xi\left(\frac{t+\delta}{t}\right)^{-\beta}\right),
\end{equation}
for any $t>1$. By letting $t\to\infty$ in the right hand side of \eqref{interm14}, we deduce that $f_2(\xi)\leq f_{\lambda_1}(\xi)$ for any $\xi\in[0,\xi_2]$, contradicting the optimality of $\lambda_0$ in \eqref{interm10}. This contradiction implies the uniqueness of the compactly supported profile, ending the proof.
\end{proof}

\noindent \textbf{Acknowledgements} R. I. is partially supported by the Spanish project PID2020-115273GB-I00. The authors wish to thank Ariel S\'anchez (Univ. Rey Juan Carlos, Madrid) for interesting comments and support.

\bigskip

\noindent \textbf{Data availability} Our manuscript has no associated data.

\bigskip

\noindent \textbf{Conflict of interest} The authors declare that there is no conflict of interest.

\bibliographystyle{plain}

\end{document}